 \newcommand {\C} {{\mathbb C}}
 \newcommand {\R} {{\mathbb R}}
 \newcommand {\Z} {{\mathbb Z}}
 \newcommand {\Q} {{\mathbb Q}}
 \newcommand {\bA}{{\mathbb A}}
 \newcommand {\PP} {{\mathbb P}}
 \newcommand {\FF} {{\mathbb F}}
  \newcommand {\F} {{\mathcal F}}
  \newcommand {\D} {{\mathcal D}}
  \newcommand {\h} {{\mathcal H}}
 \newcommand {\Ss} {{\mathcal S}}
  \newcommand {\Y} {{\mathcal Y}}
    \newcommand {\CC} {{\mathcal C}}
 \newcommand {\K} {{\mathcal K}}
  \newcommand {\M} {{\mathcal M}}
   \newcommand {\CO} {{\mathcal O}}
    \newcommand {\J} {{\mathcal J}}
   \newcommand {\del} {{\partial}}
 \newcommand {\X} {{\mathscr X}}
  \newcommand {\Div}{\text{\rm div}}
 \newcommand {\cl}{\text{\rm cl}}
 \newcommand {\CH}{\text{CH}} 
  \newcommand {\DIV}{\text{div}}
\newcommand{\alg}{\text{\rm alg}}
\newcommand{\Spec}{\text{\rm Spec}}
\newcommand{\I}{\text{\rm i}}
\newcommand{\ol}{\overline}
\newcommand{\ul}{\underline}
\newcommand{\Ext}{\text{\rm Ext}}
\newcommand{\MHS}{\text{\rm MHS}}
\newcommand{\Zar}{\text{\rm Zar}}
\newcommand{\pr}{\text{\rm pr}}
\newcommand{\bs}{{\backslash}}
\newcommand{\rat}{\text{\rm rat}}
\newcommand{\dgt}{\text{\rm dgt}}
 \newtheorem{thm}[subsection]{Theorem}
 \newtheorem{lemma}[subsection]{Lemma}
 \newtheorem{prop}[subsection]{Proposition}
  \newtheorem{conj}[subsection]{Conjecture}
 \newtheorem{defn}[subsection]{Definition}
 \newtheorem{rmk}[subsection]{Remark}
 \newtheorem{ex}[subsection]{Example}
\begin{document}

\title[ Height Pairing]{The business of Height Pairings}

\author{Souvik Goswami}

\address{632 Central Academic Building\\
University of Alberta\\
Edmonton, Alberta T6G 2G1, CANADA}

\email{souvik@ualberta.ca}

\author{James D. Lewis}

\address{632 Central Academic Building\\
University of Alberta\\
Edmonton, Alberta T6G 2G1, CANADA}

\email{lewisjd@ualberta.ca}

\thanks{Second author partially supported
by a grant from the Natural Sciences and Engineering Research Council 
of Canada}
\subjclass{14C25, 14C30, 14C35}
\keywords{Height pairing, 
Abel-Jacobi map, regulator, Deligne cohomology, Chow group}
\date{\today}

\renewcommand{\abstractname}{Abstract} 
\begin{abstract}  In algebraic geometry there is the notion
of a height pairing of algebraic cycles, which lies at the confluence
of arithmetic, Hodge theory and topology. After explaining
a motivating example situation, we introduce new directions in this subject.
\end{abstract}

\maketitle

\centerline{\em In celebration of Steven M. Zucker's 65th birthday.}

\centerline{\em A true pioneer in Hodge theory!}

 \tableofcontents{}

\section{Introduction}\label{S1}
From topology one has the notion of the local linking  number (or index)
of two curves in 3-space. Basically this determines locally how
many times a given curve winds around another (with orientation).
If one of curves bounds a membrane (we think of that membrane as a \underbar{pre}cycle, in the sense
that it's boundary is not zero), then the sum of these local links
can be interpreted as an intersection pairing.  {Paragraph 2.1. in \cite{Be3} comes
to mind about this.}
The height pairing of two
algebraic cycles is an algebraic generalization of this.  Here is an
example (see \cite{C-L}) of how we view a classical algebraic cycle as bounding
a precycle. Let $X$ be a projective algebraic manifold of dimension $d$
and $\{Z_{\alpha}\}$ a finite collection of irreducible subvarieties
of codimension $r-1$ in $X$. Let $f_{\alpha} \in \C(Z_{\alpha})^{\times}$,
and consider the precycle
\[
\xi_1' := \sum_{\alpha}(f_{\alpha},Z_{\alpha}).
\]
Put
\[
\xi_1 := \sum_{\alpha}\DIV_{Z_{\alpha}}(f_{\alpha}) \in z^r(X),
\]
where $z^r(X)$  are the cycles of codimension $r$ in $X$.
Note that by definition $\xi_1 \in z^r_{\rat}(X)$, the subgroup of cycles in $z^r(X)$ rationally equivalent
to zero.
\underbar{Alternate take}: Let $\square := \PP^1\bs \{1\}$. Then one can
interpret
\[
\xi_1' = \sum_{\alpha}\text{\rm graph}_{Z_{\alpha}\times\square}(f_{\alpha}) \in z^r(X\times\square),
\]
with
\[
\del(\xi_1') = \del_0(\xi'_1) - \del_{\infty}(\xi_1') = \xi_1.
\]
If $\xi_2\in z^{d-r+1}(X)$ is in general position with respect to $\xi_1$ (and $\xi_1'$), then
$|\xi_1|\cap |\xi_2|=\emptyset$; moreover
\[
\sum_{\alpha} \int_{Z_{\alpha}\cap \xi_2} \log |f_{\alpha}| \in \R,
\]
becomes the analog of the total linking index of $\xi_1$ and $\xi_2$. Now suppose
that $\xi_1 = 0$, i.e. $\del \xi_1'=0$. Then the real regulator   of the
``$K^{(r)}_1(X)$'' \underbar{cycle} $\xi_1'$, given by the formula,
\begin{equation}\label{Reg1}
R_{r,1}(\xi_1')\in H^{r-1,r-1}(X,\R) \simeq H^{d-r+1,d-r+1}(X,\R)^{\vee}, 
\end{equation}
\[
\omega\in H^{d-r+1,d-r+1}(X,\R) \mapsto \sum_{\alpha}\int_{Z_{\alpha}}\log|f_{\alpha}|\omega
\in \R,
\]
is well-defined (see \cite{Ja1}, or \cite{KLM} and the references cited there). If $\omega = [\xi_2]$ is algebraic, then
\[
R_{r,1}(\xi_1')(\omega) = \sum_{\alpha}\int_{Z_{\alpha}\cap \xi_2}\log|f_{\alpha}|.
\]
Finally, if $\omega=0$, e.g. $\omega = [\xi_2]$ where $\xi_2\in z^{d-r+1}_{\rat}(X)$,
then $R_{r,1}(\xi_1')(\omega)  = 0$. We deduce:

\begin{prop} We have a pairing
\[
\langle\ ,\  \rangle: z_{\rat}^r(X) \times
z_{\rat}^{d+1-r}(X)\to \R
\]
given by
\[
\langle \xi_1, \xi_2\rangle = R_{r,1}(\xi_1')(\xi_2) = \sum_{\alpha} \int_{Z_{\alpha}\cap \xi_2} \log |f_{\alpha}| \in \R,
\]
where $\xi_1\in z_{\rat}^r(X)$, $\xi_2\in
z_{\rat}^{d+1-r}(X)$ and $\xi_1'= \sum (f_{\alpha},Z_{\alpha})$ is a higher Chow precycle
whose divisor (boundary) is $\xi_1$.
It is easy to see that the pairing is well-defined,
i.e., it is independent of the exact choice of $\xi_1'$, since if
$\DIV(\xi_1' -\xi_1'') = 0$, then 
\[
R_{r,1}(\xi_1' -\xi_1'')(\xi_2) = 0
\]
as $\xi_2 \sim_{\rat} 0$. 
\end{prop}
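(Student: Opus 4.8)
The plan is to manufacture one well-placed representative precycle and then collapse the whole independence statement onto the computation recalled at the end of Section~\ref{S1}.

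First I would build $\xi_1'$. Since $\xi_1\in z_{\rat}^r(X)$, the definition of rational equivalence supplies irreducible $Z_{\alpha}\subset X$ of codimension $r-1$ and $f_{\alpha}\in\C(Z_{\alpha})^{\times}$ with $\xi_1=\sum_{\alpha}\DIV_{Z_{\alpha}}(f_{\alpha})$, and I put $\xi_1'=\sum_{\alpha}(f_{\alpha},Z_{\alpha})\in z^r(X\times\square)$, so $\del\xi_1'=\xi_1$. Retaining the standing general-position hypothesis of Section~\ref{S1} that $|\xi_1|\cap|\xi_2|=\emptyset$, I would apply a moving lemma (Chow's lemma, or Bloch's quasi-isomorphism between a higher-cycle complex and its subcomplex of cycles in good position with respect to $|\xi_2|$) to replace $\xi_1'$ by another precycle \emph{with the same boundary $\xi_1$} whose supports $Z_{\alpha}$ meet $|\xi_2|$ properly --- so that each $Z_{\alpha}\cap\xi_2$ is a finite set of points --- and avoid $|\DIV_{Z_{\alpha}}(f_{\alpha})|$; here one uses that $\xi_1\sim_{\rat}0$, i.e. $[\xi_1]=0$ in $\CH^r(X)$, so that a representative with prescribed boundary exists in the good-position subcomplex. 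Along such a finite set $\log|f_{\alpha}|$ is finite, the real number $\sum_{\alpha}\int_{Z_{\alpha}\cap\xi_2}\log|f_{\alpha}|$ is defined, and I take this to be $R_{r,1}(\xi_1')(\xi_2)$, i.e. I read \eqref{Reg1} as covering also the case $\del\xi_1'\neq0$. Bilinearity of $\langle\ ,\ \rangle$ in each variable then drops out of the formula.

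Next I would verify independence of the auxiliary choice of $\xi_1'$, which is the substance of the assertion. Let $\xi_1'$ and $\xi_1''$ be two precycles, both in general position with respect to $\xi_2$ and both with $\del\xi_1'=\del\xi_1''=\xi_1$. Then $\eta:=\xi_1'-\xi_1''$ has $\DIV(\eta)=\del\eta=0$, so $\eta$ is a genuine ``$K^{(r)}_1(X)$'' cycle, defining a class in $\CH^r(X,1)$, and it is still in general position with respect to $\xi_2$. For such $\eta$ the real regulator $R_{r,1}(\eta)\in H^{r-1,r-1}(X,\R)\simeq H^{d-r+1,d-r+1}(X,\R)^{\vee}$ of \eqref{Reg1} is honestly defined, and --- by the formula recalled in Section~\ref{S1} --- the quantity $\sum_{\alpha}\int_{Z_{\alpha}\cap\xi_2}\log|f_{\alpha}|$ built from $\eta$ equals $R_{r,1}(\eta)\bigl([\xi_2]\bigr)$, the evaluation against the cohomology class $[\xi_2]\in H^{d-r+1,d-r+1}(X,\R)$. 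Since $\xi_2\sim_{\rat}0$ we have $[\xi_2]=0$, hence $R_{r,1}(\eta)(\xi_2)=0$, i.e. $R_{r,1}(\xi_1')(\xi_2)=R_{r,1}(\xi_1'')(\xi_2)$. As any two admissible choices of $\xi_1'$ --- two different moving-lemma outputs included --- differ by such an $\eta$, the value $\langle\xi_1,\xi_2\rangle$ depends only on $\xi_1$ and $\xi_2$.

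The step I expect to resist a one-line treatment is not the formal chain above but its two non-formal inputs: (i) the moving lemma, invoked so as to keep the boundary $\del\xi_1'=\xi_1$ unchanged, which ultimately rests on a Bertini/incidence-variety argument; and (ii) the fact, taken from \cite{Ja1} and \cite{KLM}, that the real regulator on precycles with $\del=0$ descends to $\CH^r(X,1)$ and that its value on a cycle depends only on that cycle's cohomology class --- precisely what licenses the implication ``$[\xi_2]=0\Rightarrow R_{r,1}(\eta)(\xi_2)=0$''. Everything else is bookkeeping.
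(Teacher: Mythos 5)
Your proposal is correct and follows the paper's own argument: the difference $\eta=\xi_1'-\xi_1''$ of two lifts has $\del\eta=0$, so $R_{r,1}(\eta)$ is an honest class in $H^{r-1,r-1}(X,\R)\simeq H^{d-r+1,d-r+1}(X,\R)^{\vee}$, and it pairs to zero against $[\xi_2]=0$ precisely because $\xi_2\sim_{\rat}0$. The moving-lemma discussion you add is a reasonable elaboration of the general-position hypothesis that the paper states earlier in Section~\ref{S1} and then quietly takes for granted.
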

{The projection formula holds trivially from the definition. That is, we
have}
\begin{prop}
Let $\pi: X\to Y$ be a flat surjective morphism between two smooth
projective varieties $X$ and $Y$. Then
$\langle \xi_1, \pi^* \xi_2\rangle = \langle \pi_* \xi_1,
\xi_2\rangle$ for all $\xi_1\in z_\text{rat}^r(X)$ and
$\xi_2\in z_\text{rat}^{d - r+1}(Y)$ with $|\pi_* \xi_1|\cap
|\xi_2| = \emptyset$.
\end{prop}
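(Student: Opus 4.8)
The plan is to compute $\langle\pi_*\xi_1,\xi_2\rangle$ using the proper pushforward of a higher Chow precycle that represents $\xi_1$, and then to read off the equality from the projection formula for the real regulator current. Write $e:=\dim Y$, so that $\pi$ has relative dimension $d-e\ge 0$; flat pullback sends $z^k(Y)$ to $z^k(X)$ and proper pushforward sends $z^k(X)$ to $z^{k+e-d}(Y)$, both preserving rational equivalence, so $\pi^*\xi_2\in z_{\rat}^{d-r+1}(X)$ and $\pi_*\xi_1\in z_{\rat}^{\,e-d+r}(Y)$. Since $e-(e-d+r)+1=d-r+1$, both pairings in the statement are defined by the previous proposition, applied to $X$ and to $Y$ respectively.

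First I would fix a higher Chow precycle $\xi_1'=\sum_\alpha(f_\alpha,Z_\alpha)$ on $X$ with each $Z_\alpha$ irreducible of codimension $r-1$ and $\DIV(\xi_1')=\xi_1$, and put $\eta':=\sum_\alpha\pi_*(f_\alpha,Z_\alpha)$, where $\pi_*(f_\alpha,Z_\alpha)$ equals $\bigl(\mathrm{Nm}_{\C(Z_\alpha)/\C(\pi(Z_\alpha))}(f_\alpha),\ \pi(Z_\alpha)\bigr)$ when $\pi|_{Z_\alpha}$ is generically finite onto its image and equals $0$ otherwise. The classical compatibility of the norm map with $\DIV$ — equivalently, that proper pushforward is a morphism of cubical higher Chow complexes — gives $\DIV(\eta')=\pi_*(\DIV\xi_1')=\pi_*\xi_1$, so $\eta'$ is a precycle on $Y$ whose divisor is $\pi_*\xi_1$, and hence $\langle\pi_*\xi_1,\xi_2\rangle$ is computed by $\eta'$. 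By a moving lemma for higher Chow precycles, applied with the boundary $\DIV(\xi_1')=\xi_1$ held fixed and without altering the value of either pairing, I may assume the family $\{\pi(Z_\alpha)\}_\alpha$ meets $\xi_2$ properly in $Y$; because $\pi|_{Z_\alpha}$ has finite fibres onto $\pi(Z_\alpha)$, the family $\{Z_\alpha\}_\alpha$ then automatically meets $\pi^*\xi_2$ properly in $X$, and a dimension count shows that the $\alpha$ for which $Z_\alpha$ is contracted give an empty intersection with $\pi^*\xi_2$ on $X$ and contribute nothing to $\eta'$ on $Y$, so they may be ignored.

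The computation is now the projection formula. Let $\mathcal R(\xi_1'):=\sum_\alpha\log|f_\alpha|\,[Z_\alpha]$ denote the locally integrable current on $X$ underlying $R_{r,1}(\xi_1')$, i.e. $\omega\mapsto\sum_\alpha\int_{Z_\alpha}\log|f_\alpha|\,\omega$. Since $\mathrm{Nm}(f_\alpha)$ is the product of the conjugates of $f_\alpha$ along the generic fibre of $\pi|_{Z_\alpha}$ and $\log|\cdot|$ converts products into sums, pushing this current forward gives $\pi_*\mathcal R(\xi_1')=\sum_\alpha\log|\mathrm{Nm}(f_\alpha)|\,[\pi(Z_\alpha)]=\mathcal R(\eta')$, the contracted terms dropping out (the pushed current would be concentrated on $\pi(Z_\alpha)$ yet of the dimension of $Z_\alpha$, hence zero), in agreement with $\pi_*(f_\alpha,Z_\alpha)=0$. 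Feeding this into the current-level projection formula $\langle\pi_*T,\omega\rangle=\langle T,\pi^*\omega\rangle$, together with $\pi^*[\xi_2]=[\pi^*\xi_2]$ (flatness of $\pi$), and unwinding the evaluation against $\xi_2$ into the $0$-cycle formula of the previous proposition, yields
\[
\langle\pi_*\xi_1,\xi_2\rangle=\sum_\alpha\int_{\pi(Z_\alpha)\cap\xi_2}\log|\mathrm{Nm}(f_\alpha)|=\sum_\alpha\int_{Z_\alpha\cap\pi^*\xi_2}\log|f_\alpha|=\langle\xi_1,\pi^*\xi_2\rangle .
\]
Equivalently, point by point: a general $p\in\pi(Z_\alpha)\cap\xi_2$ lies in the \'etale locus of $\pi|_{Z_\alpha}$ with fibre $\{z_1,\dots,z_n\}$, one has $\log|\mathrm{Nm}(f_\alpha)|(p)=\sum_i\log|f_\alpha(z_i)|$, and the local intersection multiplicity of $Z_\alpha$ with $\pi^*\xi_2$ at each $z_i$ equals that of $\pi(Z_\alpha)$ with $\xi_2$ at $p$ by flat base change; summing over $p$ and its preimages gives the middle equality.

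I do not expect a conceptual obstacle here — as the name ``projection formula'' indicates, everything is formal once the pieces are assembled. The one place requiring genuine care is the general-position bookkeeping: one must invoke a moving lemma for $\xi_1'$ with its boundary fixed so that both $0$-cycle formulas are simultaneously valid, check that the contracted components and the ramification locus of each $\pi|_{Z_\alpha}$ (which is of measure zero, respectively disjoint from the finitely many intersection points after the move) do not contribute, and confirm that ``$\int_{Z_\alpha\cap(\,\cdot\,)}\log|f_\alpha|$'' is literally the evaluation of the current $\mathcal R(\xi_1')$ in the sense of \cite{KLM}, so that the current-level projection formula and the identity $\pi_*\mathcal R(\xi_1')=\mathcal R(\pi_*\xi_1')$ apply verbatim. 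The remaining ingredients — flat pullback and proper pushforward preserving rational equivalence, $\pi^*$ on cycles inducing $\pi^*$ on cohomology for flat $\pi$, and $\DIV\circ\mathrm{Nm}=\pi_*\circ\DIV$ — are standard.
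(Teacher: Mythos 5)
Your proof is correct, and in fact it is considerably more detailed than what the paper offers: the authors simply assert that ``the projection formula holds trivially from the definition'' and give no argument at all. Your route — push $\xi_1'=\sum_\alpha(f_\alpha,Z_\alpha)$ forward via $\pi_*(f_\alpha,Z_\alpha)=(\mathrm{Nm}_{\C(Z_\alpha)/\C(\pi(Z_\alpha))}(f_\alpha),\pi(Z_\alpha))$, invoke $\DIV\circ\mathrm{Nm}=\pi_*\circ\DIV$ to get a precycle $\eta'$ bounding $\pi_*\xi_1$, and then apply the current-level adjunction $(\pi_*T)(\omega)=T(\pi^*\omega)$ together with $\log|\mathrm{Nm}(f_\alpha)|=\sum_{z\mapsto p}\log|f_\alpha(z)|$ — is exactly the unwinding the paper has in mind, so in substance the approaches coincide.

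Two small places where your bookkeeping deserves a slightly sharper phrase. First, the ``without altering the value of either pairing'' clause in the moving step is automatic and should be omitted as a hypothesis: the pairing on $X$ is already shown to depend only on $\partial\xi_1'=\xi_1$ (previous proposition), and the pairing on $Y$ depends only on $\partial\eta'=\pi_*\xi_1$, so any choice of $\xi_1'$ in general position computes the same numbers. Second, in the ``point by point'' verification, the identity $m'_z=m_p$ for $z\mapsto p$ is not purely Fulton's projection formula $\pi_*(Z_\alpha\cdot\pi^*\xi_2)=(\pi_*Z_\alpha)\cdot\xi_2$, which only gives $n\,m_p=\sum_{z\mapsto p}m'_z$ with $n=[\C(Z_\alpha):\C(\pi(Z_\alpha))]$; one also needs that $\pi|_{Z_\alpha}$ is a local isomorphism at each of the $n$ preimages $z$ of a generic $p$, so that all $m'_z$ coincide and hence each equals $m_p$. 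You do restrict to the \'etale locus, so the conclusion is right — it is just worth making the implication explicit rather than attributing it to flat base change alone. The cleaner version is the one you also give: the current identity $\pi_*\mathcal R(\xi_1')=\mathcal R(\eta')$ via fibrewise integration, followed by evaluation against $\pi^*[\xi_2]$ and $[\xi_2]$ respectively.

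Beyond these cosmetic points the argument is complete and fills a genuine gap in the exposition.
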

A little less obvious fact is that this pairing is symmetric. That is,
it has the following property which we will call the {\it reciprocity
property\/} of the pairing.

\begin{prop}\label{PKK}
For all $\xi_1\in z^r_{\rat}(X)$, $\xi_2\in
z_{\rat}^{d-r+1}(X)$ with $|\xi_1|\cap |\xi_2| = \emptyset$,
$\langle \xi_1, \xi_2\rangle = \langle \xi_2,\xi_1\rangle$.
\end{prop}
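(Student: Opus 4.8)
The plan is to reduce the asserted identity, via a general‑position argument, to the classical Weil reciprocity law applied on the curves obtained by intersecting the supports of the two precycles.

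Write $\xi_1'=\sum_\alpha(f_\alpha,Z_\alpha)$ with $\codim Z_\alpha=r-1$ and $\xi_2'=\sum_\beta(g_\beta,W_\beta)$ with $\codim W_\beta=d-r$, so that $\xi_1=\sum_\alpha\DIV_{Z_\alpha}(f_\alpha)$ and $\xi_2=\sum_\beta\DIV_{W_\beta}(g_\beta)$; note $\dim Z_\alpha=d-r+1$ and $\dim W_\beta=r$. Since the pairing does not depend on the precycles chosen to represent $\xi_1$ and $\xi_2$ (as established above), I would first apply a moving lemma for higher Chow precycles to replace $\xi_1'$ and $\xi_2'$ — without altering $\xi_1,\xi_2$, hence keeping $|\xi_1|\cap|\xi_2|=\emptyset$ — by representatives in sufficiently general position: each $C_{\alpha\beta}:=Z_\alpha\cap W_\beta$ is empty or a reduced complete curve (the expected dimension being $(d-r+1)+r-d=1$), the $C_{\alpha\beta}$ meet the singular loci of $Z_\alpha,W_\beta$ and the zero/polar loci of $f_\alpha,g_\beta$ only generically, and $f_\alpha,g_\beta$ restrict to nonzero rational functions on every component of $C_{\alpha\beta}$. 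Arranging this simultaneous general position — of the $Z_\alpha$, the $W_\beta$, and the two families of divisors at once, all while fixing the two classes — is the point where the real work lies; granting it, what remains is bookkeeping together with a one‑variable reciprocity law.

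Granting such representatives, in the resulting proper‑intersection range the divisor of a restricted function is the restriction of its divisor, so for each $\alpha$
\[
Z_\alpha\cdot\xi_2=\sum_\beta Z_\alpha\cdot\DIV_{W_\beta}(g_\beta)=\sum_\beta\DIV_{C_{\alpha\beta}}\!\bigl(g_\beta|_{C_{\alpha\beta}}\bigr),
\]
a $0$‑cycle supported on $\bigsqcup_\beta C_{\alpha\beta}$, and symmetrically $W_\beta\cdot\xi_1=\sum_\alpha\DIV_{C_{\alpha\beta}}\!\bigl(f_\alpha|_{C_{\alpha\beta}}\bigr)$. Substituting into the formula for $\langle\ ,\ \rangle$ from the first proposition would give
\[
\langle\xi_1,\xi_2\rangle=\sum_{\alpha,\beta}\ \sum_{p\in C_{\alpha\beta}}\mathrm{ord}_p(g_\beta)\log|f_\alpha(p)|,\qquad \langle\xi_2,\xi_1\rangle=\sum_{\alpha,\beta}\ \sum_{p\in C_{\alpha\beta}}\mathrm{ord}_p(f_\alpha)\log|g_\beta(p)|,
\]
the inner sums being finite and, by the general‑position arrangement, supported at smooth points of $C_{\alpha\beta}$ lying off the common zero/polar set of $f_\alpha$ and $g_\beta$. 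It then suffices to check, for each pair $(\alpha,\beta)$ with $C_{\alpha\beta}\neq\emptyset$, the equality $\sum_p\mathrm{ord}_p(g_\beta)\log|f_\alpha(p)|=\sum_p\mathrm{ord}_p(f_\alpha)\log|g_\beta(p)|$.

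Finally, this last equality is exactly Weil reciprocity for the curve $C_{\alpha\beta}$: passing to its normalization (a disjoint union of smooth complete curves) and writing $f=f_\alpha|_{C_{\alpha\beta}}$, $g=g_\beta|_{C_{\alpha\beta}}$, one has $\prod_p(f,g)_p=1$ for the tame symbols $(f,g)_p=(-1)^{\mathrm{ord}_p(f)\mathrm{ord}_p(g)}\bigl(f^{\mathrm{ord}_p(g)}/g^{\mathrm{ord}_p(f)}\bigr)(p)$, and applying $\log|\cdot|$ annihilates the sign and yields precisely the desired identity. Summing over all $(\alpha,\beta)$ would then give $\langle\xi_1,\xi_2\rangle=\langle\xi_2,\xi_1\rangle$, completing the argument. (One could equally invoke the curve case directly, as the prototypical reciprocity law for the archimedean height pairing on a complete curve.)
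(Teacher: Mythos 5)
Your proposal is correct and follows essentially the same route as the paper: invoke a moving lemma to place the precycles and their divisors in general position so that the supports intersect in complete curves, then apply Weil reciprocity (in the tame-symbol form) on each such curve and take $\log|\cdot|$ to cancel the sign and obtain the symmetry. The paper merely simplifies notation by assuming a single irreducible pair $(f,D)$, $(g,E)$ meeting along an irreducible curve $C$, whereas you carry the general sums over $\alpha,\beta$ explicitly; the substance is identical.
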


\begin{proof} 
Let $(f, D)$ and $(g, E)$ be the higher Chow precycles such that $\xi_1 =
\DIV(f)$ and $\xi_2 = \DIV(g)$.
We can assume, using some additional machinery \cite{Blo1}((Lemma 4.2), that
with regard to the pairs $(f,D)$, $(g,E)$, everything is
in ``general'' position. For notational simplicity, let
us assume that $D$ and $E$ are irreducible and meet properly
along an irreducible curve $C$. Let
\[
f_c := f\big|_C \in \C(C)^{\times}, \quad
g_c := g\big|_C\in \C(C)^{\times}.
\]
For every point $p\in C$, put
\begin{equation}\label{E251}
T_p\{f_c,g_c\} = 
(-1)^{\nu_p(f_c)\nu_p(g_c)}\biggl(\frac{f_c^{\nu_p(g_c)}}{g_c^{\nu_p(f_c)}}\biggr)_p.
\end{equation}
where $\nu_p(h)$ is the vanishing order of a function $h$
at $p$.
Since $|\xi_1| \cap |\xi_2| = \emptyset$, it follows that
\[
T_p\{f_c,g_c\} =
\begin{cases}
f_c^{\nu_p(g_c)}(p)&\text{if $\nu_p(g_c) \ne
  0$}\\
g_c^{-\nu_p(f_c)}(p)&\text{if $\nu_p(f_c) \ne 0$}\\
1&\text{otherwise.}
\end{cases}
\]
Then it is a consequence of Weil reciprocity:
\[
\prod_{p\in C}T_p\{f_c,g_c\} = 1,
\]
that
\[
\int_{D\cap \DIV(g)} \log|f| = \int_{E\cap \DIV(f)} \log|g|.
\]
Obviously, this is equivalent to $\langle \xi_1, \xi_2\rangle =
\langle \xi_2, \xi_1\rangle$. {The reader can consult chapters 2.2.2 and 2.2.3 of \cite{GS} for a generalization of this.}
\end{proof}

In addition, this pairing is also non-degenerate in the sense of detecting rational equivalence (see \cite{C-L} for details). {This pairing is 
a special case of  the complex Archimedean height pairing, well} presented in \cite{MS1}, and plays a role at ``infinity'' in \S\ref{S3}.
We will return to a generalization of this Archimedean height pairing in  \S\ref{S6}. 

\section{Notation and a breezy review of background material}\label{S2}

\noindent
$\bullet$ 
Unless otherwise specified, $X$ is a smooth projective variety of dimension $d$ defined
over a subfield $k\subseteq \C$, and $H^{\bullet}(X(\C))$ is singular cohomology, treating $X$
as a complex analytic space.

\noindent
$\bullet$ For a quasi-projective variety $W$ over a field ({{or more generally a noetherian and separated scheme $W$}}), $z^r(W)$ is the free abelian group
generated by subvarieties of codimension $r$ in $W$. {The Chow group of $W$ is defined as $\CH^r(W)=z^r(W)/z^r_{rat}(W)$, where $z^r_{rat}(W)$ is the subgroup of cycles rationally equivalent to zero. The rational Chow groups will be denoted by $\CH^r(W;\Q):=\CH^r(W)\otimes _{\Z}\Q$.}

\noindent
$\bullet$ 
Let $\bA\subseteq \R$ be a subring. The reader is assumed to have some familiarity with the
abelian category of $\bA$-MHS (mixed Hodge structures). Two excellent reference sources are \cite{B-Z} and \cite{Ja1}.
If $r\in \Z$, then the Tate twist $\bA(r) :=(2\pi\I)^r\bA$
is the (pure) Hodge structure with weight $-2r$ and Hodge type $(-r,-r)$. It is customary to make
the further assumption that $\bA\otimes_{\Z}\Q$ is a field, and we will assume this. The reasons
have to do with Deligne's observation (his $I^{p,q}$ decomposition theorem - a user-friendly 
explanation provided in \cite{St}) that the weight functor $W_{\bullet}$ is exact
(same for the Hodge filtration functor $F^{\bullet}$)\footnote{{Exactness is implied by strict compatibility 
which means that $h(F^rV_{1,\C}) = h(V_{1,\C})\cap F^rV_{2,\C}$
and $h(W_{\ell}V_{1,\bA\otimes\Q}) = h(V_{1,\bA\otimes\Q})\cap W_{\ell}V_{2,\bA\otimes\Q}$
for all $r$ and $\ell$. The idea is this. For any $\bA$-MHS $V$, $V_{\C}$
has a $\C$-splitting into a bigraded direct sum of  complex vector spaces $I^{p,q}
:= F^p\cap W_{p+q}\cap \big[\ol{F^q} \cap W_{p+q} + 
\sum_{i\geq 2}\ol{F^{q-i+1}}\cap W_{p+q-i}\big]$, where one shows that
$F^rV_{\C} = \oplus_{p\geq r}\oplus_qI^{p,q}$ and $W_{\ell}V_{\C} = \oplus_{p+q\leq\ell}I^{p,q}$.
Then by construction of $I^{p,q}$, one has $h(I^{p,q}(V_{1,\C}) \subseteq I^{p,q}(V_{2,\C})$. 
Hence $h$ preserves both the Hodge and complexified weight filtrations. Now use the fact that
$\bA\otimes\Q$ is a field to deduce that $h$ preserves the weight filtration over $\bA\otimes\Q$.}}. Let $V_1,\ V_2$ be $\bA$-MHS. Carlson \cite{Ca} was the
first to give an explicit description of $\Ext^1_{\bA-\MHS}(V_1,V_2)$ in terms of a ``torus'', with the consequence that
$\Ext^1_{\bA-\MHS}(V_1,-)$ is a right exact functor. If we assume for the moment that $\bA$-MHS has
enough injectives, then it is clear from formal homological algebra arguments that  $\Ext^n_{\bA-\MHS}(V_1,V_2) = 0$ form $n\geq 2$. In general,
one uses an Yoneda-Ext argument.  The vanishing of the higher $\Ext$'s was first proven by
Beilinson \cite{Be1}. 

\noindent
$\bullet$ 
Let's fix $\bA$ as per the above paragraph, and put, for $V$ a $\bA$-MHS,
$\Gamma(V) = \hom_{\bA-\MHS}(\bA(0),V)$, $J(V) = \Ext^1_{\bA-\MHS}(\bA(0),V)$.
For instance, if $\bA=\Q$, then the classical Hodge conjecture asserts that
$\Gamma\big(H^{2r}(X(\C),\Q(r))\big)$ is generated by the fundamental classes of 
cycles  $z^r(X;\Q) := z^r(X)\otimes \Q$.   The space $\Gamma\big(H^{2r}(X(\C),\Q(r))\big)$,
of dimension $M$ say, 
in untwisted form  is precisely $F^rH^{2r}(X(\C),\C) \cap
H^{2r}(X(\C),\Q)\simeq \bigoplus_{1}^M\Q(-r)$. In general, $F^rH^{i}(X(\C),\C) \cap
H^{i}(X(\C),\Q)$ need not be a Hodge structure, as first observed by Grothendieck (see \cite{Lew1}, \S 7)).
The (unique) largest Hodge structure in  $F^rH^{i}(X(\C),\C) \cap H^{i}(X(\C),\Q)$ is denoted
by $N_H^rH^i(X(\C),\Q)$. There is also a filtration by coniveau, denoted by 
\[
N^rH^i(X,\Q) \subseteq N_H^rH^i(X(\C),\Q).
\]
The (Grothendieck amended) general Hodge conjecture (GHC) asserts that
the aforementioned inclusion is an equality (the reader can again consult \cite{Lew1}(\S 7) for details).

\noindent
$\bullet$ If $V$ is a $\bA$-MHS, then by (\cite{Ca}, \cite{Ja2}),
\[
J(V) \simeq \frac{W_0V_{\C}}{F^0W_0V_{\C} + W_0V}.
\]
As an example for $X/\C$, $J\big(H^{2r-1}(X(\C),\Z(r))\big)$ denotes the $r$-th Griffiths jacobian, and
$J\big(H^{2r-2}(X(\C),\R(r))\big) \simeq H^{r-1,r-1}(X(\C),\R(r-1))$, the target space
(after incorporating twists{, viz., after multiplication by $(2\pi\I)^{r-1}$}) of $R_{r,1}$ in (\ref{Reg1}). Indeed, $J\big(H^{2r-2}(X(\C),\R(r-1))\big)$
is a version of real Deligne cohomology $H_{\D}^{2r-1}(X(\C),\R(r))$, where we consider $X/\C$  as a real variety via 
$X \to \Spec(\C)\to \Spec(\R)$ (see \cite{Ja1}).

\section{Intermezzo I}\label{IM1}

Steven Zucker's seminal work  \cite{Z}, the $L_2$-cohomology in the Poincar\'e metric associated to a
polarizable variation of Hodge structure over a base curve, turned out
to provide one instance of a $L_2$-cohomology interpretation  of a corresponding intersection cohomology, 
the coincidence in the general situation over an arbitrary base manifold $S$ with $\ol{S}$ K\"ahler, conjectured
by Deligne, and settled by the works of W. Schmid, A. Kaplan, and E. Cattani, following the development
of Schmid's $sl_2$-orbit theorem to several variables.\footnote{The reader is encouraged to consult \cite{C-K-S} for
more precise details concerning this discussion.}
In this part, we are interested in a lesser known result of Zucker's work, as it relates
to a global function field height pairing due to Beilinson \cite{Be3}, albeit in characteristic zero.
{We wish to make it clear that the construction here is simply an interpretation of section 1 in \cite{Be3},
from the point of view of the $L_2$-cohomology  in \cite{Z}.}
Start off with a diagram
\[
\begin{matrix}\label{IM1D1}
\X&\hookrightarrow&\ol{\X}\\
&\\
\rho\biggl\downarrow\ &&\quad\biggr\downarrow\ol{\rho}\\
&\\
C&{\buildrel j\over\hookrightarrow}&\ol{C}
\end{matrix}
\]
 where $\ol{C}$  is a smooth projective curve, $C$ affine,  $\ol{\rho}$ is proper, $\rho$  is smooth and proper, and all varieties
are smooth, defined over a field $k\subseteq \C$. Let $K = k(C) = k(\eta)$, $\eta\in C/k$ the generic point,  and set 
$X_K = \X_{\eta}$, the generic fiber. Note that
\[
\CH^r(\X_{\eta}) = \lim_{\buildrel \longrightarrow\over {U\subset C/k}}\CH^r(\rho^{-1}(U/k)),
\]
and that the cycle class map 
\[
\CH^r(\X_{\eta};\Q) \to H^{2r}(\X_{\eta}(\C),\Q(r)) :=  \lim_{\buildrel \longrightarrow\over {U\subset C/k}}H^{2r}(\rho^{-1}(U(\C))),\Q(r))
\]
is induced by
\[
\lim_{\buildrel \longrightarrow\over {U\subset C/k}}\big(\CH^r(\rho^{-1}(U/k);\Q)\to H^{2r}(\rho^{-1}(U(\C))),\Q(r))\big).
\]

\noindent{\underbar{Warning}}. The definition of $H^{2r}(X_K,\Q(r))$, which is commonly interpreted as $H^{2r}(X_K(\C),\Q(r))$, should
{\em not} be misconstrued as the same object as $H^{2r}(\X_{\eta}(\C),\Q(r))$, the latter defined by a limit process. 

\medskip
The affine Lefschetz theorem, 
the fact that $C$ is a curve, together with the (known degeneration of the) Leray spectral sequence (Deligne, but see
\cite{Z}(\S15) and the references cited there), tells us that the Leray filtration
\[
{H^{2r}(\X(\C),\Q(r)) = L_0\supset L_1\supset L_2\supset \{0\},}
\]
satisfies 
\[
L_0/L_1 = H^0(C,R^{2r}\rho_*\Q(r)),
\]
\[
 L_1/L_2 = H^1(C,R^{2r-1}\rho_*\Q(r)),
 \]
  \[
  L_2H^2(C,R^{2r-2}\rho_*\Q(r)) = 0,
 \]
 with same story for $H^{2r}(\X_{\eta}(\C),\Q(r))$, where we replace $C$ by $\eta$.
 It is {clear then that $\xi\in \CH^r_{\hom}(X_K;\Q) =  \CH^r_{\hom}(\X_{\eta};\Q) $ maps to zero in $H^0(\eta,R^{2r}\rho_*\Q(r))$
 by the Leray spectral sequence associated to $\rho$.}  Indeed,
 $\xi$ will have a spread cycle $\tilde{\xi}\in \CH^r(\ol{\X};\Q)$, with $\tilde{\xi}\big|_{\X} \mapsto 0\in H^0(C,R^{2r}\rho_*\Q(r))$.
 Thus  $\tilde{\xi}\big|_{\X}\in H^1(C,R^{2r-1}\rho_*\Q(r))$. Let $d = \dim X_K$, which is the relative dimension of
 the flat morphism $\ol{\rho}$. Observe that the product
 $
 H^1(C,R^{2r-1}\rho_*\Q(r)) \otimes H^1(C,R^{2d-2r+1}\rho_*\Q(d-r+1)) \xrightarrow{\cup} H^2(C,R^{2d}\rho_*\Q(d)) = 0,
 $
 is zero. {Indeed, to re-iterate, this is due to the affine Lefschetz theorem applied to a smooth affine $C$
 with cohomological degree $2 > 1 = \dim C$.} Notice however that $\ol{\X}$ is complete (and smooth){, and hence
 $H^{2r}(\ol{\X},\Q(r))$ is a pure Hodge structure of weight zero, viz., $H^{2r}(\ol{\X},\Q(r)) = W_0H^{2r}(\ol{\X},\Q(r))$. 
 Furthermore $[\xi]$ is the restriction of $[\tilde{\xi}]\in H^{2r}(\ol{\X},\Q(r))$.}
  Thus it is well known (rather implicit 
 after reading \cite{Z}(\S14) {and more to the point in  \cite{PM}}), that
for $\xi \in \CH^r_{\hom}(X_K;\Q)$, 
\[
[\xi]\in W_0H^1(\eta,R^{2r-1}\rho_*\Q(r)) = W_0H^1(C,R^{2r-1}\rho_*\Q(r))
\]
\[
= H^1(\ol{C},j_*R^{2r-1}\rho_*\Q(r)),
\]
the latter being the object of interest in \cite{Z}.
Note that the pairing (also being a pairing of intersection cohomologies)
\[
H^1(\ol{C},j_*R^{2r-1}\rho_*\Q(r))\otimes H^1(\ol{C},j_*R^{2d-2r+1}\rho_*\Q(d-r+1))
\]
\[
\xrightarrow{\cup} H^2(\ol{C},R^{2d}\rho_*\Q(d)) \simeq \Q,
\]
is non-degenerate \cite{Z}.
Now for $\xi_1\in \CH_{\hom}^r(X_K;\Q)$, $\xi_2\in \CH_{\hom}^{d-r+1}(X_K;\Q)$, we arrive at Beilinson's (global case) height pairing over 
the function field of a curve \cite{Be3}, viz.,
$
\langle\ ,\ \rangle : \CH_{\hom}^r(X_K;\Q)\otimes \CH_{\hom}^{d-r+1}(X_K;\Q) \to 
$
\[
H^1(\ol{C},j_*R^{2r-1}\rho_*\Q(r))\otimes H^1(\ol{C},j_*R^{2d-2r+1}\rho_*\Q(d-r+1)) \simeq \Q.
\]

\begin{rmk} For each closed point $v \in C$, Beilinson \cite{Be3} attaches a local linking number $\langle\ ,\ \rangle_v$, and shows that the global height pairing is the sum of local ones.
\end{rmk}

\section{The arithmetic scenario}\label{S3}

Now let $X$ be a smooth projective variety of dimension $d$, defined over a number field 
$k$ (i.e., $[k:\Q]<\infty $). Denote by $z^{\bullet }_{\hom}(X)$ the nullhomologous cycles. 
Under some assumptions, Beilinson in \cite{Be3} defined a height pairing
$$z^r_{\hom}(X;\Q)\times z^{d-r+1}_{\hom}(X;\mathbb{Q})\rightarrow \mathbb{R},$$
which factors through $z^{\bullet }_{\rat}(X;\mathbb{Q})${, viz.,
an induced  height pairing
\begin{equation}\label{BHT}
\langle\ ,\ \rangle _{\rm HT}:\CH^r_{\hom}(X;\mathbb{Q})\times \CH^{d-r+1}_{\hom}(X;\mathbb{Q})\rightarrow \mathbb{R}.
\end{equation}}
This pairing should have a number of conjectural properties, for example
{\begin{conj}\label{Beconj1}(Conjecture 5.4 (a) of \cite{Be3})
The height pairing is non-degenerate.
\end{conj}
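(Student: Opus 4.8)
The plan is not to establish Conjecture~\ref{Beconj1} outright --- this is known to be very deep --- but to reduce it to the standard finiteness and injectivity conjectures on Chow groups, by exploiting the decomposition of Beilinson's pairing into local terms. Recall that, following Beilinson \cite{Be3}, for $\xi_1\in z^r_{\hom}(X;\Q)$ and $\xi_2\in z^{d-r+1}_{\hom}(X;\Q)$ in general position one has a finite sum
\[
\langle\xi_1,\xi_2\rangle_{\rm HT}=\sum_{v}\langle\xi_1,\xi_2\rangle_v
\]
over the places $v$ of $k$, where the archimedean terms are complexifications of the Archimedean pairing of \S\ref{S1} --- which, as noted there and in \cite{C-L}, already detects rational equivalence over $\C$ --- and the non-archimedean terms are intersection multiplicities of models of the $\xi_i$ over $\Spec(\CO_v)$, the sum being independent of those choices. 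The first step is to assemble these local data into two global realizations of the class of $\xi_1$: the Beilinson regulator
\[
\mathrm{reg}_{\D}(\xi_1)\in \Ext^1_{\bA-\MHS}\big(\bA(0),H^{2r-1}(X(\C),\bA(r))\big),
\]
built from the archimedean terms exactly as $R_{r,1}$ was built in (\ref{Reg1}), and the $\ell$-adic Abel--Jacobi class
\[
\mathrm{aj}_{\ell}(\xi_1)\in H^1\big(k,\,H^{2r-1}_{\et}(X_{\ol{k}},\Q_{\ell}(r))\big),
\]
obtained from the finite places via Hochschild--Serre together with the vanishing of the cycle class in $H^0(k,H^{2r}_{\et}(X_{\ol{k}},\Q_\ell(r)))$, the arithmetic analogue of the Leray-filtration vanishing exploited in \S\ref{IM1}.

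Granting the expected compatibility of $\langle\ ,\ \rangle_{\rm HT}$ with these realizations, the pairing value is computed by cup-producting $\mathrm{reg}_{\D}(\xi_1)$ (resp.\ $\mathrm{aj}_\ell(\xi_1)$) against the corresponding class of $\xi_2$ under Poincar\'e/Tate duality between $H^{2r-1}$ and $H^{2d-2r+1}$, the relevant target pairing being non-degenerate just as the pairing on $H^1(\ol{C},j_*R^{2r-1}\rho_*\Q(r))$ was in \S\ref{IM1}. Consequently the left kernel of $\langle\ ,\ \rangle_{\rm HT}$ lies inside the common kernel of $\mathrm{reg}_{\D}$ and $\mathrm{aj}_\ell$ on $\CH^r_{\hom}(X;\Q)$; by the reciprocity (symmetry) of the pairing --- the analogue of Proposition~\ref{PKK} --- the right kernel is handled by the same argument with $\xi_1$ and $\xi_2$ interchanged. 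Hence non-degeneracy would follow from two inputs: (i) $\CH^r_{\hom}(X;\Q)$ is finite-dimensional --- the Beilinson--Bloch conjecture on Chow groups; and (ii) the common kernel of the Beilinson regulator and the $\ell$-adic Abel--Jacobi map is rationally trivial --- the injectivity half of the Bloch--Beilinson conjectures.

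The hard part --- indeed the crux --- is exactly (i) together with (ii): for $r\ge 2$ both are wide open and morally of the same order of difficulty as Conjecture~\ref{Beconj1} itself, so what is offered above is a \emph{reduction}, not a proof, and I would not expect to remove these hypotheses. What one can actually deliver is the clean reduction plus a check of the known cases: for $r=1$ one has $\CH^1_{\hom}(X;\Q)\cong \Pic^0(X)(k)\otimes\Q$, finite-dimensional by Mordell--Weil, and $\langle\ ,\ \rangle_{\rm HT}$ is (a rational multiple of) the N\'eron--Tate pairing, whose non-degeneracy is a theorem; analogous unconditional statements hold for $0$-cycles on curves and for a handful of special higher-dimensional families. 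Absent progress on Bloch--Beilinson, I would single out (ii) --- injectivity, after $\otimes\Q$, of the Abel--Jacobi map on the homologically trivial part --- as the genuine obstruction.
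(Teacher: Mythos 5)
The paper offers no proof of Conjecture~\ref{Beconj1}; it is stated, as in \cite{Be3}, as an open conjecture, with the only commentary being the heuristic that it mimics the non-degeneracy of the Poincar\'e pairing on $H^{2r-1}\times H^{2d-2r+1}$. You correctly recognize that no proof can be expected and offer a conditional reduction instead --- that is the right posture --- but the reduction as written has a genuine gap.

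The problem is the step from ``the left kernel of $\langle\ ,\ \rangle_{\rm HT}$ lies inside the common kernel of $\mathrm{reg}_{\D}$ and $\mathrm{aj}_\ell$'' to the assertion that (i) finite-dimensionality and (ii) injectivity of the realizations then suffice. Even granting the compatibility of $\langle\ ,\ \rangle_{\rm HT}$ with the cohomological cup products, knowing that $\mathrm{reg}_{\D}(\xi_1)\ne 0$ does not prevent $\mathrm{reg}_{\D}(\xi_1)$ from annihilating the \emph{entire image} of $\CH^{d-r+1}_{\hom}$ under the complementary realization: the cohomological pairing is non-degenerate on the full cohomology, but the images of the Chow groups could well be isotropic or fail to be mutual annihilator complements. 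So injectivity plus finite-dimensionality do not formally yield non-degeneracy of the restricted pairing; one needs a further (and itself conjectural) duality statement about those images. The route that actually closes this gap --- and which the paper tacitly points to by placing Conjecture~\ref{Beconj2} immediately after Conjecture~\ref{Beconj1} --- is the Hodge-index statement: if hard Lefschetz holds on null-homologous cycles and $\langle\cdot,L^{d-2r+1}\cdot\rangle_{\rm HT}$ is $(-1)^r$-definite on each primitive piece, then non-degeneracy follows from the primitive decomposition exactly as in the classical Hodge--Riemann situation. Your reduction should be rerouted through definiteness on primitives rather than through injectivity of realization maps. A secondary quibble: the finite-place contributions become $\R$-valued only after multiplying $\ell$-adic/intersection-theoretic data by $\log N_\wp$, so they are not literally computed by a cup product in $\ell$-adic cohomology; the compatibility you invoke needs to be phrased more carefully to accommodate this.
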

\begin{conj}\label{Beconj2}(Hodge-index conjecture 5.5 of \cite{Be3})
Assume that a hard Lefschetz conjecture holds on null-homologous cycles (conjecture 5.3 of \cite{Be3}) and consider the primitive cycle decomposition. Let $L_X\in CH^1(X)$ be the class of a hyperplane section. Then the form $\langle \cdot , L^{d-2r+1}\cdot \rangle _{\rm HT}$ is definite of sign $(-1)^r$ on the primitive $r$-cycles for $r\leq \frac{d+1}{2}$. 
\end{conj}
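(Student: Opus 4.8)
\medskip
\noindent
The plan is to adopt the classical template for Hodge-index statements and marry it to the Hodge-theoretic machinery of \S\ref{S1} and \S\ref{S2}. Beilinson's pairing $(\ref{BHT})$ decomposes as a sum of local terms $\langle \xi_1,\xi_2\rangle_{\rm HT}=\sum_{v}\langle \xi_1,\xi_2\rangle_v$ over the places $v$ of $k$ (in the function-field model of \S\ref{IM1} this is the splitting into local linking numbers recorded in the Remark there), and I would establish definiteness by treating $v\mid\infty$ and the finite places separately. For the diagonal input $\xi_2=L^{d-2r+1}\cdot\xi_1$ with $\xi_1$ primitive of codimension $r$, I would first invoke the assumed hard Lefschetz on null-homologous cycles (Conjecture 5.3 of \cite{Be3}) to pin down a genuine Lefschetz/primitive decomposition of $\CH^r_{\hom}(X;\Q)$ which is compatible, via the cycle class maps of the relevant spreads, with the primitive part of $H^{2r-1}$; this is what makes ``the primitive $r$-cycles'' a well-defined subgroup on which $\langle\cdot,L^{d-2r+1}\cdot\rangle_{\rm HT}$ is being evaluated, and it is the very same hard Lefschetz input that supplies the polarization used below.

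For $v\mid\infty$ I expect the local height $\langle\xi_1,L^{d-2r+1}\xi_1\rangle_v$ to carry the sign $(-1)^r$, and this is the genuinely Hodge-theoretic heart of the argument, in the spirit of \cite{Z} and of \S\ref{S6}. Concretely, $\langle\cdot,\cdot\rangle_v$ should be identified with the archimedean height --- the period of the biextension cut out by $\xi_1$ and $L^{d-2r+1}\xi_1$ --- and hence, after the identifications of \S\ref{S2} with real Deligne cohomology, with a value of the regulator $R_{r,1}$ of \S\ref{S1}; its leading invariant is the polarization form on $H^{2r-1}_{\Prim}$ evaluated on the Abel--Jacobi class of $\xi_1$ against its $L$-translate. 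The second Hodge--Riemann bilinear relations for the primitive part of odd cohomology then deliver precisely the sign $(-1)^r$ in the range $r\le(d+1)/2$. In the $L^2$ language this is the assertion that the curvature form of the Poincar\'e-type metric on the biextension line bundle is semidefinite of the correct sign on primitive classes --- exactly the phenomenon controlled by Zucker's work, and, near the boundary $\ol C\setminus C$ in the function-field picture, by the $sl_2$-orbit theorem.

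The finite places are the main obstacle. For $v$ non-archimedean, $\langle\xi_1,L^{d-2r+1}\xi_1\rangle_v$ is an intersection number on the special fibre of a regular model $\mathcal{X}\to\Spec\,\cO_k$, in the sense of the arithmetic intersection theory reviewed in \cite{GS}, and there is no reason for these fibrewise contributions to have a uniform sign; only after adding the archimedean term and invoking a \emph{global} arithmetic Hodge-index theorem does one expect to recover definiteness. Such a theorem is currently available only for $r=1$ (equivalently $r=d$), where, via a Faltings--Hriljac-type identity, the statement reduces to the arithmetic Hodge index theorem for arithmetic surfaces of Faltings, Hriljac, and Moriwaki, settling $r=1\le(d+1)/2$ for all $d$. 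For general $r$ the needed positivity of an arithmetic intersection form in higher codimension is open, so unconditionally I would only be able to prove Conjecture \ref{Beconj2} for cycles with everywhere good reduction --- there every finite-place term vanishes and the archimedean Hodge--Riemann argument concludes on its own. The essential missing ingredient is thus a higher-codimension arithmetic Hodge-index theorem (or a global argument bypassing the local decomposition entirely); and even the archimedean step is only as unconditional as the hard Lefschetz and Hodge-type hypotheses already built into the statement.
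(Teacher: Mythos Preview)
The statement you are attempting to prove is a \emph{conjecture}, not a theorem: the paper records it as Beilinson's Hodge-index conjecture (Conjecture~5.5 of \cite{Be3}) and offers no proof whatsoever, because none is known. There is therefore no ``paper's own proof'' to compare your proposal against. The paper merely \emph{assumes} Conjecture~\ref{Beconj2} later (in Proposition~\ref{HI}) in order to deduce an analogous definiteness statement for the pairing $\langle\cdot,\cdot\rangle^{\nu}_{\rm HT}$ on graded pieces of the Bloch--Beilinson filtration; it never attempts to establish the conjecture itself.

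To your credit, your write-up is not really a proof but an honest outline of where the difficulties lie, and you correctly identify the main one: the archimedean contribution can plausibly be controlled by Hodge--Riemann bilinear relations on primitive cohomology, but the non-archimedean local terms have no a~priori sign, and a higher-codimension arithmetic Hodge-index theorem is exactly what is missing. That diagnosis is accurate and is the reason the statement remains conjectural. But you should not present this as a proof proposal for a theorem of the paper; it is a sketch of why an open problem is hard, and the paper treats it as an assumption, not a result.
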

These conjectures seem to mimick the nondegeneracy and the polarizing properties of the cohomology of $X$. For example, Conjecture \ref{Beconj1} is an analog of the non-degeneracy of
$$H^{2r-1}(X(\C),\mathbb{Q})\times H^{2d-2r+1}(X(\C),\mathbb{Q})\rightarrow \Q.$$}

It is instructive to explain the idea behind the pairing: {The ingredient comes from
\subsection{Arithmetic Chow groups}
References for this section are \cite{GS}, \cite{BGS}, and \cite{Ku}.
We will only provide a brief glimpse into this fascinating theory. Interested readers may consult the 
references cited above for details (especially \cite{BGS}). We begin with a motivating example (Chapter III of \cite{Neu}):
\begin{ex}\label{E3}
Consider a number field $k$ with the number ring $\mathcal{O}_k$. A prime $\wp $ of $k$ is a class of equivalent valuations of $k$. The non-Archimedean equivalence classes are called \textbf{finite} primes and accordingly the Archimedean ones are \textbf{infinite} primes. The infinite primes $\wp $ are obtained from the embeddings $\tau: k\hookrightarrow \C$. There are two sorts of these: the real primes, corresponding to the real embeddings, and the complex primes corresponding to the pairs of complex conjugate non-real embeddings. The finite primes will be denoted formally by $\wp $ and the infinite primes by $\wp _{\infty }$.\\

To each prime $\wp \in k$ (finite or infinite), we associate a canonical homomorphism
$$v_{\wp }: k^*\rightarrow \R$$
from the multiplicative group $k^*$ of $k$. If $\wp $ is a finite prime, then $v_{\wp }$ is a $\wp $-adic exponential valuation which is normalized by the condition $v_{\wp }(k^*)=\Z$. If $\wp $ is infinite, then $v_{\wp }$ is given by $v_{\wp }(a)=-\ln |\tau a|$, where $\tau $ is an embedding which defines $\wp $.\\

\textbf{Arakelov class group of $\mathcal{O}_k$:} The group $\widehat{Div}(\mathcal{O}_k)$- of (Arakelov) divisors is defined by elements of the form
$${D} :=\sum _{\wp }m_{\wp }\wp +\sum _{\wp _{\infty }}\lambda _{\infty }\wp _{\infty },$$
where $m_{\wp }\in \Z$ and $\lambda _{\infty }\in \R$, respectively. The principal divisors $\widehat{\mathcal{P}}(\mathcal{O}_k)$ are of the form
$$\sum _{\wp }v_{\wp }(\alpha )\wp +\sum _{\wp _{\infty }}(-\log|\alpha |_{\wp _{\infty }}),$$
where $|\alpha |_{\wp _{\infty }}=|\tau \alpha |$ if $\wp _{\infty }$ is real, and $|\alpha |_{\wp _{\infty }}=|\tau \alpha |^2$ if $\wp _{\infty }$ is complex. We define the \textit{Arakelov class group} of $\mathcal{O}_k$ as the quotient
$$\widehat{\rm Cl}(\mathcal{O}_k):= \widehat{Div}(\mathcal{O}_k)/\widehat{\mathcal{P}}(\mathcal{O}_k).$$
One can define a real number, called the \textit{degree} of a divisor ${D}$ as
$$\widehat{deg}({D}):=\sum _{\wp }m_{\wp }\log N_{\wp }+\sum _{\wp _{\infty }}\lambda _{\infty },$$
with $N_{\wp}=|\mathcal{O}_k/\wp |$. The degree of a principal divisor is zero by the product formula. Hence we get a well-defined (continuous) homomorphism
$$\widehat{deg}:\widehat{\rm Cl}(\mathcal {O}_k)\rightarrow \mathbb{R}.$$
\end{ex}
More generally, consider a regular, projective and flat scheme $\widetilde{X}\rightarrow S=\Spec(\mathcal{O}_k)$ of absolute dimension $d+1$. Such a scheme will be referred to as a regular arithmetic variety. Note that, from the definition $\widetilde{X}$ can be seen as a projective and flat scheme over $\Spec(\Z)$.\\
For a regular arithmetic variety $\widetilde{X}$, and any integer $r\geq 0$ we let $Z^r(\widetilde{X})$ be the free abelian group of cycles of codimension $p$ over $\widetilde{X}$. The set of complex points $\widetilde{X}(\C)$ of $\widetilde{X}$ can be identified with the disjoint union $\coprod _{\sigma : k\hookrightarrow \C}\widetilde{X}_{\sigma }(\C)$. Let $F_{\infty }: \widetilde{X}(\C)\rightarrow \widetilde{X}(\C)$ be the antiholomorphic involution coming from complex conjugation. We denote by $D^{r,r}(\widetilde{X}_{\R})$ the set of real currents in $D^{r,r}(\widetilde{X}(\C))$ ({with respect to a suitable action $F^*_{\infty }$ of $F_{\infty }$ on $D^{r,r}(\widetilde{X}(\C))$})\footnote{{A comment is in order here: In Arakelov setting, a current $\alpha \in D^{r,r}(\widetilde{X}(\C))$ is called real if $F^{*}_{\infty }(\alpha )= (-1)^r\alpha $ (see either section 3.2 of \cite{GS}) or 2.1 of \cite{BGS}).}}\\
Now, any cycle $Z\in {Z^r(\widetilde{X})}$ defines a current $\delta _Z\in D^{r,r}(\widetilde{X}_{\R})$ by integration on its set of complex points. A \textit{Green current} for $Z$ is any current $g\in D^{r-1,r-1}(\widetilde{X}_{\R})$ such that $dd^cg+\delta _Z$ is smooth (see \S 1 of \cite{BGS} for details and notations). Denote by $\widehat{Z}^r(\widetilde{X})$- the group of pairs $(Z, g_Z)$ where $Z\in Z^r(\widetilde{X})$ and $g_Z$ is a Green current for $Z$, with addition defined component-wise. Let $\widehat{R}^r(\widetilde{X})\subset \widehat{Z}^r(\widetilde{X})$ be the subgroup generated by pairs of the form $(0, \partial u+\overline{\partial }v)$, where $u$ and $v$ are currents of type $(r-2,r-1)$ and $(r-1,r-2)$ respectively ($\partial $ and $\overline{\partial }$ being suitable operations on the space of currents), and $(div(f), -\log|f|^2)$, where $f$ is a rational function on an integral subscheme $\widetilde{Y}\subset \widetilde{X}$ of codimension $r-1$, and $-\log|f|^2$ is the current on $\widetilde{X}(\C)$ obtained by restricting forms to the smooth part of $\widetilde{Y}(\C)$ and integrating against the $L^1$ function $-\log|f|^2$. Now define the \textit{arithmetic Chow group} of codimension $r$ as
{$$\widehat{\CH}^r(\widetilde{X}) = \widehat{Z}^r(\widetilde{X})/\widehat{R}^r(\widetilde{X}).$$}
\begin{rmk}
Arithmetic Chow groups can be defined for more general types of arithmetic varieties assuming {only} that the generic fibre is smooth (refer to \S 3.2 of \cite{GS} for details). {In case our arithmetic variety is $S=\Spec(\mathcal{O}_k)$ for the number ring $\mathcal{O}_k$ of $k$, the arithmetic Chow group $\widehat{\CH}^1(S){\cong }\widehat{\rm Cl}(\mathcal{O}_k)$.}\footnote{{One uses a special case of theorem 3.3.5, exact sequence (i) of \cite{GS}, setting $\widetilde{X}=S=\Spec(\mathcal{O}_k)$ (see section 3.4 of \cite{GS} for details).}}
\end{rmk}
We note down some crucial properties of arithmetic Chow groups:
\begin{itemize}
\item{ (Theorem 4.2.3 of \cite{GS}) There is a cup product of arithmetic Chow groups
$$\widehat{\CH}^r(\widetilde{X})\otimes \widehat{\CH}^s(\widetilde{X})\rightarrow \widehat{\CH}^{r+s}(\widetilde{X};\Q),$$
formally defined by the formula $[(Z_1,g_{Z_1})]\cdot [(Z_2,g_{Z_2})]=[(Z_1\cdot Z_2, g_{Z_1}\star g_{Z_2})]$, where $\star $ denotes the star product of Green currents (\S 1 of \cite{GS}).}
\vspace{0.4cm}
\item{(Theorem 3.6.1 and 4.2.3 of \cite{GS}) Let $f:\widetilde{X}\rightarrow \widetilde{Y}$ be a morphism of regular arithmetic varieties. Then there is a pull-back homomorphism $f^*: \widehat{\CH}^r(\widetilde{Y})\rightarrow \widehat{\CH}^r(\widetilde{X};\Q)$. It is multiplicative, i.e., given $\alpha \in \widehat{\CH}^r(\widetilde{Y})$ and $\beta \in \widehat{\CH}^s(\widetilde{Y})$, we have
$$f^*(\alpha \cdot \beta )=f^*(\alpha )\cdot f^*(\beta ).$$
Further if $f$ is proper, $f_k: \widetilde{X}_k\rightarrow \widetilde{Y}_k$ is smooth and $\widetilde{X}$, $\widetilde{Y}$ are equidimensional, then there is a push-forward homomorphism
$$f_*: \widehat{\CH}^r(X)\rightarrow \widehat{\CH}^{r-\delta }(Y),\hspace{0.1cm}(\delta :=\dim(X)-\dim(Y)),$$
satisfying the projection formula
$$f_*(f^*(\alpha )\cdot \beta )=\alpha \cdot f_*(\beta ).$$}
\end{itemize}}

\subsection{Arithmetic height pairing}
Let $\widehat {\CH}^*(\widetilde{X})$ be the arithmetic Chow theory as defined above. One can define an \textit{arithmetic degree} map as a push-forward
$$\widehat{deg}_{\widetilde{X}}: \widehat{\CH}^{d+1}(\widetilde{X})\rightarrow \widehat{\CH}^1(S){\cong }\widehat{\rm Cl}(\mathcal {O}_k)\rightarrow \mathbb{R},$$
{where $\widehat{\rm Cl}(\mathcal {O}_k)\rightarrow \mathbb{R}$ is the $\widehat{deg}$ map of Example \ref{E3}.\footnote{{We remark here that this $\widehat{deg}$ is really the composition of the push-forward morphism $\widehat{\CH}^1(S)\rightarrow \widehat{\CH}^1(\Spec(\Z))$ attached to the unique morphism $S\rightarrow \Spec(\Z)$, and the isomorphism $\widehat{\CH}^1(\Spec(\Z))\cong \R$ ( see 2.1.3 of \cite{BGS} for a detailed discussion of arithmetic degree maps).}}} Together with the arithmetic intersection, it defines a pairing
$$\widehat{\CH}^r(\widetilde{X};\mathbb{Q})\otimes \widehat{\CH}^{d-r+1}(\widetilde{X};\mathbb{Q})\rightarrow \widehat{\CH}^{d+1}(\widetilde{X};\mathbb{Q})\rightarrow \mathbb{R}.$$
For a smooth projective variety $X/k$, assume that it has a \textit{regular model} $\widetilde{X}$, i.e., a regular arithmetic variety which is projective and flat over $S:=\Spec(\mathcal{O}_k)$, together with an isomorphism $\widetilde{X}_k\cong X/k$. {Let's explore this a little bit further. We are considering a family $\widetilde{X}\rightarrow S$, where the generic fibre is isomorphic to $X/k$. This resembles the situation $\rho :\X\rightarrow C$ of \S\ref{IM1}. Now for each finite prime $\wp \in S$, we get a finite fibre $\widetilde {X}_{\wp }:=\widetilde {X}\times _S\Spec(\mathcal{O}_k/\wp \mathcal{O}_k)$, and for each embedding $\sigma :k\hookrightarrow \C$, we get a ``fibre at infinity'' $\widetilde{X}_{\sigma }:=\widetilde{X}\times _{\sigma }\C$. We think of the whole family (fibres over finite and infinite primes/embeddings) as a completion of $\widetilde{X}$ over $\overline{S}:=S\cup \{\sigma :k\hookrightarrow \C\}_{\sigma }$, 
resembling the situation $\ol{\rho }:\ol{\X}\rightarrow \ol{C}$ in \S\ref{IM1}. }
{\begin{rmk}\label{IVRm1}
The existence of a regular model for $X$ is a highly non-trivial problem. As a basic example, smooth projective curves have regular models (after possibly extending the base field). Apart from that, triple product of curves (Gross-Schoen) and abelian varieties (K\"unnemann) provides us with a large class of examples.
\end{rmk}}

With this set up and under a further assumption ((17) of \cite{Ku}), Beilinson's height pairing can be interpreted in light of arithmetic intersection
$$\widehat{\CH}^r(\widetilde{X};\mathbb{Q})\otimes \widehat{\CH}^{d-r+1}(\widetilde{X};\mathbb{Q})\rightarrow \widehat{\CH}^{d+1}(\widetilde{X};\mathbb{Q})\rightarrow \mathbb{R}.$$
This pairing {may a priori depend on the choice of $\widetilde{X}$. Since our primary aim is to detect non-trivial cycles, this choice is not a hinderance, once we have one.} To get a more earthly description, we stretch the analogy with Example \ref{E3} even further. Think of $\widetilde {X}$ as a projective scheme ${\rm Proj}\left(\mathcal{O}_k[X_0,\cdots ,X_N]/I\right)\rightarrow S$, for some homogeneous ideal $I$. The height pairing then is a sum of {non-Archimedean} and {Archimedean} parts
$$\langle \xi _1,\xi _2\rangle _{\rm HT}:=\sum _{\wp }\langle \xi _1, \xi _2\rangle _{\wp }+\sum _{\wp _{\infty }}\langle \xi _1,\xi _2\rangle _{\wp _{\infty }},$$
where (at least for finite primes $\wp $ of good reduction) $\langle \xi _1, \xi _2\rangle _{\wp }$ is given by
$$\log N_{\wp }\times \left(\#|(\xi _1\cap \xi _2)_{\wp }|, {\rm counting\hspace{0.1cm}multiplicities}\right),\
{\rm albeit\ heuristically!}$$

On the subgroup of cycles algebraically equivalent to zero, the height pairing is given by the N\'eron-Tate pairing
$$J^r_{\alg}(X)(\overline{k})_{\mathbb{Q}}\times J^{d-r+1}_{\alg}(X)(\overline{k})_{\mathbb{Q}}\rightarrow \mathbb{R},$$
where {$J^{r}_{\alg}(X)(\ol{k})_{\Q} :=\Phi _r(\CH^r_{\alg}(X/\ol{k};\Q))$}, and {$\Phi_r : \CH^r_{\hom}(X/k) \to J\big(H^{2r-1}(X(\C),\Z(r))\big)$} is the Griffiths
Abel-Jacobi map.
\begin{rmk}
Much like the notion of a height function, one can extend the height pairing for a smooth and projective $X$ defined over $\overline{\mathbb{Q}}$ (see 4.0.6 of \cite{Be3}).
\end{rmk}
{\begin{rmk}
The Archimedean part of the height pairing $\sum _{\wp _{\infty }}\langle \xi _1,\xi _2\rangle _{\wp _{\infty }}$ is given by the star product of green currents $g_{\xi _1}$ and $g_{\xi _2}$ (see \S 1 of \cite{MS1} for details). Restricting further to the subgroups $z^r_{rat}(X)$ and $z^{d-r+1}_{rat}(X)$, this Archimedean part of the height pairing is the one defined in \S \ref{S1} (up to factors). 
\end{rmk}}

\section{Bloch-Beilinson filtration}\label{S4}
It was first indicated in \cite{Blo2},  and later fortified by Beilinson, that for $X$ smooth and projective
over a field $k$, there should be a descending filtration
\[
F^0:=CH^r(X;\mathbb{Q})\supset F^1=CH^r_{\hom}(X;\mathbb{Q})\supset \cdots \supset F^r\supset \{0\},
\]
satisfying
\[
Gr_F^{\nu}\CH^r(X;\Q) := \frac{F^{\nu}\CH^r(X;\Q)}{F^{\nu+1}\CH^r(X;\Q})  \simeq \Ext^{\nu}_{\M\M}\big(\Spec(k),h^{2r-\nu}(X)(r))\big),
\]
where $\M\M$ is the conjectural category of mixed motives over $k$. A number of candidate
filtrations have been proposed (names will suffice) by Jannsen \cite{Ja3}, S. Saito \cite{SSa}, M. Saito/M. Asakura \cite{A}, Murre \cite{Mu}, Griffiths-Green
\cite{G-G}, Lewis \cite{Lew2},  Lewis/Kerr \cite{K-L},
Raskind \cite{Ra}, and so forth....  In  the case $k=\C$ we have seen that in the category of MHS,
$\Ext^{\bullet\geq  2}_{\MHS} = 0$, and yet $F^2\CH^r(X;\Q)$ need not be zero (Mumford \cite{M}, Bloch \cite{Blo2}, Lewis (op. cit. and \cite{Lew4}), Schoen
(see \cite{Ja2}), Roitman \cite{Ro}, Griffiths-Green \cite{G-G-P},...).
Even in the case where tr$\deg_{\Q}k = 1$, there are examples from some of the  references (op. cit.)
that $F^2\CH^r(X_k;\Q)\ne 0$. Indeed Beilinson and Bloch have independently conjectured the following:
\begin{conj}\label{BBC} {Let $X/\ol{\Q}$ be smooth and projective. Then
the Griffiths Abel-Jacobi map
$$\Phi _r: \CH^r_{\hom}(X/\ol{\Q};\mathbb{Q})\rightarrow  J(H^{2r-1}(X(\C),\mathbb{Q}(r))),$$
is injective.}
\end{conj}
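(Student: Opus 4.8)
The plan is not to present a proof --- Conjecture \ref{BBC} is one of the central open problems in the arithmetic of algebraic cycles, wide open beyond the classical case $r=1$ --- but to describe the strategy that the conjectural framework of \S\ref{S4} dictates and to isolate the genuine obstruction. The key reduction is this: granting a Bloch--Beilinson filtration $F^{\bullet}$ on $\CH^r(X/\ol{\Q};\Q)$ with $F^1 = \CH^r_{\hom}(X/\ol{\Q};\Q)$, the conjecture is \emph{equivalent} to the vanishing $F^2\CH^r(X/\ol{\Q};\Q) = 0$. Indeed $\Phi_r$ factors through $\gr^1_F$, and one expects $\gr^1_F\CH^r(X;\Q)\simeq \Ext^1_{\M\M}(\Spec(\ol{\Q}),h^{2r-1}(X)(r))$ to inject into $\Ext^1_{\MHS}(\Q(0),H^{2r-1}(X(\C),\Q(r))) = J(H^{2r-1}(X(\C),\Q(r)))$ (a regulator injectivity statement, of the flavour of the Beilinson conjectures, that is special to $\ol{\Q}$); given this, $\ker\Phi_r|_{F^1} = F^2$, and everything reduces to killing $F^2$.

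First I would reduce $F^2\CH^r(X/\ol{\Q};\Q) = 0$ to a purely categorical assertion. Using the expected graded pieces $\gr^{\nu}_F\CH^r(X;\Q)\simeq \Ext^{\nu}_{\M\M}(\Spec(\ol{\Q}),h^{2r-\nu}(X)(r))$, the vanishing follows from the statement that the conjectural category $\M\M(\ol{\Q})$ of mixed motives over $\ol{\Q}$ has cohomological dimension $\le 1$, i.e. $\Ext^{\nu}_{\M\M(\ol{\Q})}(\Q(0),-) = 0$ for all $\nu\ge 2$. This is the precise motivic analogue of the fact $\Ext^{\bullet\ge 2}_{\MHS} = 0$ recalled in \S\ref{S2}, and it is exactly here that the hypothesis $X/\ol{\Q}$ --- rather than $X/\C$ or $X$ over a field of positive transcendence degree --- is indispensable: in those larger settings $F^2$ genuinely fails to vanish, precisely because the relevant higher Yoneda extensions of motives no longer collapse. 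For the Tate--twisted coefficients the needed vanishing is a theorem: by Borel's computation of rational $K$-theory, $\Ext^2_{\M\M(\ol{\Q})}(\Q(0),\Q(n))$ is the weight-$n$ part of $K_{2n-2}(\ol{\Q})\otimes\Q = \varinjlim_F K_{2n-2}(F)\otimes\Q = 0$ for $n\ge 2$, and this is the template for the general case.

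The main obstacle is formidable and twofold. On the foundational side, neither $\M\M$ nor a Bloch--Beilinson filtration with the displayed $\Ext$-description exists unconditionally; one could instead attempt the argument with one of the candidate filtrations of \S\ref{S4} (Jannsen, S.~Saito, Murre, Lewis--Kerr, $\dots$), at the cost of losing the clean reduction to an $\Ext^2$-vanishing and having to supply ad hoc geometric input. On the substantive side, the cohomological-dimension-one assertion for $\M\M(\ol{\Q})$ --- beyond Tate twists --- is of essentially the same depth as the full package of Beilinson--Soul\'e vanishing together with the Beilinson conjectures on special values of $L$-functions; in particular it already subsumes the injectivity of $\Phi_r$ on the algebraically trivial part $\CH^r_{\alg}(X/\ol{\Q};\Q)\to J^r_{\alg}(X)(\ol{\Q})_{\Q}$, itself open for $r\ge 2$. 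Genuine unconditional progress is confined to special situations --- the decomposition of the diagonal following Bloch--Srinivas when $\CH_0(X)$ is supported in low dimension, the case $r=1$ (where $\Phi_1$ is the classical isomorphism onto $\Pic^0(X)(\ol{\Q})_{\Q}$), and the case $r=d=2$ for surfaces with $p_g=0$ for which Bloch's conjecture is known --- and one may hope to substitute the height pairing of \S\ref{S1} and \S\ref{S3} (its conjectural non-degeneracy, Conjecture \ref{Beconj1}, combined with N\'eron--Tate positivity) for a motivic $\Ext^2$-vanishing; but these substitutes are themselves conjectural and of comparable difficulty, so no unconditional proof is on the horizon.
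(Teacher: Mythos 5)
You have correctly recognized that there is nothing here to prove: Conjecture~\ref{BBC} is stated in the paper exactly as an open conjecture of Beilinson and Bloch, with no proof given --- indeed the paper explicitly \emph{assumes} it in \S\ref{S5} (``Throughout this section, we will assume Conjecture~\ref{BBC} and the GHC''). Your analysis is sound and aligns with the framework the paper sets up in \S\ref{S4}: the reduction of the conjecture to the conjunction of ``$F^2\CH^r(X/\ol{\Q};\Q)=0$'' with a regulator injectivity on $\gr^1_F$, and the reinterpretation of the former as a cohomological-dimension-one assertion for $\M\M(\ol{\Q})$ (with Borel's theorem supplying the Tate-twist case), are precisely the standard heuristics behind the filtration the paper constructs via absolute Hodge cohomology and Leray. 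In particular your remark that the hypothesis $X/\ol{\Q}$ is essential --- and that $F^2$ genuinely fails to vanish over larger fields --- is exactly the point the paper makes just before stating the conjecture (citing Mumford, Bloch, Schoen, et al.). Since the paper offers no proof, there is no argument to compare against; your assessment that no unconditional proof is available is the correct conclusion.
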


\begin{rmk} \label{R1}{Assuming the classical Hodge conjecture, one can argue that $X$ in the conjecture can be replaced
by a smooth quasi-projective variety over $\ol{\Q}$. This follows from a weight filtered spectral sequence argument  \cite{K-L}(p. 371).}
\end{rmk}
For $k=\C$, the following theorem best summarizes one's expectations:
First consider fields $\ol{\Q} \subset K\subset \C$, where $K/\ol{\Q}$ is finitely generated. One
first constructs a filtration on $\CH^r(X_K;\Q)$. The ``lift'' from $K$ to $\C$ 
follows from:
\[
F^{\nu}\CH^r(X_{\C};\Q) = \lim_{\buildrel \longrightarrow
\over {K\subset \C}}F^{\nu}\CH^r(X_K;\Q)
\]

\begin{thm}[\cite{Lew2}]\label{LBB} Let $X/K$ be smooth projective 
of dimension $d$.
Then for all $r$, there is a filtration, 
\[
\CH^r(X_K;\Q) = F^0 \supset F^1\supset \cdots \supset F^{\nu}
\supset F^{\nu +1}\supset
\]
\[
 \cdots \supset F^r\supset F^{r+1}
= F^{r+2}=\cdots,
\]
which satisfies the following

\medskip
\noindent
{\rm (i)} $F^1 = \CH^r_{\hom}(X_K;\Q)$.

\medskip
\noindent
{\rm (ii)} $F^2 \subseteq\big( \ker \Phi_r  : \CH^r_{\hom}(X_K;\Q)
\rightarrow J\big(H^{2r-1}(X_K(\C),\Q(r))\big)\big)$.

\medskip
\noindent
{\rm (iii)}
$F^{\nu_1}\CH^{r_1}(X_K;\Q) \bullet F^{\nu_2}\CH^{r_2}(X_K;\Q)  \subset
F^{\nu_1 +\nu_2}\CH^{r_1+r_2}(X_K;\Q)$, where $\bullet$ is
the intersection product.

\medskip
\noindent
{\rm (iv)} $F^{\nu}$ is  preserved under the action of correspondences
between smooth projective varieties over $K$.

\medskip
\noindent
{\rm (v)} Assume that the K\"unneth components of the diagonal
class $[\Delta_X] = \oplus_{p+q = 2d}[\Delta_X(p,q)] \in
H^{2d}(X\times X,\Q(d)))$ are algebraic and defined over $K$.  Then
\[
\Delta_X(2d-2r+\ell+m,2r-\ell-m)_{\ast}\big\vert_{Gr_F^{\nu}\CH^r(X;\Q)}
= \delta_{\ell,\nu}\cdot \text{\rm Identity}.
\]
[If we assume the conjecture that homological and numerical equivalence coincide,
then (v) says that $Gr_F^{\nu}$ factors through the Grothendieck motive.]

\medskip
\noindent
{\rm (vi)} Let $D^r(X_K) := \bigcap_{\nu}F^{\nu}$.
 If Conjecture \ref{BBC} holds
for smooth quasi-projective varieties defined over $\overline{\Q}$ (vis-\`a-vis Remark \ref{R1}),
then $D^r(X_K) = 0$ (hence $D^r(X_{\C}) = 0$).
\end{thm}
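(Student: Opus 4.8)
The plan is to produce the filtration by the method of \emph{spreads}. Since $K/\ol{\Q}$ is finitely generated I would first write $K=\ol{\Q}(S)$ for a smooth affine variety $S/\ol{\Q}$ and fix a smooth projective morphism $\rho\colon\mathcal X\to S$ with generic fibre $\mathcal X_\eta\cong X_K$ (the analog of $\rho\colon\X\to C$ in \S\ref{IM1}); then $\CH^r(X_K;\Q)=\varinjlim_{U\subseteq S}\CH^r(\rho^{-1}(U);\Q)$, the transition maps being flat pullback. The filtration on the left is then pulled back, through the cycle class maps $\CH^r(\rho^{-1}(U);\Q)\to H^{2r}(\rho^{-1}(U)(\C),\Q(r))$, from a filtration on the cohomology of spreads. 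The basic tool is Deligne's theorem --- available because $\rho$ is smooth and projective --- that $R\rho_{U*}\Q$ splits in the derived category as $\bigoplus_q R^q\rho_{U*}\Q[-q]$, giving a canonical, functorial Leray decomposition $H^{2r}(\rho^{-1}(U)(\C),\Q(r))=\bigoplus_{p+q=2r}H^p(U,R^q\rho_{U*}\Q(r))$; write $[\tilde\xi]_{p,q}$ for the components of the class of a spread $\tilde\xi$.

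\textbf{The filtration, and (i)--(ii).} Set $F^0=\CH^r(X_K;\Q)$, and for $\xi\in F^\nu$ (so, after shrinking $U$, the components $[\tilde\xi]_{p,2r-p}$ vanish for $p<\nu$) declare $\xi\in F^{\nu+1}$ when the first surviving component $[\tilde\xi]_{\nu,2r-\nu}\in H^\nu(U,R^{2r-\nu}\rho_{U*}\Q(r))$ --- together with the finer invariant extracted from its mixed Hodge structure, for $\nu=1$ the Abel--Jacobi normal function of $\tilde\xi$ over $U$ --- becomes trivial after further shrinking $U$. For $\nu=0$: $H^0(U,R^{2r}\rho_{U*}\Q(r))$ is the monodromy-invariant part of the cohomology of a fibre, and for small $U$ the class $[\tilde\xi]_{0,2r}$ restricts on the general fibre to the fundamental class of $\xi$ in $H^{2r}(X_K(\C),\Q(r))$, which vanishes exactly when $\xi\in\CH^r_{\hom}(X_K;\Q)$; this is (i). For $\nu=1$: triviality of the normal function of $\tilde\xi$ forces its value at the very general point, which is precisely $\Phi_r(\xi)$, to vanish, which is (ii). Since a single spread over a $(\dim S)$-dimensional base only reaches Leray depth $\dim S$, the key structural point is that past $\nu=\dim S$ one iterates the whole construction, treating each surviving $H^\nu(U,R^{2r-\nu}\rho_{U*}\Q(r))$ --- the cohomology of a variety over $\ol{\Q}$ of strictly smaller complexity, with coefficients in a variation of Hodge structure --- as a new object to spread, peeling off successive higher normal function invariants; a bookkeeping argument then yields $F^{r+1}=F^{r+2}=\cdots$. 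Well-definedness (independence of $\rho$ and of $U$) is checked by comparing two spreads over a common open: their cycle classes differ by classes supported over a proper closed subset of the base, and the localization sequences are compatible with the Leray decomposition.

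\textbf{Properties (iii)--(vi).} Property (iii) is multiplicativity of the Leray filtration: choosing spreads of $\xi_1,\xi_2$ over a common $U$ meeting fibrewise properly gives $[\tilde\xi_1\bullet\tilde\xi_2]=[\tilde\xi_1]\cup[\tilde\xi_2]$, and $L^{\nu_1}\cup L^{\nu_2}\subseteq L^{\nu_1+\nu_2}$ --- a relation inherited by the iterated refinements --- yields $F^{\nu_1}\CH^{r_1}\bullet F^{\nu_2}\CH^{r_2}\subseteq F^{\nu_1+\nu_2}\CH^{r_1+r_2}$. For (iv), a correspondence over $K$ spreads to a correspondence relative to $U$, hence acts level-wise on $R\rho_{U*}\Q$ and on the Leray decomposition and its refinements, so preserves $F^\bullet$. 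For (v), spread the (assumed algebraic, $K$-rational) K\"unneth projectors to relative self-correspondences; on $Gr_F^\nu\CH^r(X_K;\Q)$, cut out inside $H^\nu(U,R^{2r-\nu}\rho_{U*}\Q(r))$ and its iterates, the projector onto the fibre cohomology in degree $2r-\nu$ acts as the identity and all others act as $0$, which is the asserted $\delta_{\ell,\nu}\cdot\mathrm{Id}$. For (vi), $D^r(X_K)=\bigcap_\nu F^\nu$ consists of cycles all of whose iterated spread-invariants vanish; assuming Conjecture \ref{BBC} for smooth quasi-projective varieties over $\ol{\Q}$ (as in Remark \ref{R1}), the iteration reduces the non-triviality of such a $\xi$ to that of a class in $\CH^{r'}_{\hom}(V/\ol{\Q};\Q)$ for a smooth quasi-projective $V/\ol{\Q}$, on which $\Phi_{r'}$ is then injective --- forcing that class, and so (tracing back) $\xi$ itself, to be rationally trivial; thus $D^r(X_K)=0$, and $D^r(X_\C)=0$ follows from $F^\nu\CH^r(X_\C;\Q)=\varinjlim_{K}F^\nu\CH^r(X_K;\Q)$.

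\textbf{Main obstacle.} The hard part is not the formal properties (iii)--(v) but (a) proving well-definedness of every $F^\nu$ --- its insensitivity to the choice of spread and of $U$, which rests on a careful comparison of cycle-class maps of different spreads with the Leray decomposition and the localization sequences --- and, above all, (b) making precise and proving the iteration that pushes the filtration past the geometric depth $\dim S$ up to depth $r$: one must show that the successive higher normal function invariants genuinely organize into a decreasing filtration that stabilises at $F^{r+1}$. Granting that, (vi) is a conditional but essentially formal consequence of Conjecture \ref{BBC}.
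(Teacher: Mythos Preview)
Your spread-and-Leray strategy is the right starting point, but the construction in the paper (following \cite{Lew2}) differs from yours in an essential way. You take ordinary cohomology $H^{2r}(\rho^{-1}(U),\Q(r))$ as the target and then bolt on ``finer invariants from the mixed Hodge structure'' (normal functions) by hand to capture the Abel--Jacobi data needed for (ii). The paper instead uses Beilinson's absolute Hodge cohomology $H^{2r}_{\h}$, which sits in a short exact sequence with $J(H^{2r-1})$ on the left and $\Gamma(H^{2r})$ on the right, so both invariants are packaged uniformly. One passes to the lowest-weight part $\ul{H}^{2r}_{\h}(\X,\Q(r))$ (the image from a smooth compactification $\ol{\X}$) and defines $F^\nu$ as the pullback, along $\cl_r\colon\CH^r(\X/\ol{\Q};\Q)\to\ul{H}^{2r}_{\h}$, of the Leray filtration for $\rho$. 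This is a one-shot definition; the graded piece $E_\infty^{\nu,2r-\nu}(\eta_\Ss)$ already fits in a short exact sequence whose $\ul{\ul{E}}$-part is $\Gamma(H^\nu(\eta_\Ss,R^{2r-\nu}\rho_*\Q(r)))$ and whose $\ul{E}$-part involves $J(W_{-1}H^{\nu-1}(\eta_\Ss,R^{2r-\nu}\rho_*\Q(r)))$. Your proposed recursion (``treat each $H^\nu(U,R^{2r-\nu}\rho_{U*}\Q(r))$ as a new object to spread'') is neither carried out in \cite{Lew2} nor well-posed as written: $U$ is already a variety over $\ol{\Q}$, so there is nothing further to spread.

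The stabilisation $F^{r+1}=F^{r+2}=\cdots$ and part (vi) are where the real content lies, and your ``bookkeeping'' and ``tracing back through the iterations'' are placeholders for an actual argument. The paper supplies a hard-Lefschetz argument (attributed to Beauville, cf.\ \cite{Ra}): for $j\ge1$ there is a commutative square
\[
\begin{matrix}
F^{r+j}\CH^r(\rho^{-1}(U);\Q)&\xrightarrow{\psi_{r+j}}&E_{\infty}^{r+j,r-j}\\
\quad\biggl\downarrow L_X^{d-r+j}&&\wr\,\biggr\downarrow L_X^{d-r+j}\\
F^{r+j}\CH^{d+j}(\rho^{-1}(U);\Q)&\longrightarrow&E_{\infty}^{r+j,2d-r+j}
\end{matrix}
\]
with the right vertical an isomorphism by hard Lefschetz on the fibres. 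Passing to the limit over $U\subset\Ss/\ol{\Q}$, one has $\CH^{d+j}(X_K;\Q)=0$ for $j\ge1$ (since $\dim X_K=d$), so $\psi_{r+j}=0$ there and hence $F^{r+j+1}=\ker\psi_{r+j}=F^{r+j}$. This yields the stabilisation; under Conjecture~\ref{BBC} the map $\cl_r$ is injective, and since the Leray filtration on $\ul{H}^{2r}_{\h}$ at $\eta_\Ss$ is exhausted in finitely many steps, one concludes $D^r=0$.
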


It is instructive to briefly explain how this filtration comes about.
For $X/K$ smooth projective, one can find {a smooth quasi-projective} $\Ss/\ol{\Q}$ such that $\ol{\Q}(\Ss)$ is identified with $K$.
One can then spread out $X/K$ to a family $\rho : \X \to \Ss$, where
$\rho$ is a smooth and proper morphism of smooth quasi-projective
varieties over $\ol{\Q}$, and $X/K$ is the generic fiber. As a momentary digression, we
offer the reader an illuminating illustration of the notion of spreads:

\begin{ex}\label{EX000}{ Let
$$
Y/{\C} = {\Spec}\biggl\{\frac{{\C}[x,y]}{(\pi y^{2} + 
(\sqrt{\pi}+4)x^{3}+ \text{\rm e}x)}\biggr\}.
$$
$$
\Ss/{{\Q}} =  {\Spec}\biggl\{\frac{{\Q}[u,v,w]}{(u-v^{2})}\biggr\},
$$
Set:
$$
{\Y}_{\Ss} =   {\Spec}\biggl\{\frac{{\Q}[x,y,u,v,w]}{
\big(uy^{2} + (v+4)x^{3}+wx,u-v^{2})}\biggr\}
$$
The inclusion
\[
\frac{{\Q}[u,v,w]}{(u-v^{2})} \subset \frac{{\Q}[x,y,u,v,w]}{
\big(uy^{2} + (v+4)x^{3}+wx,u-v^{2})},
\]
defines a morphism ${\Y}_{\Ss}\to \Ss$,
as varieties over ${\Q}$. Let $\eta \in \Ss$, be the generic point. Then
$$
{\Q}(\eta) = \text{\rm Quot}\biggl(\frac{{\Q}[u,v,w]}{(u-v^{2})}\biggr).
$$
Note that the embedding
$$
{\Q}(\eta) \hookrightarrow {\C},\quad (u,v,w) \mapsto 
(\pi,\sqrt{\pi},\text{\rm e}),\
\Rightarrow
{\Y}_{\Ss,\eta}\times {\C} = Y/{\C}.
$$
We will have more to say about this in the next section.}
\end{ex}
 Now here is the key point.  Beilinson's absolute Hodge
cohomology  $H_{\h}$ \cite{Be1}, is a highly sophisticated cohomology theory with a number
of similar  properties to Deligne-Beilinson cohomology, with the advantage of incorporating
weights.  For our purposes here, we need the  short exact sequence { (p.2 of \cite{Be1}):}
\begin{equation}\label{ES}
J\big(H^{2r-1}(\X(\C);\Q(r))\big) \hookrightarrow H_{\h}^{2r}(\X(\C),\Q(r)) \twoheadrightarrow \Gamma\big(H^{2r}(\X(\C),\Q(r))\big).
\end{equation}
There is a cycle class map $\cl_r : \CH^r(\X/\ol{\Q};\Q) \to H_{\h}^{2r}(\X(\C),\Q(r))$, and according to 
Conjecture \ref{BBC} and Remark \ref{R1}, 
one anticipates that $\cl_r$ is injective. The lowest weight part, $\ul{H}^{2r}_{\mathcal H}(\X(\C),\Q(r))
\subset {H}^{2r}_{\mathcal H}(\X(\C),\Q(r))$ is given by the image
$H^{2r}_{\mathcal H}(\ol{\X}(\C),\Q(r)) \to H^{2r}_{\mathcal H}(\X(\C),\Q(r))$,
where $\ol{\X}/\ol{\Q}$ is a smooth compactification of $\X/\ol{\Q}$. 
Note that {$\CH^r(\ol{\X}/\ol{\Q}) \to \CH^r(\X/\ol{\Q})$}, is surjective{;
likewise there is a cycle class map $\CH^r(\ol{\X};\Q) \to H^{2r}_{\h}(\ol{\X},\Q(r))$.}
Thus we conjecturally have an injection
\[
\cl_r : \CH^r(\X/\ol{\Q};\Q) \to \ul{H}_{\h}^{2r}(\X(\C),\Q(r)).
\]
The  filtration $\F^\nu\CH^r(\X/\ol{\Q};\Q)$ is given by the
pullback of the $\nu$-th Leray filtration of $\rho$ on $\ul{H}^{2r}_{\h}(\X(\C),\Q(r))$, to
$\CH^{r}(\X/\ol{\Q};\Q)$.  (For an excellent motivic description of the Leray filtration, the
reader should consult \cite{Ar}.) Let $\eta_{\Ss}$
be the generic point of $\Ss$, and put $K := \ol{\Q}(\eta_{\Ss})$, and note
that the sequence in (\ref{ES}) remains exact at the generic point, by properties of direct limits. Write
$X_K := \X_{\eta_{\Ss}}$. The injectivity of $\cl_r$ {passes to the
generic point of $\Ss$, viz., $\cl_r : \CH^r(\X_{\eta_{\Ss}};\Q) \hookrightarrow \ul{H}_{\h}^{2r}(\X_{\eta_{\Ss}};(\C),\Q(r))$,
 leading to a filtration $\{F^{\nu}\CH^r(X_K;\Q)\}_{\nu\geq 0}$.}
Thus following  \cite{Lew2} we introduced a 
decreasing filtration $F^\nu\CH^r(X_K;\Q)$, with the
property that   $Gr_F^\nu\CH^r(X_K;\Q) \hookrightarrow 
E_{\infty}^{\nu,2r-\nu}(\eta_{\Ss})$, where $E_{\infty}^{\nu,2r-\nu}(\eta_{\Ss})$
is the $\nu$-th graded piece of the Leray filtration associated to $\rho$ on
$\ul{H}^{2r}_{\mathcal H}(\X_{\eta_{\Ss}},\Q(r))$ 
It is proven in \cite{Lew2} that the term $E_{\infty}^{\nu,2r-\nu}(\eta_{\Ss})$
fits in a short exact sequence:
$$
0\to \ul{E}_{\infty}^{\nu,2r-\nu}(\eta_{\Ss}) \to E_{\infty}^{\nu,2r-\nu}(\eta_{\Ss})
\to \ul{\ul{E}}_{\infty}^{\nu,2r-\nu}(\eta_{\Ss}) \to 0,
$$
where
\[
\ul{\ul{E}}_{\infty}^{\nu,2r-\nu}(\eta_{\Ss}) =
\Gamma(H^\nu(\eta_{\Ss},R^{2r-\nu}\rho_{\ast}\Q(r))),\quad \ul{E}_{\infty}^{\nu,2r-\nu}(\eta_{\Ss}) =
\]
\[
\frac{J(
W_{-1}H^{\nu-1}(\eta_{\Ss,}R^{2r-\nu}\rho_{\ast}\Q(r)))}{\Gamma(Gr_W^0H^{\nu-1}
(\eta_{\Ss},R^{2r-\nu}\rho_{\ast}\Q(r)))}
\subset J(H^{\nu-1}(\eta_{\Ss},R^{2r-\nu}\rho_{\ast}\Q(r))).
\]
Here the latter inclusion is a result of the short exact sequence:
\[
0\to W_{-1}H^{\nu-1}(\eta_{\Ss},R^{2r-\nu}\rho_{\ast}\Q(r)) \to
W_0H^{\nu-1}(\eta_{\Ss},R^{2r-\nu}\rho_{\ast}\Q(r))
\]
\[
\to Gr_W^0H^{\nu-1}(\eta_{\Ss},R^{2r-\nu}\rho_{\ast}\Q(r))\to 0.
\]
 {We attend to (vi). The idea comes from \cite{Ra}; however, as noted in \cite{Ra}, it goes
 back to  a hard Lefschetz argument due to Beauville (see \cite{Lew2}). This will imply that
$D^r(X) := D^{r}(X_K) = 0$ under Conjecture \ref{BBC} for smooth quasi-projective varieties. 
It is instructive to the reader to explain this argument. 
It suffices to show that 
\begin{equation}\label{E000}
\lim_{{\buildrel\longrightarrow\over{U\subset \Ss/\ol{\Q}}}}F^{r+1}\CH^r(\rho^{-1}(U);\Q)) =:  F^{r+1}\CH^r(X_K;\Q))
\end{equation}
\[
 =
\lim_{{\buildrel\longrightarrow\over{U\subset \Ss/\ol{\Q}}}}F^{r+j}\CH^r(\rho^{-1}(U);\Q)) =: F^{r+j}\CH^r(X_K;\Q),\ {\rm for \ all}\ j\geq 1.
\]
Let $L_X$ be the operation of cupping with the hyperplane class of the fibers of $\rho$, and 
\[
\psi_{r+j}: F^{r+j}\CH^r(\rho^{-1}(U);\Q))  \to E_{\infty}^{r+j,r-j},
\]
 the natural map. There is a commutative diagram
 \[
 \begin{matrix}
F^{r+j}\CH^r(\rho^{-1}(U);\Q))&\xrightarrow{\psi_{r+j}}&E_{\infty}^{r+j,r-j}\\
\quad\quad\biggl\downarrow L_X^{d -r+j}&&\quad\wr \biggr\downarrow L_X^{d -r+j}\\
F^{r+j}\CH^{d+j}(\rho^{-1}(U;\Q))&\to&E_{\infty}^{r+j,2d-r+j}
\end{matrix}
\]
Since $F^{r+j+1}\CH^{r+j}(\rho^{-1}(U);\Q)) = \ker \psi_{r+j}$, and that 
\[
\lim_{{\buildrel\longrightarrow\over{U\subset \Ss/\ol{\Q}}}}CH^{d+j}(\rho^{-1}(U);\Q)) = \CH^{d+j}(X_K;\Q) = 0,
\]
 for $j\geq 1$, as $\dim X_K = d$,
it follows that (\ref{E000}) holds, and we're done.}

\section{{The business of spreads}}

{Consider the smooth elliptic curve 
\[
E_{\C} :=  {\rm Proj}\biggl(\frac{\C[z_0,z_1,z_2]}{(2{\rm e}z_0z_2^2 -\pi z_1^3 +\sqrt{\pi}z_1z_0^2 +\sqrt{3}\I z_0^3)} \biggr)\subset \PP^2(\C)
\]
An analytic geometer may view this as a compact Riemann surface endowed with the analytic
topology. If for the moment we view $E$ as a prototypical projective algebraic manifold, then one 
key distinguishing feature that $E$ (or for that matter any complex algebraic variety) 
has over general complex manifolds, is that it can be arrived at via base extension
from a smaller subfield $K\subset \C$. There are lots of choices for $K$, but it is customary to think of it as 
finitely generated over $\Q$. To muddy the water a bit, let's consider $\xi = \{\sqrt{5}z_0^4z_1 + \I z_2^5 = 0\} \cap E_{\C} \in \CH^1(E_{\C})$.
 In this case, let's choose $K = \Q(\sqrt{\pi},{\rm e},\I,\sqrt{3},\sqrt{5})$, and define
\[
E_K :=  {\rm Proj}\biggl(\frac{K[z_0,z_1,z_2]}{(2^{-1}{\rm e}z_0z_2^2 -\pi z_1^3 +\sqrt{\pi}z_1z_0^2 +\sqrt{3}\I z_0^3)} \biggr)\subset \PP^2_K.
\]
By base change, we have $E_{\C} = E_K\times_K\C := E_K\times_{\Spec(K)}\Spec(\C)$. Now $K$ itself can be representative of the
process of evaluation of a general point over $\Q$. Let 
\[
\Ss = \Spec\biggl(\frac{\Q[u,v,w,t,s]}{(w^2+1,t^2-3,s^2-5)}\biggr).
\]
Let $\eta_{\Ss}\in \Ss/\Q$ be the generic point. Note  that by definition $\Q(\eta_{\Ss}) = \Q(\Ss)$ and that the evaluation map
\begin{equation}\label{EQ000}
\Q(\Ss) \hookrightarrow \C,\ (u,v,w,t,s) \mapsto  p := ({\rm e},\sqrt{\pi}, \I,\sqrt{3},\sqrt{5}) \in \Ss(\C),
\end{equation}
identifies $\Q(\Ss)$ with $K$. Any other point $q\in \Ss(\C)$ for which evaluation defines an embedding
$K\hookrightarrow \C$ is called a general point of $\Ss$. Now consider the quasi-projective variety $\mathcal{E}_{\Q}$
defined by
\[
 \left\{ \begin{matrix} 2^{-1}uz_0z_2^2 -v^2 z_1^3 
+v z_1z_0^2+ wtz_0^3 = 0\\
w^2+1= t^2-3= s^2-5 = 0\end{matrix}\right\} \subset \PP_{\Q}^2\times_{\Q}\Spec(\Q[ u,v,w,t,s]).
\]
Likewise, $\xi$ has an obvious spread $\tilde{\xi}$ given by
\[
 \left\{ \begin{matrix} 2^{-1}uz_0z_2^2 -v^2 z_1^3 
+v z_1z_0^2+ wtz_0^3 = 0\\
sz_0^4z_1 +wz_2^5 = 0\\
w^2+1= t^2-3= s^2-5 = 0\end{matrix}\right\}.
\]
As in Example \ref{EX000}, there is a morphism
 $\rho : \mathcal{E}_{\Q} \to \Ss_{\Q}$.
Then  $\tilde{\xi}\in \CH^1(\mathcal{E}_{\Q})$.
Indeed $\tilde{\xi}_{\eta_{\Ss}} = \xi\in \CH^1(E_K;\Q)$, where $\mathcal{E}_{\eta_{\Ss}}$ is identified with $E_K$,
 under the embedding given in (\ref{EQ000}).  Let us also view $\rho : \mathcal{E}(\C) \to \Ss(\C)$ as the induced map of
 complex spaces.
 The datum associated to the Leray sheaf $R^{\bullet}\rho_*\Q$ amounts to an arithmetic variation of  Hodge structure, and these ideas
 have played  a big role in constructing algebraic invariants associated to Chow groups of algebraic cycles, as for example  seen in the
 previous section. The  reader should also consult \cite{A}, \cite{G-G} and \cite{Lew4} as further exploitation of these ideas. A different
 line of enquiry involving spreads can be found in \cite{V}.
 Finally one can also spread $\mathcal{E}$ over $\Z$, by including the equation $2x-1=0$. This leads to an arithmetic scheme over ${\Z}$
 where the business of height pairings can be addressed.}

 \section{Intermezzo II}
 
 At this point, it should be reasonably clear to the reader that the notion of
 a height pairing  of the form 
\begin{equation}
F^{\nu}\CH^r(X_K;\Q) \times F^{\nu}\CH^{d-r+\nu}(X_K;\Q) \to \R,
\end{equation}
generalizing (\ref{BHT}), and providing a ``polarization'' on ``primitive'' pieces of $Gr_F^{\nu}\CH^r(X;\Q)$,
much the same way as with the Hodge-Riemann bilinear relations on the primitive cohomology of
a projective algebraic manifold, should exist.  Here $K$ is finitely generated over $\ol{\Q}$. As we will see below,
there is the technical requirement that $K$ have transcendence degree $\nu-1$
over $\Q$, $\nu\geq 1$. Unfortunately, a proof of such a pairing seems elusive at this given time,
and so we were forced to make further concessions (\S\ref{S5}).

\medskip
The relevance of these ideas  should be clear. The idea of attaching a conjecturally non-degenerate pairing
on graded pieces of the Bloch-Beilinson filtration is a unique new idea that
is at the cross roads of arithmetic, Arakelov geometry and Hodge theory. At the heart of the notion
of a height pairing of two cycles, is the idea of ``spreading'' a cycle out so as to form an intersection
pairing, very similar to the aforementioned idea of defining a linking number of two disjoint curves in 3-space, where one
curve bounds a membrane, thus creating an intersection number with the other curve. In (co-)homology theory,
it is often the case that to determine whether a (co-)cycle is non-zero, is via an intersection/cup product with
a complementary dimensional cycle. The ``definite'' properties of the N\'eron-Tate pairing (and
conjecturally that of the Beilinson pairing) should convince one that this technology may lead to similar
role in detecting the non-triviality of a \underbar{specific}  ``interesting'' algebraic cycle.

\section{A new pairing}\label{S5}
{\em Throughout this section, we will assume Conjecture \ref{BBC} and the GHC.}

 \medskip
If $\ol{\Q}$ of the previous section is replaced by $K$, a field of finite transcendence degree over 
$\overline{\mathbb{Q}}$, then as alluded to earlier, Conjecture \ref{BBC} is false. However
as indicated in  \S\ref{S4},  the notion of a conjectural Bloch-Beilinson filtration
involves spreads, which is key to a generalized pairing. We will continue with the notation
of \S\ref{S4}, with $K = \ol{\Q}(\Ss)$ finitely generated over $\ol{\Q}$.
Let $Gr^{\nu }_FCH^r(X_K;\mathbb{Q})$ denote the graded pieces of the filtration, we 
have a non-canonical motivic decomposition (albeit $Gr^{\nu }_F\CH^r(X_K;\mathbb{Q})$ is unique)
\[
\CH^r(X_K;\mathbb{Q}) = \bigoplus_{\nu\geq 0}\Delta_{X_K}(2d-2r+\nu,2r-\nu)_*\CH^r(X_K;\Q)
\]
\[
\simeq \bigoplus _{\nu \geq 0}Gr^{\nu }_F\CH^r(X_K;\mathbb{Q}),
\]
much like the Hodge decomposition of the {de Rham} cohomology of $X$.\\
Now if $X/\overline{\mathbb{Q}}$ is smooth projective, in light of Conjecture \ref{BBC}, Beilinson's 
height pairing could be interpreted as a pairing on $Gr^1_F$. In \cite{S-G}, we obtained the following 
extension of Beilinson's pairing for higher graded pieces:
\begin{thm}\label{IIITh1}
Let $X/\overline {\mathbb {Q}}$ be a smooth projective variety of dimension $d$ and 
let $K/\overline {\mathbb {Q}}$ be a finitely generated overfield of transcendence degree $\nu -1$, 
where $\nu \geq 1$ is an integer. Then there exists a pairing
$$\langle\hspace{0.1cm},\hspace{0.1cm}\rangle_{\rm HT}^{{\nu}} : Gr^{\nu }_{F}
\CH^r(X_{K};\mathbb {Q})\times Gr^{\nu }_{F}\CH^{d-r+\nu }(X_{K};\mathbb {Q})\rightarrow \mathbb{R},$$
extending Beilinson's height pairing.
\end{thm}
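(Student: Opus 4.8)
The plan is to reduce the construction of $\langle\ ,\ \rangle_{\rm HT}^{\nu}$ on $Gr^{\nu}_F\CH^r(X_K;\Q)$ to the situation of \S\ref{IM1} and \S\ref{S3} by spreading everything out. Write $K=\ol{\Q}(\Ss)$ with $\Ss/\ol{\Q}$ smooth quasi-projective of dimension $\nu-1$, and spread $X/K$ out to a smooth proper family $\rho:\X\to\Ss$ over $\ol{\Q}$, with a smooth compactification $\ol{\rho}:\ol{\X}\to\ol{\Ss}$, exactly as in \S\ref{S4}. A class $\xi\in Gr^{\nu}_F\CH^r(X_K;\Q)$ is represented (after passing to the limit over open $U\subset\Ss$) by a spread $\tilde{\xi}\in\CH^r(\X/\ol{\Q};\Q)$ whose image $\psi_{\nu}(\tilde{\xi})$ lands in the graded piece $E_{\infty}^{\nu,2r-\nu}(\eta_{\Ss})$ of the Leray filtration. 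First I would pin down where this class actually lives: using the short exact sequence in \S\ref{S4} and the fact (affine Lefschetz, since $\dim\Ss=\nu-1$) that only the bottom-weight part survives, one gets that the relevant invariant of $\tilde{\xi}$ sits in
\[
W_0H^{\nu-1}(\eta_{\Ss},R^{2r-\nu}\rho_*\Q(r)) = H^{\nu-1}(\ol{\Ss},j_*R^{2r-\nu}\ol{\rho}_*\Q(r)),
\]
the intersection-cohomology object of Zucker type that appeared in \S\ref{IM1}. This is the step that uses the transcendence-degree hypothesis $\mathrm{tr}\deg_{\ol{\Q}}K=\nu-1$ in an essential way: the cohomological degree $\nu$ exceeds $\dim\Ss=\nu-1$, so that the would-be obstruction cup product $H^{\nu}\otimes H^{\nu}\to H^{2\nu-2}(R^{2d}\rho_*\Q(d))$ vanishes on the open part and the pairing is forced to concentrate at the boundary.

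Next I would define the pairing. Given $\xi_1\in Gr^{\nu}_F\CH^r(X_K;\Q)$ and $\xi_2\in Gr^{\nu}_F\CH^{d-r+\nu}(X_K;\Q)$, form spreads $\tilde{\xi}_1,\tilde{\xi}_2$ and take their Beilinson-style intersection product in the absolute Hodge cohomology $H_{\h}^{\bullet}(\ol{\X}(\C),\Q(\bullet))$ of the compactified total space, using property (iii) of Theorem \ref{LBB} (compatibility of $F^{\bullet}$ with the intersection product) to see the product lands in $F^{2\nu}\CH^{d+\nu}(\ol{\X};\Q)$, hence maps into the Leray graded piece
\[
H^{2\nu}\bigl(\ol{\Ss},\,j_*R^{2d}\ol{\rho}_*\Q(d)\bigr) \simeq H^{2\nu-2}(\ol{\Ss},\Q) \xrightarrow{\ \deg\ } \Q,
\]
using $\dim\ol{\Ss}=\nu-1$ and the non-degenerate Poincaré/intersection pairing of Zucker quoted in \S\ref{IM1}. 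Composing with the arithmetic degree map of \S\ref{S3} at the base level (the height machinery for the arithmetic variety obtained by spreading $\Ss$ over $\Z$, as at the end of \S\ref{S5}'s predecessor section) produces the real number $\langle\xi_1,\xi_2\rangle_{\rm HT}^{\nu}$. For $\nu=1$ the base $\Ss$ is a point over $\ol{\Q}$, $\ol{\Ss}=\Spec\ol{\Q}$, the intersection-cohomology step is vacuous, and one recovers exactly Beilinson's pairing \eqref{BHT}; this gives the claimed extension.

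The main obstacle is well-definedness: showing the pairing is independent of the choice of spread $\Ss$, of the family $\rho$, of the compactification $\ol{\X}$, and of the representatives $\tilde{\xi}_i$ of the given $Gr^{\nu}_F$-classes. For the representatives, one uses that changing $\tilde{\xi}_i$ within its class changes it by something whose Leray invariant lies in a higher step $F^{\nu+1}$ (equivalently, in the kernel of $\psi_{\nu}$), and then property (iii) of Theorem \ref{LBB} together with the vanishing $\CH^{d+j}(X_K;\Q)=0$ for $j\ge1$ (the argument proving (vi)) forces the extra contribution to die in $H^{2\nu-2}(\ol{\Ss},\Q)$; here Conjecture \ref{BBC} and the GHC, assumed throughout this section, are what guarantee the filtration behaves well enough (in particular that $Gr^{\nu}_F$ is detected faithfully by the Leray graded piece) for this cancellation to be unconditional. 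For independence of $\Ss$ one passes to a common refinement: any two spreads are dominated by a third, and a standard direct-limit/functoriality argument for the Leray filtration and for absolute Hodge cohomology shows the resulting real numbers agree; this is routine but bookkeeping-heavy. The reciprocity (symmetry) of the pairing, if desired, follows as in Proposition \ref{PKK} from Weil reciprocity applied fiberwise, now over the higher-dimensional base $\Ss$, bootstrapped through the generalization in \cite{GS}.
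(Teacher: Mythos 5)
Your proposal takes a genuinely different route from the paper, and it has several gaps that I do not think can be repaired without essentially rewriting it.

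The paper's proof exploits a crucial simplification that your proposal misses: since $X$ is already defined over $\ol{\Q}$, the spread of $X_K$ over $S$ (where $K=\ol{\Q}(S)$, $S$ smooth \emph{projective} of dimension $\nu-1$) is literally the product family $\mathrm{Pr}_S: S\times X\to S$. This makes $S\times X$ a smooth projective variety over $\ol{\Q}$ of dimension $d+\nu-1$, and the key arithmetic observation is that $d-r+\nu = (d+\nu-1)-r+1$, so that the codimensions $r$ and $d-r+\nu$ are exactly the complementary codimensions for Beilinson's \emph{original} height pairing \eqref{BHT} on $S\times X$. The rest of the paper's proof is then a linear-algebra reduction: using the algebraic K\"unneth projector $\Delta_S\otimes\Delta_X(2d-2r+\nu,2r-\nu)$ (whose algebraicity rests on GHC/HC) and the injectivity of $\Phi_r$ on $\CH^r_{\hom}((S\times X)_{\ol{\Q}};\Q)$ given by Conjecture \ref{BBC}, one constructs an isomorphism $\Xi_1\cong Gr^\nu_F\CH^r(X_K;\Q)$ with $\Xi_1\subset\CH^r_{\hom}((S\times X)_{\ol{\Q}};\Q)$, and similarly $\Xi_2\cong Gr^\nu_F\CH^{d-r+\nu}(X_K;\Q)$. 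The desired pairing is then just Beilinson's pairing restricted to $\Xi_1\times\Xi_2$. No Leray spectral sequence or intersection cohomology over the base is needed in the construction; Proposition \ref{LLL} (the injection into $J(H_0)$) is used to identify the relevant sub-Hodge structure, and BBC gives $F^2\CH^r((S\times X)_{\ol{\Q}};\Q)=0$ so that $\Xi_1$ does not depend on the algebraic representative $w_1$ of the projector.

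Your proposal instead tries to generalize the Intermezzo I picture (Zucker-type intersection cohomology over a base curve) to a $(\nu-1)$-dimensional base, cup in absolute Hodge cohomology over $\ol{\X}$, collapse to the base, and then perform an arithmetic degree step. Several things go wrong. First, the degree bookkeeping is off: cupping two classes in $H^{\nu-1}(\ol{\Ss},j_*R^\bullet)$ lands in $H^{2\nu-2}$, not $H^{2\nu}$; on a $(\nu-1)$-dimensional projective $\ol{\Ss}$ the group $H^{2\nu}(\ol{\Ss},-)$ vanishes, so as written your intermediate target is zero. Second, even with the corrected degree, the cup-product pairing $H^{\nu-1}\otimes H^{\nu-1}\to H^{2\nu-2}(\ol{\Ss},\Q)\simeq\Q$ is a \emph{rational}-valued topological pairing; once you have collapsed to $\Q$ there is no remaining degree of freedom for a further Archimedean/arithmetic contribution, so the subsequent ``spread $\Ss$ over $\Z$ and apply the arithmetic degree'' step has nothing to act on. Beilinson's height pairing over a number field cannot be factored through a $\Q$-valued geometric pairing in this way; its Archimedean part is a genuinely analytic (star product of Green currents) quantity. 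Third, your well-definedness argument conflates the hard-Lefschetz vanishing used for $D^r(X_K)=0$ in Theorem \ref{LBB}(vi) with independence of the chosen representative; the paper's mechanism is simpler and different, namely $F^2\CH^r((S\times X)_{\ol{\Q}};\Q)=0$ under BBC forces $\Xi_1$ to be canonically determined. Finally, your approach never actually makes contact with Beilinson's pairing \eqref{BHT}---you would have to establish compatibility of the composite construction with it, which is not addressed.

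In short, the paper proves the theorem by reducing to the already-constructed Beilinson pairing on the single projective $\ol{\Q}$-variety $S\times X$ of dimension $d+\nu-1$; the Leray and intersection-cohomology machinery of Intermezzo I plays no role in the actual construction. To salvage your route you would need to (a) fix the cohomological degrees, (b) replace the $\Q$-valued cup product with a lift to arithmetic Chow groups or Green currents \emph{before} collapsing, and (c) prove compatibility with the classical pairing---at which point you would essentially be re-deriving the paper's proof via the arithmetic-intersection formalism of \S\ref{S3}.
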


\begin{proof} (Sketch only.)
First note that $K\cong \overline {\mathbb {Q}}(S)$ where $S/\overline {\mathbb {Q}}$ is a smooth projective variety of dimension $\nu -1$ and let $\eta _{S}$ be the generic point of $S$. In this case $\rho : \X \to \Ss$ is given by $Pr_S:S\times X\rightarrow S$. We have the short exact sequence at the generic point
$$0\rightarrow \underline {E}^{\nu , 2r-\nu }_{\infty }(\eta _S)\rightarrow E^{\nu ,2r-\nu }_{\infty }(\eta _S)\rightarrow \underline{\underline{E}}^{\nu , 2r-\nu }_{\infty }(\eta _S)\rightarrow 0,$$
where
$$\underline{\underline{E}}^{\nu ,2r-\nu }_{\infty }(\eta _S)=\Gamma \left(H^{\nu }(\eta _S,R^{2r-\nu }\rho _*\mathbb {Q}(r))\right)=0\hspace{0.1cm}$$
by the affine Lefschetz theorem and
$$\underline {E}^{\nu ,2r-\nu }_{\infty }(\eta _S)= \frac{J\left(W_{-1}\left(H^{\nu -1}(\eta _S,\mathbb {Q})\otimes H^{2r-\nu }(X,\mathbb {Q})\right)(r)\right)}{\Gamma \left(Gr^0_W\left(H^{\nu -1}(\eta _S,\mathbb {Q})\otimes H^{2r-\nu }(X,\mathbb {Q})\right)(r)\right)}\hspace{0.1cm}.$$
We also have $Gr^{\nu }_FCH^r(X_K;\mathbb{Q})\hookrightarrow \underline {E}^{\nu ,2r-\nu }_{\infty }(\eta _S)$.\\

The following two propositions are key to the proof.
\begin{prop}[ \cite{Lew3}]\label{LLL}
There is an injective map
$$Gr^\nu _{F}\CH^r(X_{K};\mathbb {Q})\hookrightarrow J(H_{0}).$$
Here $J(H_{0})$ denotes the jacobian of the pure Hodge structure $H_{0}$ defined by
$$H_{0} :=\left(\frac{H^{\nu -1}(S,\mathbb {Q})}{N^1_{\overline {\mathbb {Q}}}H^{\nu -1}(S,\mathbb {Q})} \otimes\frac {H^{2r-\nu }(X,\mathbb {Q})}{N^{r-\nu +1}_{H}H^{2r-\nu }(X,\mathbb {Q})}\right)(r)\hspace {0.1cm}.$$
\end{prop}

Rather than explain the details of the proof of Proposition \ref{LLL}, the main
philosophical point is the expectation\footnote{This is also apparent in  the work of Shuji Saito \cite{SSa}.} that
\[
\Ext^{\nu}_{\M\M}\big(\Spec(K),h^{2r-\nu}(X_K)(r)\big) \simeq
 \Ext^{\nu}_{\M\M}\biggl(\Spec(K),\frac{h^{2r-\nu}(X_K)(r)}{N_K^{r-\nu+1}}\biggr).
 \]
Unfortunately, any attempt to extend Proposition \ref{LLL} beyond $X = X_{\ol{\Q}}$, viz., to $X_K$, involving a twisted
spread $\X \to \Ss$, $\ol{\Q}(\Ss)=K$,  seems highly non-trivial. 
Next,
\begin{prop}[Lewis]
There is a surjective map
$$\CH^r_{\hom}((S\times X)_{\overline{\mathbb{Q}}};\mathbb{Q})\twoheadrightarrow Gr^{\nu }_F\CH^r(X_K;\mathbb{Q})$$
given by the projector $\Delta _S\otimes \Delta _X(2d-2r+\nu ,2r-\nu )$.
\end{prop}

\begin{proof}
{First of all we observe that 
$\CH^r(S \times_{\overline\Q} X) \twoheadrightarrow  \CH^r(X_K)$ is surjective.
Therefore by Theorem \ref{LBB}(v), the composite involving the full Chow group:
\[
\CH^r(S \times_{\overline\Q} X;\Q)  \xrightarrow{({\rm Id}\otimes \Delta_{X/\ol{\Q}}
(2d-2r+\nu,2r-\nu))_{\ast}}
\]
\[ \Delta_{X_K}(2d-2r+\nu,2r-\nu)_{\ast}\CH^r(X_K;\Q) \simeq
Gr_F^{\nu}\CH^r(X_K;\Q),
\]
is surjective. 
Now  for any smooth affine subvariety $U \subset S/\ol{\Q}$ of dimension $< \nu$,
the affine Lefschetz theorem implies that 
$H^{\nu}(U,\Q)=0$. Applying the K\"unneth formula to $H^{2r}(U\times X,\Q(r))$,
it follows that 
\[
({\rm Id}\otimes \Delta_{X/\ol{\Q}}(2d-2r+\nu,2r-\nu))_{\ast}
\CH^r(U \times_{\overline\Q} X;\Q)  \mapsto 0 \in H^{2r}(U\times X,\Q(r)),
\]
and hence accordingly, 
\[
\CH_{\hom}^r(U \times_{\overline\Q} X;\Q)  \xrightarrow{({\rm Id}\otimes \Delta_{X/\ol{\Q}}
(2d-2r+\nu,2r-\nu))_{\ast}}
\]
\[ \Delta_{X_K}(2d-2r+\nu,2r-\nu)_{\ast}\CH^r(X_K;\Q) \simeq
Gr_F^{\nu}\CH^r(X_K;\Q),
\]
is surjective. Finally 
\[
\CH_{\hom}^r(S \times_{\overline\Q} X;\Q) \twoheadrightarrow  \CH_{\hom}^r(U\times_{\ol{\Q}}X;\Q),
\]
is surjective by the Hodge conjecture, and the proposition follows.}
\end{proof}

By our assumptions, 
$$\Phi _r :\ \CH^r_{\hom}((S\times X)_{\mathbb{\ol{Q}}};\mathbb{Q})\hookrightarrow J\big(H^{2r-1}(S\times X,\mathbb{Q}(r))\big).$$
We have the following decomposition at the level of jacobians.
$$J(H^{2r-1}(S\times X,\mathbb{{Q}}(r)))\cong J(H_0)\oplus J(H^{\perp }_0),$$
where $H^{\perp }_0$ arises due to polarization. Let $P_1$ be the projector
$$H^{2r-1}(S\times X , \mathbb {Q}(r))\twoheadrightarrow H_0,$$
and $w_1$ be an algebraic cycle {lying in the K\"unneth component 
\[
H^{2(d+\nu-r)-1}(S\times X,\Q(d+\nu-r))\otimes H^{2r-1}(S\times X,\Q(r))
\]}
 corresponding to it.  Let 
\[
\Xi _1:=w_{1,*}(\CH^r_{\hom}((S\times X)_{\overline{\mathbb{Q}}};\mathbb{Q})).
\]
 Since we are assuming Conjecture \ref{BBC}, $F^2CH^r((S\times X)_{\overline{\mathbb{Q}}};\mathbb{Q})=0$ and $\Xi _1$ is independent of the choice of algebraic cycle  representative corresponding to $P_1$. Viewing everything inside the jacobian, we get 
$$P_{1,*}: \CH^r_{\hom}((S\times X)_{\overline{\mathbb{Q}}};\mathbb{Q})\twoheadrightarrow Gr^{\nu }_F\CH^r(X_K;\mathbb{Q})$$
and
$$\Phi _r|_{\Xi _1} : \Xi _1\cong Gr^{\nu }_F\CH^r(X_K;\mathbb{Q}).$$
By a similar procedure, we get $\Xi _2:=w_{2,*}(\CH^{d-r+\nu }_{\hom}((S\times X)_{\overline{\mathbb{Q}}};\mathbb{Q}))$, for an algebraic cycle $w_2$ (similar to $w_1$), and an isomorphism
$$\Phi _{d-r+\nu }|_{\Xi _2} : \Xi _2\cong Gr^{\nu }_F\CH^{d-r+\nu }({X_K};\mathbb{Q}).$$
Note that $d-r+\nu =(d+\nu -1)-r+1$ and we have Beilinson's height pairing
$$\CH^r_{\hom}((S\times X)_{\overline{\mathbb{Q}}};\mathbb{Q})\times \CH^{d-r+\nu }_{\hom}((S\times X)_{\overline{\mathbb{Q}}};\mathbb{Q})\rightarrow \mathbb{R},$$
and hence between $\Xi _1$ and $\Xi _2$. The desired pairing {$\langle\hspace{0.1cm},\hspace{0.1cm}\rangle ^{\nu }_{\rm HT}$} between the spaces  $Gr^{\nu }_F\CH^r(X_K;\mathbb{Q})$ and $Gr^{\nu }_F\CH^{d-r+\nu }(X_K;\mathbb{Q})$ is now obtained through the isomorphisms above.
\end{proof}

\begin{rmk}
One can show that the height pairing above is independent of the choice of smooth 
projective variety $S/\overline{\mathbb{Q}}$ with $\overline{\mathbb{Q}}(S)\cong K$.
\end{rmk}

Since our height pairing {$\langle \hspace{0.1cm},\hspace{0.1cm}\rangle ^{\nu }_{\rm HT}$} is given by the one developed by Beilinson, it is only natural 
that the conjectures in \cite{Be3} have a natural extension for graded pieces. As an example:

\begin{prop}\label{HI}
Assume Conjecture \ref{Beconj2} (Hodge-index conjecture) and let $L_{X_K}$ denote the operation of 
intersecting with a hyperplane section. Then for $x\neq 0\in Gr^{\nu }_FCH^r(X_K;\mathbb {Q})$ 
such that $L^{d-2r+\nu +1}_{X_K}(x)=0$, the height pairing
$$(-1)^r\langle x\hspace{0.1cm},\hspace {0.1cm}L^{d-2r+\nu }_{X_K}(x)\hspace {0.1cm}\rangle ^{{\nu }}_{\rm HT}\hspace{0.1cm}>\hspace {0.1cm}0,$$
when $r\leq (d+\nu )/2$. 
\end{prop}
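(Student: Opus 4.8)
The plan is to reduce the Hodge-index statement for $Gr^\nu_F\CH^r(X_K;\Q)$ to Beilinson's original Hodge-index conjecture on the null-homologous cycles of the product variety $(S\times X)_{\ol\Q}$, which has dimension $D := d+\nu-1$, using the machinery assembled in the proof of Theorem \ref{IIITh1}. Recall that there we built an isomorphism $\Phi_r|_{\Xi_1}:\Xi_1 \xrightarrow{\sim} Gr^\nu_F\CH^r(X_K;\Q)$, where $\Xi_1 = w_{1,*}\CH^r_{\hom}((S\times X)_{\ol\Q};\Q)$ sits inside $\CH^r_{\hom}((S\times X)_{\ol\Q};\Q)$ and is cut out by a self-adjoint-up-to-polarization projector $P_1$ onto the ``interesting'' Hodge summand $H_0 \subset H^{2r-1}(S\times X,\Q(r))$; similarly on the other factor with $\Xi_2$ in codimension $d-r+\nu = D-r+1$. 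By construction the pairing $\langle\ ,\ \rangle^\nu_{\rm HT}$ on the graded pieces is, under these isomorphisms, literally the restriction of Beilinson's pairing $\langle\ ,\ \rangle_{\rm HT}$ on $\CH^r_{\hom}((S\times X)_{\ol\Q};\Q)\times \CH^{D-r+1}_{\hom}((S\times X)_{\ol\Q};\Q)$ to $\Xi_1\times\Xi_2$.

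The key steps, in order, would be the following. First, I would observe that the Lefschetz operator behaves well under the spread: if $L_{X_K}$ is cupping with a hyperplane section of $X_K$, then under the identification $X_K = (S\times X)_{\eta_S}$ (fiberwise) it is induced by $L := L_{S\times X}$ on $(S\times X)_{\ol\Q}$ pulled from the $X$-factor (equivalently, one may take the product polarization $L_S + L_X$ on $S\times X$; the $S$-part acts trivially on the relevant graded piece after passing to the generic point, by the same affine-Lefschetz vanishing used in the proof of Theorem \ref{IIITh1}). Thus the condition $L^{d-2r+\nu+1}_{X_K}(x)=0$ translates, via $\Phi_r|_{\Xi_1}$, into the condition that the corresponding class $\tilde x \in \Xi_1$ satisfies $L^{D-2r+1}\tilde x = 0$, i.e. $\tilde x$ is a \emph{primitive} null-homologous $r$-cycle on $S\times X$ of dimension $D = d+\nu-1$, noting $d-2r+\nu+1 = D-2r+1$ and $d-2r+\nu = D-2r$. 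Second, I would check the range: $r \le (d+\nu)/2$ is exactly $r \le (D+1)/2$, which is the hypothesis range of Conjecture \ref{Beconj2} applied to $S\times X$. Third, I would invoke Conjecture \ref{Beconj2} (Hodge-index) for $(S\times X)_{\ol\Q}$: it gives $(-1)^r\langle \tilde x, L^{D-2r}\tilde x\rangle_{\rm HT} > 0$ for $\tilde x \ne 0$ primitive. Fourth, transporting back through the isometry $\Phi_r|_{\Xi_1}$ (and the compatible $\Phi_{d-r+\nu}|_{\Xi_2}$ on the target of $L^{d-2r+\nu}_{X_K}$), and using that $\tilde x \ne 0$ iff $x \ne 0$ since these are isomorphisms, yields $(-1)^r\langle x, L^{d-2r+\nu}_{X_K}(x)\rangle^\nu_{\rm HT} > 0$.

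The step I expect to be the main obstacle is the second one — verifying that the Lefschetz operator, the primitivity condition, and (crucially) the sign $(-1)^r$ match up correctly across the spread construction. There are two subtleties. One is that $\Xi_1$ is the image of a projector $P_1$ built from the polarization; I need that $P_1$ intertwines $L$ on $H^{2r-1}(S\times X)$ with $L$ on $H_0$ so that ``primitive in $Gr^\nu_F$'' corresponds to ``primitive in $\Xi_1$,'' which requires that $H_0$ and $H_0^\perp$ are each $L$-stable sub-Hodge-structures — this follows from $H_0$ being defined by quotienting out coniveau/$N_H$ pieces that are themselves Lefschetz-stable, but it should be spelled out. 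The other is a bookkeeping check that Beilinson's sign convention $(-1)^r$ on primitive $r$-cycles of the $(d+\nu-1)$-fold $S\times X$ reproduces, under $w_{1,*}$ (which changes codimension by the degree of $S$, here $\nu-1$) and under the Künneth projector $\Delta_S\otimes\Delta_X(2d-2r+\nu,2r-\nu)$, exactly the sign $(-1)^r$ demanded for $Gr^\nu_F\CH^r(X_K;\Q)$ — i.e. that the cohomological degree shifts introduced by the $(\nu-1)$-dimensional factor $S$ do not alter the parity. Once these compatibilities are confirmed, the proposition is a formal transport of Conjecture \ref{Beconj2} from $S\times X$ to $X_K$ via the isometries of Theorem \ref{IIITh1}.
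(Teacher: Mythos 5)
Your proposal is correct and takes essentially the same route as the paper: transport $x$ to $x'\in\Xi_1\subset\CH^r_{\hom}((S\times X)_{\ol\Q};\Q)$ via $\Phi_r|_{\Xi_1}$, check that the Lefschetz operator and primitivity condition carry over (the paper does this by an explicit computation showing $\Phi_{d-r+\nu}\bigl(L^{d-2r+\nu}_{S\times X}(x')-w_{2,*}L^{d-2r+\nu}_{S\times X}(x')\bigr)=0$, then invoking Conjecture \ref{BBC} to lift to the Chow level), verify the range $r\le(d+\nu)/2=(\dim(S\times X)+1)/2$, and apply Conjecture \ref{Beconj2} to $(S\times X)_{\ol\Q}$. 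The "subtleties" you flag (Lefschetz-stability of $H_0$, sign bookkeeping) are precisely what the paper's explicit Abel–Jacobi computation handles, so your concerns are well placed but resolvable by the same mechanism.
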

\begin{proof}
First note that the filtration developed in \cite{Lew2} already has the property that $L^{d-2r+\nu }_{X_K}$ 
defines an isomorphism between $Gr^{\nu }_F\CH^r(X_K;\mathbb{Q})$ and 
$Gr^{\nu }_F\CH^{d-r+\nu }(X_K;\mathbb{Q})$, so Proposition \ref{HI} makes sense. Now for any $x\in \Xi _1$
$$\Phi _{d-r+\nu }\left(L^{d-2r+\nu }_{S\times X}(x)-w_{2,*}\circ L^{d-2r+\nu }_{S\times X}(x)\right)$$
$$=[L_{S\times X}]^{d-2r+\nu }(\Phi _r(x))-[w_2]_*\circ [L_{S\times X}]^{d-2r+\nu }(\Phi _r(x))$$
$$=[L_{S\times X}]^{d-2r+\nu }(\Phi _r(x))-[L_{S\times X}]^{d-2r+\nu }(\Phi _r(x))=0.$$
Since we are assuming Conjecture \ref{BBC}, we get
$$L^{d-2r+\nu }_{S\times X}(x)=w_{2,*}\circ L^{d-2r+\nu }_{S\times X}(x),$$
which shows that $L^{d-2r+\nu }_{S\times X}$ maps $\Xi _1$ to $\Xi _2$, isomorphically. 
Further, let $\Xi '_{2}\subset \CH^{d-r+\nu +1}_{\hom}(S\times X;\mathbb{Q})$ be such 
that $\Xi '_{2}\cong Gr^{\nu }_F\CH^{d-r+\nu +1}(X_K;\mathbb{Q})$.\\
For $x'\in \Xi _1$, 

$\Phi _r(x')=x\in Gr^{\nu }_F\CH^r(X_K;\mathbb {Q})\implies \Phi _{d-r+\nu +1}(L^{d-2r+\nu +1}_{S\times X}(x'))=L^{d-2r+\nu +1}_{X_K}(x).$
So, $L^{d-2r+\nu +1}_{X_K}(x)=0\implies L^{d-2r+\nu +1}_{S\times X}(x')=0$. We also have
$$(-1)^r\langle x\hspace{0.1cm},\hspace {0.1cm}L^{d-2r+\nu }_{X_K}(x)\hspace {0.1cm}\rangle ^{{\nu }}_{\rm HT}=(-1)^r\langle x'\hspace{0.1cm},\hspace {0.1cm}L^{d-2r+\nu }_{S\times X}(x')\hspace {0.1cm}\rangle _{\rm HT}\hspace {0.1cm}.$$
Note that $x'\in \Xi _1\subset \CH^r_{\hom}((S\times X)_{\overline{\Q}};\mathbb {Q})$ and $L^{d-2r+\nu +1}_{S\times X}(x')=0$. Now assuming Conjecture \ref{Beconj2} , we conclude
$$(-1)^r\langle x'\hspace{0.1cm},\hspace {0.1cm}L^{d-2r+\nu }_{S\times X}(x')\hspace {0.1cm}\rangle _{\rm HT}\hspace {0.1cm}>\hspace {0.1cm}0\hspace {0.1cm},$$
and Proposition \ref{HI} follows immediately.
\end{proof}

We study the following subspace of $Gr^{\nu }_F\CH^r(X_K;\mathbb{Q})$:
\begin{defn} Let $F^{\nu }\underline {\CH}^r_{\alg}(X_{K};\mathbb {Q}) :=$
$$ F^{\nu }\CH^r(X_{K};\mathbb {Q})\bigcap \left[{\rm Im} (\CH^{r}_{\alg}((S\times X)_{\overline {\mathbb {Q}}};\mathbb {Q})\rightarrow  
\CH^r(X_{K};\mathbb {Q}))\right].$$
Then we define
$$Gr^{\nu }_{F}\underline {\CH}^r_{\alg}(X_{K};\mathbb {Q}) := {\rm Im}\left(F^{\nu }\underline {CH}^r_{alg}(X_{K};\mathbb {Q})\rightarrow Gr^{\nu }_{F}\CH^r(X_{K};\mathbb {Q})\right).$$
\end{defn}

There is one remark in order: If $S'$ is another such variety, then we can dominate both $S$ and $S'$ by a desingularization of
a third $S^{''}\hookrightarrow S\times S'$. From this, and the fact that the rational Chow group of cycles 
algebraically equivalent to zero being a $\mathbb {Q}$ vector space, one can show
$${\rm Im} \left(\CH^{r}_{\alg}((S\times X)_{\overline {\mathbb {Q}}};\mathbb {Q})\rightarrow \CH^r(X_{K};\mathbb {Q})\right)
$$
and
$$  {\rm Im} \left(\CH^{r}_{\alg}((S'\times X)_{\overline {\mathbb {Q}}};\mathbb {Q})\rightarrow CH^r(X_{K};\mathbb {Q})\right),$$
are the same.
Thus the definition of $Gr^{\nu }_{F}\underline {\CH}^r_{\alg}(X_{K};\mathbb {Q})$ is independent of the choice of $S$. Now we have the following

\begin{thm}\label{IIITh2}
Under the same set up as in Theorem \ref{IIITh1} , we have the height pairing
$$\langle\  ,\ \rangle ^{{\nu }}_{\rm HT, {alg}} : Gr^{\nu }_{F}\underline {\CH}^r_{\alg}(X_{K};\mathbb {Q})\times Gr^{\nu }_{F}\underline {\CH}^{d-r+\nu }_{\alg}(X_{K};\mathbb {Q})\rightarrow \mathbb{R} ,$$
extending the N\'eron-Tate pairing.
\end{thm}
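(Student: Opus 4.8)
The plan is to mimic the proof of Theorem \ref{IIITh1}, restricting everything to the subspaces of cycles arising from algebraically trivial cycles on $S\times X$, and at the very last step replacing the appeal to Beilinson's height pairing by the N\'eron--Tate pairing. First I would recall that, by definition, $Gr^{\nu}_F\underline{\CH}^r_{\alg}(X_K;\Q)$ is the image of $F^{\nu}\underline{\CH}^r_{\alg}(X_K;\Q)$ in $Gr^{\nu}_F\CH^r(X_K;\Q)$, and that this subspace is carved out of $Gr^{\nu}_F\CH^r(X_K;\Q)$ by the algebraically trivial part of $\CH^r((S\times X)_{\ol{\Q}};\Q)$ via the surjection $P_{1,*}$ of the second Proposition in the proof of Theorem \ref{IIITh1}. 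The key observation is that the projector $P_1$ (equivalently the algebraic correspondence $w_1$) preserves algebraic equivalence, so that
\[
\Xi_1^{\alg}:=w_{1,*}\big(\CH^r_{\alg}((S\times X)_{\ol{\Q}};\Q)\big)\subseteq \Xi_1
\]
maps isomorphically onto $Gr^{\nu}_F\underline{\CH}^r_{\alg}(X_K;\Q)$ under $\Phi_r|_{\Xi_1}$, exactly as $\Xi_1\cong Gr^{\nu}_F\CH^r(X_K;\Q)$. Symmetrically one defines $\Xi_2^{\alg}:=w_{2,*}\big(\CH^{d-r+\nu}_{\alg}((S\times X)_{\ol{\Q}};\Q)\big)\cong Gr^{\nu}_F\underline{\CH}^{d-r+\nu}_{\alg}(X_K;\Q)$ via $\Phi_{d-r+\nu}|_{\Xi_2}$.

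Next I would invoke the fact, recorded in \S\ref{S3}, that on the subgroup of cycles algebraically equivalent to zero Beilinson's height pairing restricts to the N\'eron--Tate pairing
\[
J^r_{\alg}((S\times X)_{\ol{\Q}})(\ol{\Q})_{\Q}\times J^{(d+\nu-1)-r+1}_{\alg}((S\times X)_{\ol{\Q}})(\ol{\Q})_{\Q}\to \R,
\]
where we use $\dim(S\times X)=d+\nu-1$ and $d-r+\nu=(d+\nu-1)-r+1$, so the bidegrees are complementary. Since $\Xi_1^{\alg}$ and $\Xi_2^{\alg}$ sit inside $\CH^{\bullet}_{\alg}((S\times X)_{\ol{\Q}};\Q)$, and since by Conjecture \ref{BBC} $\Phi_r$ and $\Phi_{d-r+\nu}$ are injective there (so that $\Xi_i^{\alg}$ may be identified with its image in the respective algebraic Jacobian), the N\'eron--Tate pairing restricts to a pairing between $\Xi_1^{\alg}$ and $\Xi_2^{\alg}$. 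Transporting this along the isomorphisms $\Phi_r|_{\Xi_1}$ and $\Phi_{d-r+\nu}|_{\Xi_2}$ produces the desired
\[
\langle\ ,\ \rangle^{\nu}_{\rm HT,\alg}:Gr^{\nu}_F\underline{\CH}^r_{\alg}(X_K;\Q)\times Gr^{\nu}_F\underline{\CH}^{d-r+\nu}_{\alg}(X_K;\Q)\to \R,
\]
and since the N\'eron--Tate pairing is the restriction of Beilinson's height pairing, this is literally the restriction of $\langle\ ,\ \rangle^{\nu}_{\rm HT}$ from Theorem \ref{IIITh1}, which is the asserted ``extension of the N\'eron--Tate pairing.''

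The main obstacle, as in Theorem \ref{IIITh1}, is well-definedness: one must check that the pairing does not depend on the choice of the cycle representatives $w_1,w_2$ for the projectors $P_1,P_2$, nor on the choice of $S$ with $\ol{\Q}(S)\cong K$. For the $w_i$, the argument is the same as before: under Conjecture \ref{BBC} one has $F^2\CH^{\bullet}((S\times X)_{\ol{\Q}};\Q)=0$, so $\Xi_1^{\alg}$ and $\Xi_2^{\alg}$ are independent of the chosen representatives, two representatives differing by a cycle that kills homologically trivial classes. For independence of $S$, one dominates $S$ and $S'$ by a desingularization $S''$ of a component of $S\times S'$, exactly as in the remark preceding the statement, and uses that the rational Chow group of algebraically trivial cycles is a $\Q$-vector space together with functoriality of the N\'eron--Tate pairing under the pull-back/push-forward along the correspondences induced by $S''\to S$ and $S''\to S'$; the compatibility of $\langle\ ,\ \rangle_{\rm HT}$ with such correspondences (Theorem \ref{LBB}(iv) at the level of the spread, together with the projection formula for the height pairing) then guarantees the two transported pairings agree. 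I expect this last compatibility-of-correspondences check to be the genuinely delicate point, everything else being a routine restriction of the constructions already in place for Theorem \ref{IIITh1}.
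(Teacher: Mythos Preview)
Your proposal is correct and follows essentially the same approach as the paper: replace $\CH^{\bullet}_{\hom}$ by $\CH^{\bullet}_{\alg}$ throughout the proof of Theorem \ref{IIITh1}, obtain the subspaces $\Xi_{1,\alg}\cong Gr^{\nu}_F\underline{\CH}^r_{\alg}(X_K;\Q)$ and $\Xi_{2,\alg}\cong Gr^{\nu}_F\underline{\CH}^{d-r+\nu}_{\alg}(X_K;\Q)$, and transport the N\'eron--Tate pairing along these isomorphisms. Your discussion of well-definedness (independence of $w_i$ and of $S$) is in fact more detailed than the paper's proof, which simply says the argument ``goes exactly in the same way as Theorem \ref{IIITh1}'' and defers the independence-of-$S$ issue to the remark preceding the statement.
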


\begin{proof}
Assuming Conjecture \ref{BBC}, we get that
$$\CH^r_{\alg}((S\times X)_{\overline {\mathbb {Q}}};\mathbb {Q})\hookrightarrow J^r_{\alg}(S\times X)_{\mathbb{Q}},$$
and
$$\CH^{d-r+\nu }_{\alg}((S\times X)_{\overline {\mathbb {Q}}};\mathbb {Q})\hookrightarrow J^{d-r+\nu }_{\alg}(S\times X)_{\mathbb{Q}}.$$

The proof now goes exactly in the same way as Theorem \ref{IIITh1}, if we replace $\CH^r_{\hom}((S\times X)_{\overline {\mathbb {Q}}};\mathbb {Q})$ 
(resp.  $\CH^{d-r+\nu }_{\hom}((S\times X)_{\overline {\mathbb {Q}}};\mathbb {Q})$) with $\CH^r_{\alg}((S\times X)_{\overline {\mathbb {Q}}};\mathbb {Q})$ (resp.  $\CH^{d-r+\nu }_{\alg}((S\times X)_{\overline {\mathbb {Q}}};\mathbb {Q})$). 
We obtain $\Xi _{1,\alg}\subset \CH^r_{\alg}((S\times X)_{\overline {\mathbb {Q}}};\mathbb {Q})$ (respectively $\Xi _{2,\alg}\subset \CH^{d-r+\nu }_{\alg}((S\times X)_{\overline {\mathbb {Q}}};\mathbb {Q})$), such that
$$\Xi _{1,\alg}\cong  Gr^{\nu }_{F}\underline {\CH}^r_{\alg}(X_{K};\mathbb {Q}), \quad
\Xi _{2,\alg}\cong  Gr^{\nu }_{F}\underline {\CH}^{d-r+\nu }_{\alg}(X_{K};\mathbb {Q}).$$
The height pairing $\langle\  ,\ \rangle ^{{\nu }}_{\rm HT, {alg}}$ is now given as the pairing between $\Xi _{1,\alg}$ and $\Xi _{2,\alg}$.
\end{proof}
{\begin{rmk}\label{7R8}
The reasons for restricting to this particular subspace are the following:
\begin{enumerate}
\item{Since the height pairing for cycles algebraically equivalent to zero is given by the N\'eron-Tate pairing (\cite{Be3}, Remark 4.0.8), one can work without the assumption of Conjecture \ref{BBC}.}
\vspace{0.5cm}
\item{Further for cycles algebraically equivalent to zero, assumption (17) of \cite{Ku} is no longer necessary (\cite{Ku}, \S 8).}
\end{enumerate}
\vspace{0.3cm}
So in effect, one can freely use the machineries available from arithmetic intersection theory to compute the height pairing, albeit (GHC). We will illustrate this with an example.
\end{rmk}}

\subsection{An example computation}
Using the formalism of arithmetic intersection theory {discussed} in \S\ref{S3}, we present here a computation related to the theory developed so far. 
\begin{ex}\label{EX5}
Let $X = C_{1}\times C_{2}$ be the product of smooth projective curves $C_1$ and $C_2$, defined over $\overline{\mathbb{Q}}$. For $\nu =2$, we fix an embedding $K = \overline {\mathbb {Q}}(C_{2})\hookrightarrow \mathbb {C}$ ({so naturally $S=C_2$ following the set up of Theorem \ref{IIITh1}}), and let $p = \eta _{2}\in C_{2}(\mathbb {C})$ be a very general point corresponding to this embedding. {To be more precise, $\eta_2$ is regarded  as the generic point
of $\ol{\Q}(C_2)$ (so $\ol{\Q}(C_2) = \ol{\Q}(\eta_2)$), and recall that any point $p\in S(\C)$ for which evaluation at $p$ defines an embedding
$\ol{\Q}(\eta_2)\hookrightarrow \C$ is defined to be a very general point. Although this notation  is a bit slang, we write
$p = \eta_2$.}
We fix $e_{2}\in C_{2}(\overline {\mathbb {Q}})$. For distinct points $p_{1},q_{1},p_{2},q_{2}\in C_{1}(\overline {\mathbb {Q}})$, let

$$\xi _{1}:=(p_{1}-q_{1})\times (\eta _{2}-e_{2})\in Gr^{2}_{F}\underline{\CH}^2_{\alg}(X_{K};\mathbb {Q}) ,$$
$$\xi _{2}:=(p_{2}-q_{2})\times (\eta _{2}-e_{2})\in Gr^{2}_{F}\underline{\CH}^{2}_{\alg}(X_{K};\mathbb {Q}).$$
Assume also
\begin{equation}\label{EQ5}
N^1_{H}\left (H^1(C_{1},\mathbb {Q})\otimes H^1(C_{2},\mathbb {Q})\right )= 0.
\end{equation}
Then
$$\langle \xi _{1}, \xi _{2} \rangle ^{{2}}_{\rm HT, {alg}} = deg \left(\Delta ^{2}_{C_{2}}(1,1)\right)\langle p_{1}-q_{1}, p_{2}-q_{2}\rangle _{\rm NT},$$
where on the RHS we have N\'eron-Tate pairing.
\end{ex}
We add a remark before we begin the proof:
\begin{rmk}
The assumption in (\ref{EQ5}) holds for example if we take $X=E_1\times E_2$, a product of two non-isogenous elliptic curves; for
here $N^1_H(H^1(E_1,\mathbb{Q})\otimes H^1(E_2,\mathbb{Q}))=H^2_{alg}(E_1\times E_2,\mathbb{Q})\cap (H^1(E_1,\mathbb{Q})\otimes H^1(E_2,\mathbb{Q}))=0$
follows from the fact that any non-zero element
$$[\xi ]\in H^2_{\alg}(E_1\times E_2,\mathbb{Q})\cap (H^1(E_1,\mathbb{Q})\otimes H^1(E_2,\mathbb{Q}))$$
will in turn define an isogeny between $E_1$ and $E_2$.
\end{rmk}
Since Example \ref{EX5} illustrates the potential of our theory so effectively, we will provide a more detailed proof.
\begin{proof}
{From the assumptions of Example \ref{EX5}} we get that $S\times X=C_2\times (C_1\times C_2)\cong C_1\times (C_2\times C_2)$. 
We have the Chow-K\"unneth decomposition for smooth curves
$$\Delta _{C_{2}}(1,1) = \Delta _{C_{2}}-e_{2}\times C_{2}- C_{2}\times e_{2} .$$
Now put
$$\tilde{\xi }_1:=(p_1-q_1)\times \Delta _{C_2}(1,1)\in \CH^2_{\alg}(C_1\times (C_2\times C_2);\mathbb{Q}),$$
$$\tilde{\xi }_2:=(p_2-q_2)\times \Delta _{C_2}(1,1)\in \CH^2_{\alg}(C_1\times (C_2\times C_2);\mathbb{Q}).$$
{Using the assumption in  (\ref{EQ5}) and basic intersection theory, one can show $\tilde{\xi }_i\in \Xi _{alg}\mapsto \xi _i, i=1,2$ under the isomorphism $\Xi _{alg}\cong Gr^2_{F}\underline{\CH^2}_{alg}(X_K;\Q)$. Here $\Xi _{alg}\subset \CH^2_{\alg}(C_1\times (C_2\times C_2);\mathbb{Q})$ is the suitable subspace (see Theorem \ref{IIITh2} for details)}. Thus, the height pairing {$\langle \xi _{1}, \xi _{2} \rangle ^{{2}}_{\rm HT, {alg}}$} is given by $\langle \tilde{\xi }_1,\tilde{\xi }_2\rangle_{\rm HT}$. We provide a general computation in the formalism of arithmetic intersection theory, 
for which $\langle \tilde{\xi }_1,\tilde{\xi }_2\rangle_{\rm HT}$ is a particular case.
\begin{lemma}\label{IIILe1}
Let $C$ be smooth projective curve and $X$ be a smooth projective variety of dimension $d-1$, both
defined over a number field $k$. Let $\alpha _{1} ,\alpha _{2}  \in \CH^1_{\alg}(C;\mathbb {Q})$ and $\pi _{1}: C\times X\rightarrow C$ and $\pi _{2} : C\times X\rightarrow X$ are the projections. Given $w_{1}\in \CH^{r-1}(X;\mathbb {Q})$ and $w_{2}\in 
\CH^{(d-1)-(r-1)}(X;\mathbb {Q})= \CH^{d-r}(X;\mathbb {Q})$ and the cycles
$$a _{1} :=\pi ^{*}_{1}(\alpha _{1})\cdot \pi ^*_{2}(w_{1})\in \CH^r_{\alg}(C\times X;\mathbb {Q})$$

$$a_{2} := \pi ^{*}_{1}(\alpha _{2})\cdot \pi ^*_{2}(w_{2})\in \CH^{d-r+1}_{\alg}(C\times X;\mathbb {Q}).$$
We get the following height pairing relation :
$$\langle a_{1},a_{2}\rangle _{\rm HT} = (\deg(w_{1}\cdot w_{2})_X)\langle \alpha _{1},\alpha _{2}\rangle _{\rm NT}\hspace {0.1cm},$$
where $(w_{1}\cdot w_{2})_X$ is the usual intersection pairing on $X$.
\end{lemma}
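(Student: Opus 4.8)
The plan is to reduce the arithmetic height pairing of the product cycles $a_1 = \pi_1^*(\alpha_1)\cdot\pi_2^*(w_1)$ and $a_2 = \pi_1^*(\alpha_2)\cdot\pi_2^*(w_2)$ on $C\times X$ to the N\'eron--Tate pairing on $C$ by means of the projection formula in arithmetic intersection theory (Theorem 3.6.1 and 4.2.3 of \cite{GS}). First I would fix a regular model $\widetilde{C\times X} = \widetilde{C}\times_S \widetilde{X}$ over $S = \Spec(\mathcal{O}_k)$ (after possibly enlarging $k$, using Remark \ref{IVRm1} for $\widetilde{C}$ and the hypothesis that $\widetilde{X}$ has a regular model), together with arithmetic lifts $\widehat{\alpha}_i\in\widehat{\CH}^1(\widetilde{C};\Q)$ of $\alpha_i$, and arithmetic lifts $\widehat{w}_j\in\widehat{\CH}^{*}(\widetilde{X};\Q)$ of $w_j$. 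Since $\alpha_1,\alpha_2$ are algebraically equivalent to zero, by Remark \ref{7R8} neither Conjecture \ref{BBC} nor the extra hypothesis (17) of \cite{Ku} is needed, and the height pairing of $a_1,a_2$ is literally the arithmetic degree $\widehat{\deg}_{\widetilde{C\times X}}\big(\widehat{a}_1\cdot\widehat{a}_2\big)$ of the arithmetic intersection product of the lifts $\widehat{a}_i := \widehat{\pi}_1^*(\widehat{\alpha}_i)\cdot\widehat{\pi}_2^*(\widehat{w}_i)$.

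The main computation is then a manipulation of arithmetic cycle classes. Using multiplicativity of arithmetic pull-back and commutativity/associativity of the arithmetic intersection product (the star product of Green currents is associative and commutative up to the relations defining $\widehat{R}^{\bullet}$, see \S1 of \cite{GS}), one has
\[
\widehat{a}_1\cdot\widehat{a}_2 = \widehat{\pi}_1^*(\widehat{\alpha}_1\cdot\widehat{\alpha}_2)\cdot\widehat{\pi}_2^*(\widehat{w}_1\cdot\widehat{w}_2)
\]
in $\widehat{\CH}^{d+1}(\widetilde{C}\times_S\widetilde{X};\Q)$. Now I would push forward along $\widehat{\pi}_1:\widetilde{C}\times_S\widetilde{X}\to\widetilde{C}$. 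Since $\pi_1$ is proper with smooth generic fibre $X$, and $\widetilde{C},\widetilde{X}$ are equidimensional, the push-forward $\widehat{\pi}_{1,*}$ exists and satisfies the projection formula $\widehat{\pi}_{1,*}\big(\widehat{\pi}_1^*(\beta)\cdot\gamma\big) = \beta\cdot\widehat{\pi}_{1,*}(\gamma)$. Applying this with $\beta = \widehat{\alpha}_1\cdot\widehat{\alpha}_2\in\widehat{\CH}^2(\widetilde{C};\Q)$ and $\gamma = \widehat{\pi}_2^*(\widehat{w}_1\cdot\widehat{w}_2)$ gives
\[
\widehat{\pi}_{1,*}\big(\widehat{a}_1\cdot\widehat{a}_2\big) = (\widehat{\alpha}_1\cdot\widehat{\alpha}_2)\cdot\widehat{\pi}_{1,*}\big(\widehat{\pi}_2^*(\widehat{w}_1\cdot\widehat{w}_2)\big).
\]
The key observation is that $\widehat{\pi}_{1,*}\widehat{\pi}_2^*(\widehat{w}_1\cdot\widehat{w}_2)$ lies in $\widehat{\CH}^{0}(\widetilde{C};\Q) = \widehat{\CH}^0(\widetilde{C})\otimes\Q \cong \Q$ (a constant), and since $w_1\in\CH^{r-1}(X)$, $w_2\in\CH^{d-r}(X)$ have complementary codimension in the $(d-1)$-dimensional $X$, this constant is exactly the degree $\deg(w_1\cdot w_2)_X$ of the ordinary intersection number on $X$ — the Green-current/Archimedean contribution to this fibre integral vanishes because fibre integration of a top-degree class over $X$ against the appropriate currents reduces to the geometric degree. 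Feeding this back and taking $\widehat{\deg}_{\widetilde{C}}$ yields
\[
\langle a_1,a_2\rangle_{\rm HT} = \deg(w_1\cdot w_2)_X\cdot\widehat{\deg}_{\widetilde{C}}\big(\widehat{\alpha}_1\cdot\widehat{\alpha}_2\big) = \deg(w_1\cdot w_2)_X\cdot\langle\alpha_1,\alpha_2\rangle_{\rm NT},
\]
where the last equality is the identification of the arithmetic height pairing on divisors algebraically equivalent to zero on a curve with the N\'eron--Tate pairing (the Archimedean part being the one from \S\ref{S1}, cf. the final remark of \S\ref{S3}).

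The step I expect to be the main obstacle is justifying that the fibre push-forward $\widehat{\pi}_{1,*}\widehat{\pi}_2^*(\widehat{w}_1\cdot\widehat{w}_2)$ really is the geometric intersection number with no spurious Archimedean correction: one must check that the star product of the Green currents chosen for $\widehat{w}_1$ and $\widehat{w}_2$, when integrated over the fibres $X_\sigma(\C)$ for the infinite places $\sigma$, contributes nothing beyond $\deg(w_1\cdot w_2)$. This is essentially the compatibility of arithmetic push-forward with the classical intersection number on the smooth fibre, which follows from the functoriality of $\widehat{\pi}_{1,*}$ on $\widehat{\CH}$ and the fact that on a smooth projective variety the arithmetic degree of a zero-cycle recovers its ordinary degree up to a contribution that dies when the cycle is rationally trivial in low codimension; I would cite \S2 of \cite{BGS} and Theorem 3.6.1 of \cite{GS} for the precise statements, and note that for the intended application ($C_2\times(C_1\times C_2)$, with $w_i$ a point on $C_1$ and $\Delta_{C_2}(1,1)$) everything is elementary.
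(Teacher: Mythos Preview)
Your overall strategy---reduce to the projection formula in arithmetic intersection theory and identify the fibre push-forward with $\deg(w_1\cdot w_2)_X$---is exactly the paper's approach. The manipulation
\[
\widehat{a}_1\cdot\widehat{a}_2 = \widehat{\pi}_1^*(\widehat{\alpha}_1\cdot\widehat{\alpha}_2)\cdot\widehat{\pi}_2^*(\widehat{w}_1\cdot\widehat{w}_2)
\]
followed by $\widehat{\pi}_{1,*}$ and the projection formula is the correct skeleton, and your final identification with the N\'eron--Tate pairing is right.

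However, there is a genuine gap at the very first step. You write ``fix a regular model $\widetilde{C\times X} = \widetilde{C}\times_S\widetilde{X}$'' and invoke ``the hypothesis that $\widetilde{X}$ has a regular model''. Neither is available. First, the lemma does \emph{not} assume $X$ has a regular model; as Remark~\ref{IVRm1} stresses, the existence of a regular model is highly non-trivial and is only known in special cases. Second, even if both $\widetilde{C}$ and $\widetilde{X}$ were regular over $S=\Spec(\mathcal{O}_k)$, the fibre product $\widetilde{C}\times_S\widetilde{X}$ is typically \emph{not} regular: regularity is not stable under fibre products over a non-field base, and both factors will generally have non-smooth special fibres. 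So the arithmetic Chow machinery of \cite{GS} does not apply directly to your product.

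The paper repairs this by invoking de~Jong's alterations \cite{dJo} twice: once to produce a regular $\widetilde{X}$ dominating some flat projective model of $X$ (over a finite extension of $k$), and once more to produce a regular $Z$ with a finite dominant map $f:Z\to\widetilde{C}\times_S\widetilde{X}$. The computation is then carried out on $Z$ via the composites $f_{\widetilde{C}}=\pi_{\widetilde{C}}\circ f$ and $f_{\widetilde{X}}=\pi_{\widetilde{X}}\circ f$, with the caveat that everything holds only up to rational multiples (harmless, since the pairing is $\Q$-valued). The paper also takes care, via Hriljac \cite{Hr}, to choose lifts $Z_i$ of $\alpha_i$ on $\widetilde{C}$ orthogonal to all vertical cycles and with harmonic Green currents ($dd^c g_i+\delta_{Z_i}=0$), which is what makes $\widehat{\deg}_{\widetilde{C}}([(Z_1,g_1)]\cdot[(Z_2,g_2)])$ literally equal to $\langle\alpha_1,\alpha_2\rangle_{\rm NT}$ rather than merely related to it. Once these regularity and normalization issues are handled, your projection-formula computation goes through verbatim.
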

\begin{proof}
Let $\widetilde {C}$ be a regular semi-stable model for $C$ over {$\Spec(\mathcal{O}_{k'})$} (after a finite extension $k'$ of the ground field $k$). Choose $Z_i,\hspace {0.1cm}i=1,2$ cycles on $\widetilde {C}$ of codimension $1$ such that
\begin{enumerate}
\item{$Z_i|_{C} =\alpha _i$.}
\item{$Z_i\cdot V=0$ for any vertical cycle $V$.}
\end{enumerate} 
One can arrange the above situation by Th. 1.3 of \cite{Hr}. Choose $g_i,\hspace {0.1cm}i=1,2$, Green's functions for $Z_i$ such that $dd^cg_i+\delta _{{Z _i}}=0$ ({since $\alpha _i$ is null-homologous, the cohomology class $[\omega _{Z_i}]=0$}). We have
$$[(Z_i,g_i)]\in \widehat {\CH}^1(\widetilde {C}),\hspace {0.1cm}i=1,2\hspace {0.1cm}.$$
Then,
$$\langle \alpha _1,\alpha _2\rangle _{\rm NT} = \widehat{deg}_{\widetilde {C}}\left([(Z_1,g_1)]\cdot [(Z_2,g_2)]\right)\in \widehat {\CH}^1(\Spec(\mathbb {Z}))\otimes \Q\cong \mathbb {R},$$
is independent of the choices of $Z_i,g_i$.\\

Now, for any projective and flat model $\widetilde{X}'$ over {$\Spec(\mathcal{O}_{k'})$} of $X$, by de Jong's alteration (\cite {dJo}, Theorem 8.2) we get a projective, flat and regular scheme $\widetilde {X}$ {over a finite extension of $k'$ (in turn a finite extension of $k$)}, with a finite and surjective morphism to $\widetilde{X}'$. In particular $\dim(\widetilde{X}')=\dim(\widetilde {X})$. Let $W_i,\hspace {0.1cm}i=1,2$ be cycles on $\widetilde {X}$ of codimensions $r-1$ and $d-r$ respectively such that 
$$W_i|_{X}=w_i,\hspace {0.2cm}i=1,2\hspace {0.1cm}.$$
Let $g_{W_{1}}$ (resp. $g_{W_2}$) be {a Green current} for $W_1$ (resp. $W_2$). Then 
$$[(W_1,g_{W_1})]\in \widehat{\CH}^{r-1}(\widetilde {X})$$
$$[(W_2,g_{W_2})]\in \widehat{\CH}^{d-r}(\widetilde {X})\hspace {0.1cm}.$$
For the scheme {$\widetilde {C}\times _{\Spec(\mathcal{O}_{k'})}\widetilde {X}$, we can use the alteration trick once more to obtain a regular flat and projective scheme {$Z$ over $\Spec(\mathcal{O}_{k^{''}})$, where $k^{''}$ is a finite extension of $k$} and a dominant and finite morphism $f : Z\rightarrow \widetilde {C}\times _{\Spec(\mathcal{O}_{k'})}\widetilde {X}$. In particular $\dim(Z)=\dim (\widetilde {C}\times _{\Spec(\mathcal{O}_{k'})}\widetilde {X})=d+1$}. For the projections
$$\pi _{\widetilde {C}} : \widetilde {C}\times _{\Spec(\mathcal{O}_{k'})}\widetilde {X}\rightarrow \widetilde {C}$$
$$\pi _{\widetilde {X}} : \widetilde {C}\times _{\Spec(\mathcal{O}_{k'})}\widetilde {X}\rightarrow \widetilde {X}\hspace {0.1cm},$$
consider
$$f_{\widetilde {C}} := \pi _{\widetilde {C}}\circ f$$
$$f_{\widetilde {X}} := \pi _{\widetilde {X}}\circ f\hspace {0.1cm}\hspace {0.1cm},$$
and the cycles
$$\tilde {a}_1:=f^*_{\widetilde {C}}([(Z_{1},g_{Z_{1}})]){\cdot }f ^*_{\widetilde {X}}([(W_{1},g_{W_{1}})])$$
$$\tilde {a}_2:=f^*_{\widetilde {C}}([(Z_{2},g_{Z_{2}})]){\cdot }f^*_{\widetilde {X}}([(W_{2},g_{W_{2}})])\hspace {0.1cm}.$$
Then ({up to rational multiples, which will arise since we are using alterations and extensions of the base field $k$})
$$\langle a_1,a_2\rangle _{\rm HT} = \widehat{deg}_{Z}\left(\tilde {a}_1\cdot \tilde {a}_2\right)\in \widehat {\CH}^1(\Spec(\mathbb{Z}))\otimes \mathbb{Q}\cong \mathbb {R}\hspace {0.1cm}.$$
Since $f^*_{\widetilde {C}}$ and $f^*_{\widetilde {X}}$ are morphisms of rings (\cite {GS}, 4.4.3 (5)),
$$\tilde {a}_1\cdot \tilde {a}_2 = f^*_{\widetilde {C}}\left([(Z_1,g_1)]\cdot [(Z_2,g_2)]\right)\cdot f^*_{\widetilde {X}}\left([(W_1,g_{W_1})]\cdot [(W_2,g_{W_2})]\right)\hspace {0.1cm}.$$
By the projection formula for arithmetic intersection pairing (\cite {GS}, 4.4.3 (7))
$$f_{\widetilde {C},*}\left(\tilde {a}_1\cdot \tilde {a}_2\right) = \underbrace {[(Z_1,g_1)]\cdot [(Z_2,g_2)]}_{\in \widehat {\CH}^2(\widetilde {C};\Q)}\cdot \underbrace {f_{\widetilde {C},*}\left[f^*_{\widetilde {X}}\left([(W_1,g_{W_1})]\cdot [(W_2,g_{W_2})]\right)\right]}_{\in 
\widehat {\CH}^{0}(\widetilde {C};\Q))}.$$
Since
$$\widehat{deg}_{Z}\left(\tilde {a}_1\cdot \tilde {a}_2\right) = \widehat{deg}_{\widetilde {C}}\left(f_{\widetilde {C},*}(\tilde {a}_1\cdot \tilde {a}_2)\right)$$
and
$$f_{\widetilde {C},*}\left[f^*_{\widetilde {X}}\left([(W_1,g_{W_1})]\cdot [(W_2,g_{W_2})]\right)\right]=\deg(w_1\cdot w_2)_X,$$
we obtain our desired result. {We note here that since we are using $\Q$-valued intersection pairing, the relations among various height-pairings won't change.}
\end{proof}
Quite generally, one can also prove the following:

\begin{thm}[\cite{S-G}]\label{8C1}
Given smooth projective curves ${C_{1},\dots ,C_{d}}$ over $\overline {\mathbb {Q}}$,  let $X = C_{1}\times \cdots \times C_{d}$. For $\nu \geqq 2$, we fix an embedding $K = \overline {\mathbb {Q}}(C_{2}\times \cdots \times C_{\nu })\hookrightarrow \mathbb {C}$, and let $p = (\eta _{2},\cdots ,\eta _{\nu })\in C_{2}(\mathbb {C})\times \cdots \times C_{\nu }(\mathbb {C})$ be a very general point corresponding to this embedding {(see Example
\ref{EX5} for a clarification of ``general'').} We fix $e_{j}\in C_{j}(\overline {\mathbb {Q}})$, $j=2,\cdots ,d$. For distinct points $p_{1},q_{1},p_{2},q_{2}\in C_{1}(\overline {\mathbb {Q}})$ and $\nu \leqq r\leqq d$, let
\[
\xi _{1}:=Pr^{*}_{1,\cdots ,\nu }((p_{1}-q_{1})\times (\eta _{2}-e_{2})\times \cdots \times (\eta _{\nu }-e_{\nu}))\bigcap 
\]
\[
Pr^{*}_{\nu +1,\cdots ,r}(e_{\nu +1},\cdots ,e_{r})\in Gr^{\nu }_{F}{\underline{\CH}^r_{alg}(X_{K};\mathbb {Q})},
\]
\[
\xi _{2}:=Pr^{*}_{1,\cdots ,\nu }((p_{2}-q_{2})\times (\eta _{2}-e_{2})\times \cdots \times (\eta _{\nu }-e_{\nu }))\bigcap
\]
\[
 Pr^{*}_{r+1,\cdots ,d}(e_{r+1},\cdots ,e_{d})\in {Gr^{\nu }_{F}\underline{\CH}^{d-r+\nu }_{alg}(X_{K};\mathbb {Q})}.
\]
Assume also 
\[
N^1_{H}\left (H^1(C_{1},\mathbb {Q})\otimes \cdots \otimes H^1(C_{\nu },\mathbb {Q})\right)  = 0,
\]
and
\[
N^1_{\overline{\Q}}\left (H^1(C_{2},\mathbb {Q})\otimes \cdots \otimes H^1(C_{\nu },\mathbb {Q})\right)=0 .
\]
Then, ${\langle \xi _{1}, \xi _{2} \rangle ^{\nu }_{\rm HT, alg}} =  \left(\prod^{\nu }_{j=2} [deg (\Delta ^{2}_{C_{j}}(1,1))_{C_{j}\times C_{j}}] \right)\langle p_{1}-q_{1}, p_{2}-q_{2}\rangle _{\rm NT}$,\\
where $\langle\ ,\  \rangle _{\rm NT}$ is the N\'eron-Tate pairing on $(J^{1}(C_{1})(\overline {\mathbb {Q}}))\otimes \mathbb {Q}$.
\end{thm}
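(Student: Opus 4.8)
The plan is to reduce, exactly as in Example~\ref{EX5}, the pairing $\langle\ ,\ \rangle^{\nu}_{\rm HT,alg}$ on $X_K$ to an arithmetic intersection on an $\ol{\Q}$-spread and then invoke Lemma~\ref{IIILe1}. Following Theorem~\ref{IIITh1} I take $S=C_2\times\cdots\times C_\nu$, a smooth projective $\ol{\Q}$-variety of dimension $\nu-1$ with $\ol{\Q}(S)=K$, so that $\rho$ is $Pr_S: S\times X\to S$. After re-ordering factors,
\[
S\times X\;\cong\;C_1\times(C_2\times C_2)\times\cdots\times(C_\nu\times C_\nu)\times C_{\nu+1}\times\cdots\times C_d,
\]
the first copy in each pair $C_j\times C_j$ being the $S$-factor. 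Using the Chow-K\"unneth idempotent $\Delta_{C_j}(1,1)=\Delta_{C_j}-e_j\times C_j-C_j\times e_j$, the spreads I would take are
\[
\tilde\xi_1:=(p_1-q_1)\times\Delta_{C_2}(1,1)\times\cdots\times\Delta_{C_\nu}(1,1)\times\{e_{\nu+1}\}\times\cdots\times\{e_r\}\times C_{r+1}\times\cdots\times C_d,
\]
\[
\tilde\xi_2:=(p_2-q_2)\times\Delta_{C_2}(1,1)\times\cdots\times\Delta_{C_\nu}(1,1)\times C_{\nu+1}\times\cdots\times C_r\times\{e_{r+1}\}\times\cdots\times\{e_d\},
\]
of codimension $r$ and $d-r+\nu$ on $S\times X$; since $p_i-q_i$ is algebraically equivalent to zero on the curve $C_1$, each $\tilde\xi_i$ is algebraically equivalent to zero on $(S\times X)_{\ol{\Q}}$.

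Next I would check the two properties that identify these with $\xi_1,\xi_2$. Restricting to the generic fibre $X_K$ amounts to evaluating the $S$-coordinates at the very general point $(\eta_2,\dots,\eta_\nu)$; since $(e_j\times C_j)|_{\{\eta_j\}\times C_j}=0$ while $\Delta_{C_j}|_{\{\eta_j\}\times C_j}=\{\eta_j\}$ and $(C_j\times e_j)|_{\{\eta_j\}\times C_j}=\{e_j\}$, one gets $\Delta_{C_j}(1,1)|_{\{\eta_j\}\times C_j}=\eta_j-e_j$, whence $\tilde\xi_i|_{X_K}=\xi_i$ on the nose. The harder point is that $\tilde\xi_i\in\Xi_{i,\alg}$, i.e.\ that its Abel-Jacobi class lies in the summand $J(H_0)$ of $J\big(H^{2r-1}(S\times X,\Q(r))\big)$ cut out by the projector $w_1$ (resp.\ $w_2$). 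Since $\tilde\xi_1=(p_1-q_1)\times\zeta_1$ with $\zeta_1$ a product of $\Delta_{C_j}(1,1)$'s, point classes and fundamental classes, $[\zeta_1]$ lies in a single K\"unneth component and $\Phi_r(\tilde\xi_1)$ — the external product of the Abel-Jacobi class of $p_1-q_1$ with $[\zeta_1]$ — lies in $J$ of
\[
\Bigl(\textstyle\bigotimes_{j=2}^{\nu}H^1(C_j)\Bigr)\otimes\Bigl(H^1(C_1)\otimes\textstyle\bigotimes_{j=2}^{\nu}H^1(C_j)\otimes\textstyle\bigotimes_{j=\nu+1}^{r}H^2(C_j)\otimes\textstyle\bigotimes_{j=r+1}^{d}H^0(C_j)\Bigr)(r).
\]
Here the hypothesis $N^1_{\ol{\Q}}\big(H^1(C_2)\otimes\cdots\otimes H^1(C_\nu)\big)=0$ identifies the first factor with $H^{\nu-1}(S)/N^1_{\ol{\Q}}H^{\nu-1}(S)$ (the other K\"unneth pieces of $H^{\nu-1}(S)$ carry an $H^2(C_j)$, hence have coniveau $\geq1$), while the hypothesis $N^1_{H}\big(H^1(C_1)\otimes\cdots\otimes H^1(C_\nu)\big)=0$ makes the second factor a direct summand of $H^{2r-\nu}(X)/N^{r-\nu+1}_{H}H^{2r-\nu}(X)$ — it has coniveau exactly $r-\nu$ and no sub-Hodge-structure of higher coniveau, precisely because its $H^1(C_1)\otimes\cdots\otimes H^1(C_\nu)$ part has none of coniveau $1$. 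Thus this is a direct summand of $H_0$; by Conjecture~\ref{BBC} the Abel-Jacobi map on $\CH^r_{\hom}((S\times X)_{\ol{\Q}};\Q)$ is injective, so $\tilde\xi_1$ is forced to be the element of $\Xi_{1,\alg}$ corresponding to $\xi_1$ under $\Xi_{1,\alg}\cong Gr^{\nu}_F\ul{\CH}^r_{\alg}(X_K;\Q)$; replacing $H^{2r-\nu}(X)$ by $H^{2d-2r+\nu}(X)$ handles $\tilde\xi_2$. I expect this K\"unneth-summand bookkeeping — the one place where both coniveau hypotheses are used — to be the main obstacle.

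With that in hand, by the construction of $\langle\ ,\ \rangle^{\nu}_{\rm HT,alg}$ in Theorem~\ref{IIITh2}, and because on algebraically trivial cycles Beilinson's pairing on $(S\times X)_{\ol{\Q}}$ is the N\'eron-Tate pairing, $\langle\xi_1,\xi_2\rangle^{\nu}_{\rm HT,alg}=\langle\tilde\xi_1,\tilde\xi_2\rangle_{\rm HT}$. Setting $R:=(C_2\times C_2)\times\cdots\times(C_\nu\times C_\nu)\times C_{\nu+1}\times\cdots\times C_d$ and writing $\tilde\xi_i=\pi_{C_1}^*(p_i-q_i)\cdot\pi_R^*(\zeta_i)$, this is exactly the setting of Lemma~\ref{IIILe1} (over a number field of definition, the $\ol{\Q}$-pairing being compatible with the number-field one), so
\[
\langle\tilde\xi_1,\tilde\xi_2\rangle_{\rm HT}=\big(\deg(\zeta_1\cdot\zeta_2)_R\big)\,\langle p_1-q_1,p_2-q_2\rangle_{\rm NT}.
\]
Finally $\zeta_1\cdot\zeta_2$ splits over the factors of $R$: on each $C_j\times C_j$ with $2\leq j\leq\nu$ it is $\Delta_{C_j}(1,1)\cdot\Delta_{C_j}(1,1)$, of degree $\deg(\Delta^2_{C_j}(1,1))_{C_j\times C_j}$; on $C_j$ with $\nu+1\leq j\leq r$ it is $\{e_j\}\cdot C_j$, of degree $1$; and on $C_j$ with $r+1\leq j\leq d$ it is $C_j\cdot\{e_j\}$, of degree $1$. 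Hence $\deg(\zeta_1\cdot\zeta_2)_R=\prod_{j=2}^{\nu}\deg(\Delta^2_{C_j}(1,1))_{C_j\times C_j}$, which yields the asserted formula (as in Lemma~\ref{IIILe1} and Example~\ref{EX5}, the rational multiples coming from alterations and base extensions do not affect this $\Q$-linear relation).
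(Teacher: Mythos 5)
Your plan is correct and matches the approach the paper itself takes: Theorem~\ref{8C1} is stated in the paper as the ``quite general'' version of Example~\ref{EX5}, proved by the same recipe (spread $\xi_i$ to $\tilde\xi_i$ on $S\times X$ using the Chow--K\"unneth idempotents $\Delta_{C_j}(1,1)$, check via the two coniveau hypotheses that $\tilde\xi_i$ lands in $\Xi_{i,\alg}$, then apply Lemma~\ref{IIILe1} with $C=C_1$ and $X$ replaced by the remaining factor $R$). Your codimension bookkeeping and the computation $\deg(\zeta_1\cdot\zeta_2)_R=\prod_{j=2}^{\nu}\deg(\Delta^2_{C_j}(1,1))_{C_j\times C_j}$ are correct, and you rightly flag the K\"unneth/coniveau identification $\tilde\xi_i\in\Xi_{i,\alg}$ as the point where both hypotheses enter, which the paper also treats tersely.
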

\end{proof}
\begin{rmk}
For a self-product of a CM-elliptic curve (\cite{S-G}, \S 8.2), we were able to eliminate the  assumption 
in (\ref{EQ5}) altogether.
\end{rmk}

\section{An Archimedean pairing involving the equivalence relation defining higher Chow groups}\label{S6}

In this section, and for each $m\geq 0$, we construct a pairing
on the cycle level, involving the equivalence relation
in the definition of Bloch's higher Chow groups $\CH^r(X,m)$ defined below.
The case when $m=0$ has already been defined in \S\ref{S1}, and
the nature of this pairing is more akin to the Archimedean height
pairing defined in the literature. It was first discussed in \cite{C-L}; however
presentation here is intended to be more user friendly.
A general construction of this pairing for all
$m$ is in order. We first recall that two subvarieties $V_1,\ V_2$
of a given variety  intersect properly if codim$\{V_1\cap V_2\} \geq$ codim $V_1 \ +$
codim $V_2$. This notion naturally extends to algebraic cycles.

\medskip
\noindent
(i) {\it Higher Chow groups.}\ Let $W/k$ be a 
quasi-projective variety over a field $k$. Put
$z^r(W) =$ free abelian group generated by subvarieties of
codimension $r$ in $W$, 
\[
\Delta^m := \Spec\biggl(\frac{k[t_0,...,t_m]}{1-\sum_{j=0}^mt_j}\biggr)
\]
 the standard $m$-simplex, and
$z^r(W,m) = \big\{\xi\in z^k(W\times \Delta^m)\ \big|$ $\xi$ meets
all faces $\{j_1=\cdots=j_\ell=0\ |\  \ell=1,...,m\}$ properly$\big\}$.

\begin{defn} [\cite{Blo1}]
${\CH}^{\bullet}(W,\bullet) =$ homology of $\big\{z^{\bullet}
(W,\bullet),\partial\big\}$.
 We put ${\rm CH}^k(W) := {\rm CH}^k(W,0)$. 
 \end{defn}
 
\noindent
(ii) {\it Cubical version.}\ Let $\square^m:= ( \PP^1 \backslash
\{1\})^m $ with coordinates $z_i$ and $2^{m}$ codimension one faces
obtained by setting $z_i=0,\infty$, and boundary maps
$\partial = \sum (-1)^{i-1}(\partial_{i}^{0}-\partial_{i}^{\infty})$,
where $\partial_{i}^{0},\ \partial_{i}^{\infty}$ denote
the restriction maps to the faces $z_{i}=0,\ z_{i}=\infty$ 
respectively. The rest of the definition is completely
analogous for $z^r(W,m) \subset z^r(W\times \square^m)$,
except that one has to quotient out by the
subgroup $z^r_{\dgt}(W,m) \subset z^r(W,m)$ of degenerate cycles
obtained via pullbacks $\sum_{j=1}^m\pr_j^{\ast} : z^r(W,m-1) \to z^r(W,m)$,
$\pr_j : W\times \square^m \to W\times \square^{m-1}$ the
$j$-th canonical projection. 
It is known that both complexes are quasi-isomorphic (Bloch (unpublished)/Levine \cite{Lv}; independently).

\subsection{A quick detour via Milnor $K$-theory}  
An excellent reference for this part is \cite{B-T}.
Let $\FF$ be a field with multiplicative group $\FF^{\times}\subset \FF$. 
Consider the graded  tensor algebra
\[
T(\FF) := \bigoplus_{r=0}^{\infty}\{\FF^{\times}\}^{\otimes_{\Z}m} = \Z \oplus \FF^{\times}\oplus \cdots,
\]
and let $R(\FF)$ be the graded $2$-sided ideal generated by
\[
\big\{\tau \otimes (1-\tau)\ \big|\ \ \tau \in \FF^{\times}\bs \{1\}\big\}.
\]
Recall that the Milnor $K$-theory of $\FF$ is given by
\[
K^M_{\bullet}(\FF)  := T(\FF)/R(\FF) = \bigoplus_{r=0}^{\infty}K_m^M(\FF).
\]
Further, recall that
 $K_r^M(\FF) \simeq \CH^r(\Spec(\FF),r)$, (Nesterenko/Suslin (1990), Totaro (1992)).
 Now let $W/k$ be a smooth scheme over a field $k$.
 If one replaces $\FF$ by $\CO_W^{\times}$, then we arrive at the sheaf $\K^M_{r,W}$ of 
 Milnor $K$-groups.  {To be more precise,
 let ${\CO}_{W}$ be the sheaf of regular functions on $X$,
with sheaf of units ${\CO}_{W}^{\times}$. As in \cite{Ka}, we
put
$$
{\K}_{r,W}^{M} :=  \big(\underbrace{{\CO}_{W}^{\times}\otimes 
\cdots\otimes {\CO}_{W}^{\times}}_{r\ {\rm times}}\big)\big/
{\J},\quad
{\rm (Milnor \ sheaf)},
$$
where $\J$ is the subsheaf of the tensor product
generated by sections of the form:
$$
\big\{\tau_{1}\otimes\cdots\otimes \tau_{r}\ \big|\
\tau_{i} +\tau_{j} = 1,\quad{\rm for\ some}\ i\ {\rm and}\ j,\ i\ne j
\big\}.
$$
For example, ${\K}_{1,W}^{M} = {\CO}^{\times}_{X}$.}
The higher Chow
groups $\CH^r(W,m)$ come naturally equipped with
a coniveau filtration involving codimension of cycles when projected to $W$, whose 
graded pieces can be computed via a local-to-global
spectral sequence (\cite{BO}, \cite{Blo1}), involving flasque resolutions of certain sheaves. 
Via the works of Elbaz-Vincent$/$M\"uller-Stach (1998), and Gabber (1992),
 (see \cite{MS2}, together with \cite{Ke}), one of those sheaves is
$\K^M_{r,W}$. This, together with partial degeneration of the aforementioned spectral sequence leads to:

\begin{thm}[See \cite{MS2}]  For $0\leq m\leq 2$, there is an isomorphism
\[
H^{r-m}_{\Zar}(W,\K^M_{r,W}) \simeq \CH^r(W,m).
\]
\end{thm}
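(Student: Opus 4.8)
The plan is to establish the isomorphism $H^{r-m}_{\Zar}(W,\K^M_{r,W}) \simeq \CH^r(W,m)$ for $0\leq m\leq 2$ by analyzing the coniveau (niveau) spectral sequence associated to Bloch's higher Chow complex, following \cite{Blo1}, \cite{BO}, and \cite{MS2}. First I would recall that the higher Chow groups $\CH^r(W,\bullet)$ arise as the hypercohomology (or global sections cohomology) of a complex of Zariski sheaves $\underline{z}^r(-,\bullet)$ on $W$ obtained by sheafifying $U \mapsto z^r(U,\bullet)$; Bloch's localization theorem guarantees that this presheaf complex has good descent properties, so that there is a spectral sequence
\[
E_1^{p,q} = \bigcoprod_{x\in W^{(p)}} \CH^{r-p}(k(x), -q-p) \Longrightarrow \CH^r(W,-p-q),
\]
the niveau spectral sequence, where $W^{(p)}$ denotes points of codimension $p$. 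The key input is the Gersten-type resolution: the rows of the $E_1$-page, read as complexes of Zariski sheaves, provide flasque resolutions of the sheaves $\h^q_{\Zar}$ associated to the higher Chow presheaves, and one identifies the relevant cohomology sheaf.

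Second, the crucial identification is that the Zariski sheaf $\h^{-r}$ (the sheaf associated to $U\mapsto\CH^r(U,r)$, equivalently the sheafification of $U\mapsto K^M_r(\CO(U))$ once one invokes the Nesterenko--Suslin--Totaro isomorphism $K^M_r(\FF)\simeq\CH^r(\Spec\FF,r)$ at each stalk) is precisely the Milnor sheaf $\K^M_{r,W}$. This is exactly the content attributed to Elbaz-Vincent--M\"uller-Stach and Gabber: the Gersten resolution for Milnor $K$-theory is exact on a smooth scheme, and it coincides term-by-term with the relevant row of the niveau spectral sequence, so $H^p_{\Zar}(W,\K^M_{r,W})$ computes the cohomology of that row. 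Then I would feed this into the spectral sequence: the row $q=-r$ contributes $E_2^{p,-r} = H^p_{\Zar}(W,\K^M_{r,W})$ converging to $\CH^r(W,r-p)$, i.e.\ setting $p=r-m$ gives a map $H^{r-m}_{\Zar}(W,\K^M_{r,W}) \to \CH^r(W,m)$ (the edge map, composed with the projection from the associated graded).

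Third, to upgrade this edge map to an isomorphism for $m\leq 2$ one needs the spectral sequence to degenerate in the relevant range, i.e.\ one must show the only contribution to $\CH^r(W,m)$ for $m\leq 2$ comes from the single row $q=-r$. This is the partial degeneration alluded to in the statement: the rows above (those contributing $\CH^{r-p}(k(x),j)$ with $j \leq 1$) vanish or contribute trivially because $\CH^s(\Spec F, j) = K^M_?\ $ is concentrated appropriately — specifically $\CH^s(\Spec F,0)=0$ for $s>0$, $\CH^s(\Spec F,1)=0$ for $s\neq 1$ (where it is $F^\times$), and $\CH^s(\Spec F,2)$ enjoys analogous low-degree vanishing, so the differentials in and out of the $(r-m,-r)$ spot either have zero source or zero target for $m\leq 2$. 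Combined with the Gersten resolution being a genuine resolution (no higher cohomology sheaves interfering) this forces $E_2 = E_\infty$ along that diagonal and yields the isomorphism. I expect the main obstacle to be the bookkeeping in this degeneration argument: one must carefully check, simplex-by-simplex, that for $m=2$ no differential from a neighbouring entry of the $E_1$ or $E_2$ page can hit or emanate from $E^{r-2,-r}$, which requires knowing enough about $\CH^\bullet(\Spec F,1)$ and $\CH^\bullet(\Spec F,2)$ for function fields $F$ — precisely the range where the theory is still well-behaved and beyond which ($m\geq 3$) the statement is expected to fail. All of these ingredients are assembled in \cite{MS2} (with \cite{Ke}), and I would cite that for the detailed verification rather than reproduce it.
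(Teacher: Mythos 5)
Your proposal is correct and follows exactly the strategy the paper sketches before citing \cite{MS2}: the coniveau (niveau) spectral sequence for higher Chow groups, the identification of the relevant row with the Gersten complex of the Milnor sheaf $\K^M_{r,W}$ (Elbaz-Vincent/M\"uller-Stach, Gabber, Kerz, plus Nesterenko--Suslin--Totaro for the stalkwise identification), and the partial degeneration forced by the low-degree vanishing of $\CH^s(\Spec F, j)$ for $j\le 2$ and $s\neq j$. The paper itself does not reproduce a proof but delegates it to \cite{MS2} with precisely this outline, so your fleshed-out version is on target; the only cosmetic issue is that the $E_1$-page terms should be written with $\bigoplus$ rather than a coproduct symbol.
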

In the context of Milnor $K$-theory, the last 3 terms of the flasque resolution of $\K^M_{r,W}$ are
\[
 \bigoplus_{{\rm cd}_WV=r-2}K^M_2(\C(V))
\xrightarrow{\rm Tame}
\bigoplus_{{\rm cd}_WV=r-1}K_1^M(\C(V)) \xrightarrow{\Div} \bigoplus_{{\rm cd}_WV=r}K_0^M(\C(V)).
\]

If we interpret this in terms of global sections, this leads to a complex
whose last three terms and corresponding homologies (norm/graph maps, indicated at $\updownarrow$)
for $0\leq m\leq 2$ are:
\begin{equation}\label{E345}
\begin{matrix} \bigoplus_{\text{\rm cd}_WZ=r-2}
K^M_2({\C}(Z))&{\buildrel T\over \to}&\bigoplus_{\text{\rm cd}_WZ=r-1}
{\C}(Z)^\times&{\buildrel \text{\rm div}\over\to}& 
\bigoplus_{\text{\rm cd}_WZ=r} {\Z}\\
&\\
\updownarrow&&\updownarrow&&\updownarrow\\
&\cr
{\CH}^{r}(W,2)&&{\CH}^{r}(W,1)&&{\CH}^{r}(W,0)\end{matrix}
\end{equation}
where as a reminder,  div is the divisor map of zeros minus poles
of a rational function, and $T$ is the Tame symbol map.
Again as a reminder, the  Tame symbol map
$$
T:\bigoplus_{\text{\rm cd}_XZ = r-2}{K}_{2}^M({\C}(Z))\to
\bigoplus_{\text{\rm cd}_XD = r-1}{K}_{1}^M({\C}(D)),
$$
is defined as follows. First ${K}_{2}^M({\C}(Z))$ is
generated by symbols $\{f,g\}$, $f,\ g\in {\C}(Z)^{\times}$,
under $f\otimes g \mapsto \{f,g\}$.

\bigskip
For $f,\ g \in {\C}(Z)^{\times}$, 
$$
T\big(\{f,g\}\big) = \sum_D(-1)^{\nu_D(f)\nu_D(g)}\biggl({f^{\nu_D(g)}
\over g^{\nu_D(f)}}\biggr)_D,
$$
where $\big(\cdots\big)_D$ means restriction to
the generic point of $D$, and $\nu_D$ represents order of a 
zero or pole along an irreducible divisor $D\subset Z$.

\begin{ex}{\rm  Taking cohomologies of the complex in (\ref{E345}), we
have:

\bigskip
(i)  $\CH^{r}(W,0) = z^r(W)/z^r_{\rat}(W) =: \CH^r(W)$.

\medskip
(ii) ${\CH}^{r}(W,1)$ is represented by classes of the form
$\xi = \sum_{j}(f_{j},D_{j})$, where codim$_{X}D_{j}=r-1$,
$f_{j}\in {\C}(D_{j})^{\times}$, and $\sum \text{\rm div}(f_{j}) = 0$;
 modulo the image of the Tame symbol.
\medskip

(iii) ${\CH}^{r}(W,2)$ is represented by classes in the kernel
of the Tame symbol; modulo the image of a higher Tame symbol.}
\end{ex}

\bigskip

In this section we will adopt the cubical version of $\CH^{\bullet}(W,\bullet)$,
albeit a simplicial version can also be arranged \cite{KLL}.
The intersection product for cycles in the cubical version, is easy
to define. On the level of cycles, and in $W\times W\times \square^{m+n}$,
one has
\[ 
z^r(W,m) \times z^k(W,n) \to z^{r+k}(W\times W,m+n);
\]
however the pullback along the diagonal
\[
z^{r+k}(W\times W,m+n) \to  z^{r+k}(W,m+n),
\]
is not well-defined, even for smooth $W$. In
particular, for smooth $W$, the issue of
when an intersection product is
defined, which is a general position statement
involving proper intersections, has to be addressed since we will
be working on the level of cycles.
On the level of
Chow groups, a moving lemma of Bloch (adapted to the cubical
situation) guarantees a pullback for smooth $W$:
\[
\CH^{\bullet}(W\times W,\bullet) \to \CH^{\bullet}(W,\bullet),
\]
and hence an intersection product for smooth $W$. 

\bigskip
Let us return to the situation where $X$ be a projective algebraic manifold of dimension $d$, and let
$z^r_{\rat}(X,m) := \partial\big(z^r(X,m+1)\big)\subset z^r(X,m)$ be the 
equivalence relation subgroup defining the higher
Chow groups $\CH^r(X,m)$. As in \cite{KLM}, we will need to restrict ourselves to
those precycles\footnote{As a reminder, for $\xi\in z^r(X,\bullet)$ to be a cycle, 
we require $\del \xi=0$.}  $z^r(X,\bullet)$  that are in general position with respect to the real 
subsets $X\times [-\infty,0]^{\bullet}$, and we will denote this by
$z^{\bullet}_{\R}(X,\bullet)$.
Now introduce
\[
\Lambda^0(r,m,X) = 
\]
\[
\biggl\{(\xi_1,\xi_2)\in z^r_{\R,\rat}(X,m) \times z^{d-r+m+1}_{\R,\rat}(X,m)\ \biggl|\
\begin{matrix} |\xi_1|\cap |\xi_2| = \emptyset\\
{\rm in}\ X\times \square^m\end{matrix} \biggr\},
\]
\[
\Lambda^+(r,m,X) = \biggl\{ (\xi_1,\xi_2) \in \Lambda^0(r,m,X) \ \biggl|\
\begin{matrix} \xi_1 = \partial\xi_1', \ \text{\rm where}\\
 \xi_1'\cap \xi_2\ \text{\rm is\ defined}\\
\text{\rm in}\ z_{\R}^{d+m+1}(X,2m+1)\end{matrix}\biggr\},
\]
\[
\Lambda^-(r,m,X) = \biggl\{ (\xi_1,\xi_2) \in \Lambda^0(r,m,X) \ \biggl|\
\begin{matrix} \xi_2 = \partial\xi_2', \ \text{\rm where}\\
 \xi_1\cap \xi_2'\ \text{\rm is\ defined}\\
\text{\rm in}\ z_{\R}^{d+m+1}(X,2m+1)\end{matrix}\biggr\},
\]
\[
\Lambda(r,m,X) \subset \Lambda^+(r,m,X) \bigcap \Lambda^-(r,m,X),
\]
is characterized by the requirement that $\xi_1'\cap\xi_2'$ is defined,
viz,
\[
\xi_1'\cap\xi_2' \in z_{\R}^{d+m+1}(X,2m+2).
\]

\begin{thm}\label{MTA}
There are natural pairings
\[
\langle \ ,\ \rangle^+_m :  \Lambda^+(r,m,X) \to \C/\Z(1),
\]
\[
\langle \ ,\ \rangle^-_m :  \Lambda^-(r,m,X) \to \C/\Z(1),
\]
which satisfy the following:
\medskip

{\rm (i)} (Reciprocity) On $\Lambda(r,m,X)$, $\langle\  ,\ \rangle^+_m = (-1)^m\langle\ ,\ \rangle_m^-$.

\medskip

{\rm (ii)} (Bilinearity) If $(\xi_1^{(1)},\xi_2),\ (\xi_1^{(2)},\xi_2)
 \in \Lambda^+(r,m,X)$, then
\[
\langle \xi_1^{(1)}+\xi_1^{(2)},\xi_2\rangle^+_m = 
\langle \xi_1^{(1)},\xi_2\rangle^+_m +
\langle \xi_1^{(2)},\xi_2\rangle^+_m.
\]
If $(\xi_1,\xi_2^{(1)}),\ (\xi_1,\xi_2^{(2)})\in \Lambda^-(r,m,X)$, then
\[
\langle\xi_1,\xi_2^{(1)}+\xi_2^{(2)}\rangle^-_m = 
\langle \xi_1,\xi_2^{(1)}\rangle^-_m +
\langle \xi_1,\xi_2^{(2)}\rangle^-_m.
\]

{\rm (iii)} (Projection formula)  Let $\pi: X\to Y$ be a flat surjective morphism between two smooth
projective varieties, with $\dim X=d$. Then
$\langle \xi_1, \pi^* \xi_2\rangle_m^{\pm} = \langle \pi_* \xi_1,
\xi_2\rangle_m^{\pm}$ for all $\xi_1\in z_{\rat}^r(X,m)$ and
$\xi_2 \in z_{\rat}^{d-r+m+1}(Y,m)$ with $(\pi_* \xi_1,
\xi_2)\in \Lambda^{\pm}(r+s-d,m,Y)$, where $s := \dim Y$.
\end{thm}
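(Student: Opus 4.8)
The plan is to construct the two pairings $\langle\ ,\ \rangle_m^\pm$ directly on the level of cycles as currents, mimicking the $m=0$ construction of \S\ref{S1} but now using the Archimedean regulator currents attached to higher Chow precycles in the cubical complex. Concretely, for $(\xi_1,\xi_2)\in\Lambda^+(r,m,X)$ with $\xi_1=\partial\xi_1'$ where $\xi_1'\in z_{\R}^r(X,m+1)$, I would set $\langle\xi_1,\xi_2\rangle_m^+$ to be the integral over $\xi_1'\cap\xi_2$ of the appropriate Milnor-regulator kernel form (the $R_{r,1}$-type current of \cite{KLM}, a combination of $\log|z_i|$ and $\log|f_j|$ factors wedged with $d\log$'s along the simplex/cube), reduced modulo $\Z(1)$. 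The role of the general-position hypotheses packaged in $z_\R^{\bullet}(X,\bullet)$ is precisely to guarantee that all the relevant currents are of type $L^1$ and that the intersection $\xi_1'\cap\xi_2$ lives in $z_\R^{d+m+1}(X,2m+1)$ as demanded in the definition of $\Lambda^+$, so the integral converges and is well defined. The $\C/\Z(1)$ target (rather than $\R$) appears because at level $m\geq 1$ the regulator is genuinely complex-valued and only the ambiguity by periods $\Z(1)=2\pi i\Z$ can be killed; this is the natural home already visible in the $\CH^r(X,1)\to\C/\Z(1)$ Beilinson regulator.

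The key steps, in order, would be: first, recall the explicit KLM-type current $R$ representing the regulator class of a precycle in $z_\R^r(X,m+1)$, and check that pairing it (by integration) against $\xi_2\in z_{\R,\rat}^{d-r+m+1}(X,m)$ gives a number in $\C$ depending only on $\xi_1'$ and $\xi_2$; second, establish \emph{independence of the choice of $\xi_1'$}: if $\partial(\xi_1'-\xi_1'')=0$ then $\xi_1'-\xi_1''$ is (modulo degenerate cycles and a Tame-symbol boundary) a higher-Chow \emph{cycle}, and since $\xi_2\sim_{\rat}0$ its regulator pairing against a cycle vanishes modulo $\Z(1)$ — this is the higher analogue of the one-line argument given after Proposition~\ref{PKK} and after the displayed Proposition in \S\ref{S1}; third, prove bilinearity (ii), which is immediate from linearity of integration and of $\partial$ once well-definedness is in place; fourth, prove the projection formula (iii) by the current-level projection formula $\pi_*(\pi^*\alpha\wedge\beta)=\alpha\wedge\pi_*\beta$ for the regulator kernels together with compatibility of $\partial$ with $\pi_*$ and $\pi^*$ — again a direct generalization of Proposition~\ref{PKK}'s analogue, using flatness of $\pi$ to control the cubical coordinates; and fifth, prove reciprocity (i) on $\Lambda(r,m,X)$: here $\xi_1'\cap\xi_2'\in z_\R^{d+m+1}(X,2m+2)$ is available, and applying $\partial$ (Stokes / the current identity $d[R]=[\text{kernel}]-[\text{regulator of }\partial]$) to the regulator current of $\xi_1'\cap\xi_2'$ yields, after separating the $m$ cube-coordinates coming from $\xi_1'$ from the $m$ coming from $\xi_2'$, the two expressions $\langle\partial\xi_1',\xi_2'\rangle$ and $\langle\xi_1',\partial\xi_2'\rangle$ with a sign $(-1)^m$ produced by permuting the two blocks of cubical factors past each other. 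This last computation is modeled on the Weil-reciprocity argument in the proof of Proposition~\ref{PKK}, with Weil reciprocity $\prod_p T_p\{f,g\}=1$ replaced by its higher-$K$-theory incarnation (compatibility of the Tame symbol with the regulator).

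I expect the reciprocity statement (i), and more precisely the sign bookkeeping $(-1)^m$ together with the verification that the ``error terms'' truly vanish modulo $\Z(1)$, to be the main obstacle. Several subtleties compound there: one must track the orientation conventions on $\square^{2m+2}=\square^{m+1}\times\square^{m+1}$ versus the interleaved face maps $\partial=\sum(-1)^{i-1}(\partial_i^0-\partial_i^\infty)$; one must know that the degenerate-cycle subgroup $z_{\dgt}^\bullet$ and the image of the (higher) Tame symbol contribute nothing to the pairing, which requires a separate vanishing lemma (degenerate cycles pull back from a lower cube, so the kernel form restricts to zero, much as in the argument that $\CH^r(U\times X;\Q)\to 0$ in $H^{2r}$ for $\dim U$ small in \S\ref{S5}); and one must confirm that the boundary currents supported on $X\times\partial\square^\bullet$ and on the codimension-one faces either cancel in pairs or land in $\Z(1)$ because the relevant restricted cycles are again rationally equivalent to zero. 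A secondary technical nuisance is ensuring that all the intersections and restrictions used in the Stokes argument remain inside the general-position classes $z_\R^\bullet(X,\bullet)$; this is exactly what the definition of $\Lambda(r,m,X)$ as a \emph{subset} (rather than the full intersection $\Lambda^+\cap\Lambda^-$) is designed to finesse, and invoking Bloch's moving lemma (\cite{Blo1}, Lemma 4.2) in the cubical form as in the proof of Proposition~\ref{PKK} should let one reduce to the generic situation where the currents are honest locally integrable forms and Stokes applies without distributional corrections.
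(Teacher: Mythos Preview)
Your proposal is correct and follows essentially the same route as the paper: the pairing is $\langle\xi_1,\xi_2\rangle_m^+ := R_{2m+1}(\xi_1'\cup\xi_2)$ in terms of the KLM regulator current (which the paper expands via the product identity $R_{\ell+n}(\alpha\cup\beta)=(-2\pi\I)^\ell T_\ell(\alpha)\cdot R_n(\beta)+R_\ell(\alpha)\cdot\Omega_n(\beta)$), well-definedness is precisely your nullhomology argument phrased as a vanishing cup product in $H_{\D}^{2d+1}(X,\Z(d+m+1))\simeq\C/\Z(1)$, and reciprocity comes from the graded Leibniz rule applied to $\partial(\xi_1'\cup\xi_2')$. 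The reciprocity step is in fact cleaner than you anticipate: the only ``error terms'' are the $\Omega_{2m+2}$- and $T_{2m+2}$-components of the Deligne triple attached to $\xi_1'\cup\xi_2'$, and these vanish (respectively lie in $\Z(2m+2)$) by a one-line dimension count on the $(m{+}1)$-dimensional cycle $\xi_1'\cup\xi_2'$, so no separate moving lemma, Tame-symbol compatibility, or degenerate-cycle analysis is needed beyond the general-position hypotheses already built into $\Lambda(r,m,X)$.
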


\begin{proof} We first recall the definition of Deligne
cohomology. Good sources for this are \cite{Ki}, \cite{Ja1} and \cite{KLM}. Let $\D_X^{\bullet}$ be the (fine) sheaf of 
complex-valued currents acting on $C^{\infty}$ complex-valued
compactly supported $(2d-\bullet)$-forms, where we recall $\dim X=d$. One has 
a decomposition into Hodge type:
\[
\D_X^{\bullet} = \bigoplus_{p+q=\bullet}\D_X^{p,q},
\]
where $\D_X^{p,q}$ acts on $(d-p,d-q)$ forms, with Hodge
filtration,
\[
F^r\D_X^{\bullet} = \bigoplus_{p+q=\bullet,p\geq r}\D_X^{p,q}.
\]
Likewise, for a subring $\bA\subseteq \C$, there is the (soft)\footnote{In the end, acyclicity is all that matters here.} sheaf subcomplex $\CC^{\bullet}_X(\bA) \subset \D_X^{\bullet}$
of $\bA$-coefficient Borel-Moore chains on $X$. The
global sections of a given sheaf ${\mathcal S}$ over $X$ will be
denoted by ${\mathcal S}(X)$.  Next, for a morphism
of complexes $\lambda : A^{\bullet} \to C^{\bullet}$, we recall the cone complex:
\[
\text{\rm Cone}(A^{\bullet} {\buildrel \lambda\over\longrightarrow} B^{\bullet}) = A^{\bullet}[1]
\oplus B^{\bullet},
\]
with differential
\[
\delta_D : A^{q+1}\oplus B^q \to A^{q+2}\oplus B^{q+1},
\quad (a,b) \ {\buildrel \delta_D\over \mapsto} \ (-da,\lambda(a) + db).
\]

\begin{defn} Fix a subring $\bA\subseteq \R$. The Deligne cohomology of $X$ is
given by
\[
H^i_{\D}(X,\bA(j)) := 
\]
\[
H^i\big(\text{\rm Cone}\big(\CC^{\bullet}_{X}(X,\bA(j)) \bigoplus F^j\D_X^{\bullet}(X) 
\xrightarrow{\varepsilon-l}\D^{\bullet}_{X}(X)\big)[-1]\big).
\]
\end{defn}
It is customary of thinking of currents as associated to homology.
Note that by simply regarding $\CC^{\bullet}_{X}(\bA(j))$, $F^j\D_X^{\bullet}$ as acyclic resolutions of
the respective sheaves $\bA(r)$ and $\Omega^j_{X,{\rm d}-{\rm closed}}$,  with
quasi-isomomorphisms, $\{\bA(j) \to 0\to \cdots\} \approx \CC^{\bullet}_{X}(\bA(j))$,
$\Omega_X^{\bullet\geq j} \approx F^j\D_X^{\bullet}$,
the above definition, when compared with the one in \cite{EV},
already incorporates Poincar\'e duality.

\begin{rmk} Generally speaking, one thinks of currents as well behaved under proper push-forwards, albeit with
no defined pull-back. However, the rules can be broken here if one replaces the sheaf complex of currents
on a given manifold with another which is quasi-isomorphic and having better properties with respect to pull-backs and
multiplication. The situation is well documented in \cite{Ki}(\S4) and \cite{K-L}(\S8). The reader should keep
this in mind in the discussion below. To simplify our notation, we will use the notation ``$\cdot$'' to refer to
multiplication of currents. Also we use the principal branch of the $\log$ function below.
\end{rmk}

Continuing with the proof of Theorem \ref{MTA},
we now recall the description of the  regulator on the level of complexes \cite{KLM}.
\[
\cl_{r,m,X} : \CH^r(X,m) \to H_{\D}^{2r-m}(X,\Z(r)),\ {\rm viz.,}\ \bA = \Z.
\]
Consider $\square^m$ with affine coordinates $(z_1,...,z_m)$
and introduce the currents: ($\delta_V$ means integration over $V$)
\[
\Omega_m := \biggl( \bigwedge_{j=1}^md\log z_j\biggr)\cdot \delta_{\square^m},
\]
\[
 T_{z_1} = \delta_{[-\infty,0]\times\square^{m-1}},...,T_m := T_{z_1}\cap\cdots\cap T_{z_m} = \delta_{[-\infty,0]^m} :=\int_{[-\infty,0]^m}(-),
\]
\[
R_m := \log z_1d\log z_2\wedge\cdots d\log z_m\cdot \delta_{\square^m}
\]
\[
- (2\pi\I)\log z_2 d\log z_3\wedge\cdots\wedge d\log z_m \cdot T_{z_1}
+\cdots
\]
\[
+ \ (-1)^{m-1}(2\pi\I)^{m-1}\log z_m \cdot {T_{z_1}\cap\cdots\cap T_{z_{m-1}}}.
\]
 For  $\xi\in z_{\R}^r(X,m)$, and if we let ${\rm pr}_{\square}: X\time\square^m\to \square^m$, ${\rm pr}_X: X\times\square^m\to X$ 
 be  the obvious projections, 
we consider the currents on $X$:
\[
T_m(\xi) := \int_{\xi}\pr^{\ast}_{\square}(T_m)\wedge \pr_X^{\ast}(-),
\]
\[
\Omega_m(\xi) := \int_{\xi}\pr^{\ast}_{\square}(\Omega_m)\wedge \pr_X^{\ast}(-),
\]
\[
R_m(\xi) := \int_{\xi}\pr^{\ast}_{\square}(R_m)\wedge \pr_X^{\ast}(-).
\]
One has the following identities \cite{KLM}:
\begin{equation}\label{E111}
\partial T_{m}(\xi) = T_{m-1}(\partial \xi), \ d[\Omega_m(\xi)] = 2\pi\I\Omega_{m-1}(\partial\xi),
\end{equation}
\[
d[R_m(\xi)] = \Omega_m(\xi) - (2\pi\I)^m{T_m(\xi)} - 2\pi\I R_{m-1}(\partial\xi).
\]

The map $\cl_{r,m,X}$ is induced (up to the normalizing twist $(2\pi\I)^{r-m}$) by
\begin{equation}\label{E777}
\xi\in z^r(X,m) \mapsto \big((2\pi\I)^mT_m(\xi),\Omega_m(\xi),R_m(\xi)\big),
\end{equation}
with the following caveat. One expects a quasi-isomorphism $z^r_{\R}(X,\bullet) \approx z^r(X,\bullet)$,
which certainly holds after tensoring with $\Q$ \cite{K-L}. Having said this,
by the very definition of $\Lambda^{\pm}$, we can drop the $\Q$-coefficients from this
discussion without compromising the theorem.
It is easy to check that 
\[
\big((2\pi\I)^mT_m(\xi),\Omega_m(\xi),R_m(\xi)\big) = (0,0,0) \ \text{\rm for}\ \xi\in z_{\dgt}^r(X,m).
\]
For $m=0$, note that $ (T_0(\xi),\Omega_0(\xi),(2\pi\I)^m R_0(\xi)) = (\xi,\delta_{\xi},0)$.
First an observation. For precycles $\alpha\in z^p(X,\ell)$ and $\beta\in
z^q(X,n)$ (in general position), one has
the relation \cite{KLM}:
\begin{equation}\label{SSE}
R_{\ell +n}(\alpha\cup \beta) = (-2\pi\I)^{\ell}{T_{\ell}}(\alpha) \cdot R_n(\beta) 
+ R_{\ell}(\alpha) \cdot \Omega_n(\beta).
\end{equation}

\subsection{The pairings}\label{SS008}
For $(\xi_1,\xi_2)\in \Lambda^+(r,m,X)$, we put
\begin{equation}\label{E666}
\langle\xi_1,\xi_2\rangle_m^+:= (-2\pi\I)^{m+1}{T_{m+1}(\xi_1{'})} \cdot R_m(\xi_2)  +
R_{m+1}(\xi_1{'}) \cdot \Omega_m(\xi_2)
 \end{equation}
 \[
 \in  \C/\Z(2m+1) \simeq \C/\Z(1),
\]
where the latter $\simeq$ is given by multiplication by $(-4\pi^2)^{-m}$,
and for $(\xi_1,\xi_2)\in \Lambda^-(r,m,X)$, we put (under  $\C/\Z(2m+1) \simeq \C/\Z(1)$),
\[
\langle\xi_1,\xi_2\rangle_m^- := (-2\pi\I)^{m}{T_{m}(\xi_1)} \cdot R_{m+1}(\xi_2') 
+ R_{m}(\xi_1) \cdot  \Omega_{m+1}(\xi_2') \in  \C/\Z(1).
\]
Note that for dimension and general position reasons alone,
\begin{equation}\label{E663}
T_{m+1}(\xi_1')\cdot T_m(\xi_2) = 0 = T_{m}(\xi_1)\cdot T_{m+1}(\xi_2') \in z_{\R}^{d+m+1}(X,2m+1),
\end{equation}
and likewise over $|\xi_1\cap \xi_2'|$ or $|\xi_1'\cap \xi_2|$,
\begin{equation}\label{E661}
\Omega_{m+1}(\xi_1') = 0 =  \Omega_{m+1}(\xi_2'),
\end{equation}
using the fact that $\dim |\xi_1\cap \xi_2'|, \ \dim |\xi_1'\cap \xi_2|
\leq m$ and that $\Omega_{m+1}(\xi_1')$, $\Omega_{m+1}(\xi_2')$
are meromorphic currents involving $m+1$ holomorphic differentials.
This, together with 
\[
R_{m}(\xi_1) \wedge \Omega_{m+1}(\xi_2') = \pr^{\ast}_{\square}(R_m\wedge \Omega_{m+1})\cdot
\delta_{\xi_1\cap \xi_2'},\ \text{\rm (by\ fiberwise\ Fubini)},
\]
implies (using (\ref{SSE})),  the simpler expression:
\[
\langle\xi_1,\xi_2\rangle_m^- := (-2\pi\I)^{m}{T_{m}(\xi_1)} \cdot R_{m+1}(\xi_2').
\]
Furthermore, the vanishing relations in (\ref{E663}) and (\ref{E661}) 
imply that the pairings $\langle \xi_1,\xi_2\rangle_m^{\pm}$ correspond
(up to twist) to $(\ast)$ in a Deligne complex triple of the form $(0,0,\ast)$,
(see the RHS of (\ref{E777})).
Note that if either $\partial\xi_1' = 0$ or $\partial \xi_2' = 0$,
then the pairings $\langle\xi_1,\xi_2\rangle_m^{\pm}$ amount to a cup product 
in Deligne cohomology of the regulator of 
a higher Chow cycle, together with one which is 
nullhomologous (in Deligne cohomology),
which is zero in:
\[
H_{\mathcal D}^{2d+1}(X,\Z(d+m+1)) \simeq \C/\Z(1),
\]
where firstly after incorporating the normalizing twist (just preceding (\ref{E777})), and
in our setting, we arrive at the isomorphisms:
\[
H_{\mathcal D}^{2d+1}(X,\Z(d+m+1)) \simeq  \C/\Z(d+m+1) {\buildrel \times(2\pi\I)^{d{+}m}\over \simeq} \C/\Z(1).
\]
Hence the pairings $\langle\xi_1,\xi_2\rangle_m^{\pm}$
do not depend on the choices of the $\xi_j'$'s.  
For simplicity, we will assume given
$(\xi_1,\xi_2)\in \Lambda(r,m,X)$. By definition,
this  implies that 
 \[
 \xi_1'\cap\xi_2,\
 \xi_1\cap\xi_2' \in z_{\R}^{d+m+1}(X,2m+1),\   \xi_1'\cap\xi_2'\in 
 z_{\R}^{d+m+1}(X,2m+2),
 \]
which is important in ensuring that the currents above are defined. 
Next, the relations
\[  
R_{2m+1}(\partial\{\xi_1'\cup \xi_2'\}) = R_{2m+1}(\xi_1\cup \xi_2') +
(-1)^{m+1}R_{2m+1}(\xi_1'\cup \xi_2),
\]
imply that
\begin{equation}\label{E522}
\langle\xi_1,\xi_2\rangle_m^+ = (-1)^m\langle\xi_1,\xi_2\rangle_m^-.
\end{equation}
We remark in passing that in the case $m=0$, and after taking real parts,
equation (\ref{E522}) implies the reciprocity result in Proposition \ref{PKK}.
The remaining claims in  Theorem \ref{MTA} are  left to
the reader.

 \end{proof}
 
 \begin{rmk} We can pass to a real-valued
 height pairing using the
 composite 
 $\C/\Z(1) \to \C/\R(1) \simeq \R$.
  \end{rmk}
 
We put
 \[
\langle\ ,\ \rangle_m := \langle\ ,\ \rangle_m^+\big|_{\Lambda(r,m,X)} = 
(-1)^m\langle\ ,\ \rangle_m^-\big|_{\Lambda(r,m,X)}.
\]
We will denote by $\langle\ ,\ \rangle_m^{\R}$ the corresponding
real pairing.

\bigskip

In the case $m=0$, we have $\Lambda^0(r,0,X) \subset z^r_{\rat}(X)\times
z^{d-r+1}_{\rat}(X)$. Let $(\xi_1,\xi_2)\in  z^r_{\rat}(X)\times
z^{d-r+1}_{\rat}(X)$. By considering the cases where $y = |\xi_j|$, $j=1,2$,
and regarding the real pairing $\langle\ ,\ \rangle_0^{\R}$ below,
we may assume that the domain is given by
\[
\Lambda := \big\{(\xi_1,\xi_2)\in z^r_{\rat}(X)\times
z^{d-r+1}_{\rat}(X)\ \big|\ |\xi_1|\cap |\xi_2| = \emptyset\big\}.
\]
and thus we have  pairings
\[
 \langle\ ,\ \rangle_0 : \Lambda^0(r,0,X) \to \C/\Z(1),
 \]
\[
\langle \ ,\ \rangle^{\R} := \langle\ ,\ \rangle^{\R}_0 : \Lambda  \to
\R,
\]
Let $\xi_1 := \DIV(f,D) \in z^r_{\rat}(X,0)$,
 $\xi_2 := \DIV(g,E)\in z_{\rat}^{d-r+1}(X,0)$ be given. In this
 case $D$ and $E$ are irreducible subvarieties of $X$
 of codim$_XD = r-1$ and codim$_XE = d-r$, and $f\in \C(D)^{\times}$,
 $g\in \C(E)^{\times}$. Then
 \[
  \langle\xi_1,\xi_2\rangle_0 = \int_{\{D\backslash f^{-1}[-\infty,0]\}\cap \xi_2}\log f  \in \C/\Z(1).
 \]
 Similarly,
 \[
 \langle\xi_1,\xi_2\rangle^{\R}_0 = \int_{D\cap \xi_2}\log|f| . 
 \]

\begin{rmk} It is instructive to work out
the case $m=1$.  Let 
 \[
 \xi_1 := \sum_j (g_j,D_j)\in z_{\rat}^{r}(X,1),
 \]
 and  
 \[
 \xi_2 := T(\{f_1,f_2\},E) \in z_{\rat}^{d-r+2}(X,1),
 \]
  be given, where
 $T$ is the Tame symbol. (We will also be working under the
 assumption that $(\xi_1,\xi_2)\in \Lambda(r,1,X)$.) In this
 case $E$ and $D_j$ are irreducible subvarieties of $X$
 of codim$_XD_j = r-1$ and codim$_XE_j = d-r$, and $f_i\in \C(E)^{\times}$,
 $i=1,2$, and  $g_j\in \C(D_j)^{\times}$. Set $C_j = D_j\cap E$, which is a curve
 in $X$, and put 
 \[
 g_{j,C_j} := g\big|_{C_j},\quad f_{i,C_j} = f_i\big|_{C_j}.
 \]
 Then using the identification $\langle \xi_1,\xi_2\rangle_1 = -
 \langle \xi_1,\xi_2\rangle_1^-$, a simple computation yields:
 \[
 \langle \xi_1,\xi_2\rangle_1 = \sum_j\biggl[(2\pi\I)\int_{g_{j,C_j}^{-1}[-\infty,0])}\log f_1 d\log f_2
 \]
 \[
 -\ (2\pi\I)^2\int_{(f_{1,C_j}\times g_{j,C_j})^{-1}[-\infty,0]^2}\log f_2\biggr]\  \in \C/\Z(3)
 \simeq \C/\Z(1).
 \]
 (Recall in equation (\ref{E666}) the identification
 $\C/\Z(2m+1) \simeq \C/\Z(1)$, which
 explains the need for the identification  $\C/\Z(3) \simeq \C/\Z(1)$
 in the case $m=1$.) Let $\gamma$ be the (closed) curve given by
 \[
 \gamma := \sum_jg_{j,C_j}^{-1}[-\infty,0].
 \]
 Taking the real part of $\langle \xi_1,\xi_2\rangle_1$
 and applying a Stokes' theorem argument, one
can show that:
\begin{equation}\label{234}
 \langle \xi_1,\xi_2\rangle_1^{\R} = -2\pi
\int_{\gamma}\big[\log|f_1|d\arg f_2  - \log|f_2|d\arg f_1\big].
 \end{equation}
 Equation  (\ref{234}) is easily seen to be non-trivial. [Take for example $X := E = \PP^2\ni [z_0,z_1,z_2]$,
 and consider $\PP^1 = \ell_j:= D_j := V(z_j)$, $g_0 = -z_1/z_2,\ g_1 = -z_2/z_0, \ g_2 = -z_0/z_1$. 
 Note that $g_j \in {\C(\ell_j)^{\times}}$ and that $\sum_{j=0}^2{\rm div}_{\ell_j} (g_j) = 0$.
 Put $L := z_0+z_1+z_2$, and $t_j = z_j/L$, hence $t_0+t_1+t_2=1$. 
 Thus in affine coordinates, $g_0 = -t_1/t_2,\ g_1 = -t_2/t_0,\ g_2 = -t_0/t_1$,
 and $\ell_j = V(t_j)$.
 Now let $\gamma$ be the corresponding $1$-cycle, which is the boundary of
the real simplex  $\{t_0+t_1+t_2  = 1\ |\ t_j\in [0,1]\}$. Let $\nu : \PP^2 \to \PP^1$
be the projection from $[0,0,1]$ (explicit: $\nu([z_0,z_1,z_2]) = [z_0,z_1])$). Then
$\nu(t_0,t_1,t_2) = (t_0,t_1)$ and the aforementioned real simplex becomes
$\{t_0+t_1\leq 1\ |\ t_j\in [0,1]\}$, and $\nu_*(\gamma)$ the
obvious boundary. Consider $p$ in the interior of $\nu_*(\gamma)$,
and in terms of the coordinate $w = t_0+\I t_1$, $t_j\in\R$, set 
$h(w) = w-p$. Observe that  $\int_{\nu_*(\gamma)}d\log h \ne 0$. Now put
\[
f_2 := \nu^*(h(w)) = \frac{z_0+\I z_1 +p\cdot L}{L},
\]
and  choose $f_1 \in \R^{\times}\bs \{\pm1\}$.]
\end{rmk}
\vspace{0.3cm}
\textbf{Acknowledgement:} The authors would like to thank Vincent Maillot for suggesting the idea
of a height pairing on graded pieces of a Bloch-Beilinson filtration, and to
Jos\'e Burgos Gil for providing us with the idea of the proof of Lemma \ref{IIILe1}. We are
also grateful from  correspondence with  Klaus K\"unnemann. A warm thanks goes to 
Lizhen Ji for his superb logistics in bringing this conference to fruition, and of course, to our esteemed colleague
Steven Zucker, for making this wonderful event possible.

\end{document}